\documentclass{llncs}

\usepackage{amssymb,stmaryrd,mathrsfs,dsfont,mathtools}

\usepackage{enumitem}
\usepackage[ruled,linesnumbered]{algorithm2e}
\usepackage{algpseudocode}
\usepackage{tikz}
\tikzstyle{vertex}=[circle, draw, inner sep=0pt, minimum size=2.5pt, fill=black]
\newcommand{\vertex}{\node[vertex]}
 
\usetikzlibrary{decorations.markings}
\usetikzlibrary{decorations.pathmorphing}

\begin{document}

\frontmatter         

\pagestyle{headings}  

\mainmatter             

\title{Word-Representation of Melon Graphs}

\titlerunning{Word-Representation of Melon Graphs}

\author{Khyodeno Mozhui \and K. V. Krishna}

\authorrunning{Khyodeno Mozhui \and K. V. Krishna}

\institute{Department of Mathematics\\ Indian Institute of Technology Guwahati, India\\
	\email{k.mozhui@iitg.ac.in};
	\email{kvk@iitg.ac.in}}

\maketitle         

\begin{abstract}
	The notion of word-representable graphs is a generalization of comparability graphs, in which graphs are represented by words. The complexity of word-representation of a word-representable graph is captured through representation number, whereas the corresponding concept is the permutation-representation number for comparability graphs. The graphs with the (permutation-)representation number at most two were characterized in the literature. While certain examples in the class of graphs with the (permutation-)representation number three are known, no characterization for these classes are available. In this work, we prove that the representation number of melon graphs is at most three. Further, we characterize the class of melon graphs restricted to comparability graphs and show that their permutation-representation number is also at most three. Moreover, this work characterizes the word-representable line graphs of melon graphs and establish that their representation number is at most three.
\end{abstract} 

\keywords{Word-representable graphs \and melon graphs \and line graphs \and representation number \and permutation-representation number.}

\section{Introduction and Preliminaries}
\label{intro}
Let $X$ be a finite set. A word over $X$ is a finite sequence of elements of $X$ written by juxtaposing them. The empty sequence is called the empty word and it is denoted by $\varepsilon$. A word $u$ is called a subword of a word $w$, denoted by $u \ll w$, if  $u$ is a subsequence of $w$.  If $u$ is a sequence of consecutive letters of $w$, then $u$ is called a factor. Suppose $w$ is a word over $X$ and $a, b \in X$. We say $a$ and $b$ alternate in $w$ if the subword of $w$ consisting of all occurrences of $a$ and $b$ in $w$ is of the form $abababa\cdots$ or $bababa\cdots$ of even or odd length. A word in which every letter appears exactly $k$ number of times is called a $k$-uniform word. 

A simple graph $G = (V, E)$ is called a \textit{word-representable graph} if there exists a word $w$ over its vertex set $V$ such that, for all $a,b \in V$, $\{a, b\} \in E$ if and only if $a$
and $b$ alternate in $w$. Not all graphs are word-representable and recognizing whether a graph is word-representable or not is NP-complete. The class of word-representable graphs includes several important classes of graphs, such as 3-colorable graphs, comparability graphs, and circle graphs. In \cite{kitaev08}, it was proved that if a graph is word-representable, then there are infinitely many words representing it.  Further, every word-representable graph is represented by a $k$-uniform word, for some positive integer $k$. The topic of word-representable graphs received the attention of many authors, and the literature has several contributions to the topic. For a detailed introduction to the theory of word-representable graphs, and for the notions which are used but not defined in this paper, one may refer to the monograph \cite{kitaev15mono}.

A word-representable graph is said to be $k$-word-representable if it is represented by a $k$-uniform word. The smallest $k$ such that a graph $G$ is $k$-word-representable is called the \textit{representation number} of the graph, and it is denoted by $\mathcal{R}(G)$. 
A permutationally representable graph is a word-representable graph that can be represented by a word of the form $p_1p_2\cdots p_k$ where each $p_i$ is a permutation of its vertices; in this case, the graph is called a \textit{permutationally $k$-representable graph}. It was shown in \cite{kitaev08order} that the class of permutationally representable graphs is precisely the class of comparability graphs -- the graphs that admit transitive orientation. A transitive orientation of a graph is an assignment of direction to each edge so that the adjacency relation on the vertices of the resulting directed graph is transitive. Thus, a comparability graph induces a partially ordered set (poset) based on a transitive orientation. The \textit{permutation-representation number} (in short, \textit{prn}) of a comparability graph $G$, denoted by $\mathcal{R}^p(G)$, is the smallest number $k$ such that the graph is permutationally $k$-representable. It is evident that, for a comparability graph $G$, $\mathcal{R}(G) \le \mathcal{R}^p(G)$. It can be observed that $\mathcal{R}(G) = \mathcal{R}^p(G) = 1$ if and only if $G = K_n$, where $K_n$ is the complete graph on $n$ vertices.

The class of graphs with representation number at most two is characterized as circle graphs \cite{halldorsson11}, while it is the class of permutation graphs with the \textit{prn} at most two \cite{Baker_1972}. However, there is no characterization available for graphs with representation number or \textit{prn} three. In general, it is NP-hard to determine the  representation number of a word-representable graph \cite{kitaev15mono}. It is known that the \textit{prn} of a comparability graph coincides with the dimension of its induced poset (cf. \cite{Mozhui_Krishna_2025}). Thus, for $k \ge 3$, it is NP-complete to decide whether the \textit{prn} of a comparability graph is $k$  \cite{yanna82}. Nevertheless, the representation number for some specific classes of graphs was obtained, in addition to some isolated examples. For example, it was established that the Petersen graph \cite{kitaev15mono}, prisms \cite{kitaev13}, Cartesian product of complete graphs $K_m$ and $K_2$ \cite{broere_2019}, and stacked book graphs \cite{KM_KVK_ric} have representation number at most three. 

Contributing to the class of graphs with (permutation-)representation number three, in this paper, we focus on the class of melon graphs, which are also known as generalized $\theta$-graphs or theta graphs. A \textit{melon graph} is a collection of vertex-disjoint paths with two common endpoints. We call these paths as the constituent paths of the melon graph. Following the notation given in \cite{dissaux}, a melon graph $M$ is denoted by $M = (E_1, E_2, \ldots, E_m)$, where $E_i$'s ($1 \le i \le m$) are constituent paths with common endpoints $0$ and $0'$. We consider simple graphs, and hence, there will be at most one constituent path of length one in a melon graph.
Since 1978, the melon graphs and its special classes were studied in many contexts, including chromatic uniqueness \cite{Loerinc_1978}, edge bandwidth \cite{Dennis_2000}, Tur\'{a}n number \cite{Liu_2023,Zhai_2021}, treelength \cite{dissaux}, burning number \cite{Liu_zhang_2019}, sum choice number \cite{Brause_2017,Carraher_2015}. While these graph problems are NP-hard in general, these problems were investigated for the class of melon graphs. In Section \ref{rno_melon}, we observe that the melon graphs are word-representable and prove that the representation number of a melon graph is at most three.

From the definition, it is easy to note that melon graphs are planar graphs. In \cite{Felsner_2015}, it was shown that posets induced by planar  comparability graphs have dimension at most four. Further, the posets induced by melon graphs restricted to comparability graphs are planar posets\footnote{A poset is said to be	planar if its Hasse diagram  can be drawn without edge crossings in the plane.} (see Appendix \ref{planar_poset_melon}). The problem of determining and bounding the dimension of planar posets plays a central role in the theory of posets  (cf.\cite{trotterbook}). While general bounds on the dimension of a poset in terms of its height or the number minimal elements are known (see \cite{Joret_2017,Trotter_2016}), various authors have contributed in  determining the dimension for special classes of planar posets, e.g., see \cite{biro_2021,trotter_dim}. In Section \ref{char_prn}, we characterize the class of comparability graphs within melon graphs and show that their \textit{prn} (i.e., the dimension of the corresponding planar posets) is at most three.

In \cite{kitaev_linegraphs}, it was shown that the line graph of a word-representable graph need not be word-representable, and posed an open problem to check whether there is a finite classification of forbidden subgraphs in a graph such that its line graph is word-representable. In Section \ref{line_melon}, we prove that the line graph of a melon graph is word-representable if and only if the triangular book graph with three pages is a forbidden induced subgraph. Subsequently, we show that the representation number of the line graph of a melon graph is at most three. We also establish the \textit{prn} of comparability graphs within this class.

\section{Representation Number}
\label{rno_melon}

It is evident that the complete graphs $K_2$ and $K_3$ are the only melon graphs with representation number one.  In this section, we show that the melon graphs are 3-word-representable (Theorem \ref{3-word-fg}) and classify the graphs with representation numbers two and three (see Theorem \ref{3-induced-fg}). 
In this work, we use the following theorem.

\begin{theorem}[\cite{kitaev08}] \label{subdiv}
	Let $G =(V,E)$ be a $3$-word-representable graph and $a, b \in V$. If $H$ is the graph obtained from $G$ by adding a path of length at least three connecting $a$ and $b$, then $H$ is $3$-word-representable. 
\end{theorem}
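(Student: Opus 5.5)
The plan is to work directly with a $3$-uniform word $w$ representing $G$ and to build from it a $3$-uniform word $w'$ over $V\cup\{x_1,\ldots,x_{k-1}\}$, where $a=x_0,x_1,\ldots,x_{k-1},x_k=b$ is the added path and $k\ge 3$. I will use two standard facts about uniform representants of \cite{kitaev08}, which I take as known. First, a cyclic shift of a uniform word representing a graph represents the same graph. Second, adding or deleting letters does not change the alternation status of any pair of old letters, since alternation of $c,d$ depends only on the subword over $\{c,d\}$. So the edges of $G$ are automatically preserved in $w'$. The whole task is to place the $3(k-1)$ new letters so that the new alternations are exactly the path edges.

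First, by a cyclic shift, I assume $w$ starts with $a$, say $w=a\,u_1\,a\,u_2\,a\,u_3$. I then control the new letters near the endpoints.

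\textbf{The letter $x_1$.} I place its three copies so that the subword on $\{a,x_1\}$ is $a x_1 a x_1 a x_1$, which forces $x_1\sim a$. At the same time I want some two consecutive copies of $x_1$ with no occurrence of a given old letter $c\neq a$ between them, which kills alternation with every such $c$. The concrete choice I would try first is to put $x_1$ immediately after the first $a$, immediately after the second $a$, and at the very end of the word. I would then check the possible distributions of the three $c$'s among $u_1,u_2,u_3$. If this placement fails for some $c$, I would instead move one copy of $x_1$ so that two copies of $x_1$ become adjacent up to new letters only. With no old letter between these two copies, no old letter can alternate with $x_1$. Keeping the subword on $\{a,x_1\}$ alternating then needs a local reshuffle next to one occurrence of $a$.

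\textbf{The letter $x_{k-1}$.} I do the symmetric construction around the occurrences of $b$. It is done inside the same word, so I cannot shift cyclically a second time. This is one reason to prefer the "two adjacent copies" device, since it is insensitive to where $b$ sits.

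\textbf{The interior letters.} For $x_2,\ldots,x_{k-2}$ I would use a fixed $3$-uniform representant of a path, for instance a $3$-uniform extension of the $2$-uniform path word $x_1x_2x_1x_3x_2x_4x_3\cdots$. I split it into three blocks and insert the blocks as contiguous factors next to the copies of $x_1$ and $x_{k-1}$ placed above. Inside contiguous blocks, interior letters cannot alternate with any old letter, because each has two consecutive copies within one block. The hypothesis $k\ge 3$ guarantees $x_1\neq x_{k-1}$ and $a\ne x_{k-1}$. Hence the $a$-side and $b$-side constraints concern distinct letters and do not collide. For $k=2$ the single new vertex would have to alternate with both $a$ and $b$, which is the case that is not covered.

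\textbf{Main obstacle.} I expect the hardest step to be reconciling the two ends. The blocks carrying $x_1$ are anchored to occurrences of $a$, and those carrying $x_{k-1}$ to occurrences of $b$. Meanwhile the path word forces a specific interleaving of $x_1,x_2,\ldots$ across the three blocks, so the cyclic order of blocks must be compatible with the relative order of the $a$'s and $b$'s in $w$. I would handle this by a case analysis on the pattern of the subword of $w$ on $\{a,b\}$, according to whether $a,b$ alternate and how the copies interleave, after the initial shift. In each case I would choose which copy of $a$ and which copy of $b$ receive the doubled copies of the new letters. The length $k-1\ge 2$ of the interior gives enough freedom to route the path word between the two anchoring regions.
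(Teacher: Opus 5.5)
\textbf{There is a genuine gap.} The paper quotes this theorem from Kitaev--Pyatkin without proof, so your outline has to stand on its own, and it does not. The step you flag as the main obstacle is the heart of the proof, and it is only announced (``I would handle this by a case analysis''). Two of the claims meant to make that step easy are also wrong.

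(i) If two consecutive copies of $x_1$ have no old letter between them, then $x_1$ cannot alternate with $a$ either. The usable device is to flank one chosen occurrence $a^{\ast}$ of $a$ by two copies of $x_1$ with no old letter other than $a$ between them. But then $x_1\sim a$ forces the third copy of $x_1$ into the gap of $w$ between the other two occurrences of $a$ (read cyclically), i.e.\ the gap opposite $a^{\ast}$. The same holds for $x_{k-1}$ and a chosen occurrence $b^{\ast}$ of $b$. So the device is \emph{not} insensitive to where $b$ sits.

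(ii) The two ends do collide. Suppose the new letters are grouped, as you suggest, into a block at $a^{\ast}$ and a block at $b^{\ast}$. Then the third copy of $x_1$ must lie in the $b^{\ast}$-block, and the third copy of $x_{k-1}$ in the $a^{\ast}$-block. This is consistent only if $b^{\ast}$ lies in the $a$-gap opposite $a^{\ast}$ and $a^{\ast}$ lies in the $b$-gap opposite $b^{\ast}$. Equivalently, the two arcs of the cyclic $\{a,b\}$-subword between $a^{\ast}$ and $b^{\ast}$ must each contain exactly one $a$ and one $b$. Not every pair qualifies: in the cyclic pattern $a_1a_2b_1a_3b_2b_3$ the pair $a_1,b_1$ fails. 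For $k=3$ the edge $x_1x_2$ ties the two ends together directly, so the claim that ``$x_1\neq x_{k-1}$'' keeps the constraints apart is false. Your routing of the interior letters through ``three blocks'' is likewise never specified in a checkable form.

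\textbf{What is missing, and how to supply it.} You need (a) the existence of such an antipodal pair, and (b) an explicit split of the path word compatible with it.

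For (a), let $f(i)$ be the number of $a$'s in cyclic positions $i,i+1,i+2$ of the length-$6$ cyclic $\{a,b\}$-subword. Then $f(i)+f(i+3)=3$ and $|f(i+1)-f(i)|\le 1$, so some $i$ has $f(i)=2$ and $f(i+1)=1$. For that $i$, position $i$ is an $a$, position $i+3$ is a $b$, and each of the two arcs between them consists of one $a$ and one $b$.

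For (b), set $n=k-1$ and replace $a^{\ast}$ by $x_1\,a\,x_2x_1x_3x_2\cdots x_nx_{n-1}$ and $b^{\ast}$ by $x_1x_2\cdots x_{n-2}\,x_nx_{n-1}\,b\,x_n$. For $n=2$ these are $x_1\,a\,x_2x_1$ and $x_2x_1\,b\,x_2$. This placement has three properties:
\begin{itemize}
\item Up to a cyclic shift, the new letters spell a $3$-uniform word representing the path $x_1x_2\cdots x_n$.
\item Every interior $x_j$ has two consecutive copies with no old letter between them, so it alternates with no old letter.
\item Among old letters, $x_1$ (resp.\ $x_n$) has only $a$ (resp.\ $b$) between two consecutive copies, and its third copy lies in the opposite gap.
\end{itemize}
Together with the cyclic-shift invariance you already invoke, this gives exactly the required alternations.
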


\begin{theorem}\label{3-word-fg}
	Every melon graph is 3-word-representable. 
\end{theorem}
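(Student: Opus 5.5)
Starting from a small base graph, we add the long paths one at a time via Theorem~\ref{subdiv}. The short paths (lengths $1$ and $2$) are handled by hand, since Theorem~\ref{subdiv} covers only paths of length at least three.

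Let $M=(E_1,\ldots,E_m)$ with endpoints $0$ and $0'$. Partition the constituent paths by length: at most one path of length one (the edge $00'$), some number $t\ge 0$ of paths of length two ($0\,x_j\,0'$ for $j=1,\ldots,t$), and the remaining paths of length at least three. Let $G_0$ be the melon subgraph formed by the paths of length at most two. Then $G_0$ is either $K_{2,t}$ with parts $\{0,0'\}$ and $\{x_1,\ldots,x_t\}$, or $K_{2,t}$ together with the edge $00'$, i.e.\ the book graph $K_2\vee \overline{K_t}$. If there is no short path at all, take $G_0$ to be two isolated vertices $0,0'$. This is represented by the $3$-uniform word $000\,0'0'0'$, because $0$ and $0'$ do not alternate there.

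The first step is to exhibit an explicit $3$-uniform word for $G_0$ in each case.

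For $K_{2,t}$, take
\[
w = 0\,x_1x_2\cdots x_t\,0'\,x_t\cdots x_2x_1\,0\,x_1x_2\cdots x_t\,0'\,0\,0'.
\]
Each pair $x_i,x_j$ with $i<j$ yields the subword $x_ix_jx_jx_ix_ix_j$, which is non-alternating. The letter $0$ alternates with every $x_i$, since the pattern is $0x\,x0x0$ up to the final $0$'s, so this needs checking. The letter $0'$ also alternates with every $x_i$. The pair $0,0'$ must not alternate.

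For the book graph, modify the word so that $0$ and $0'$ do alternate while keeping the other relations. An alternative is to use known facts: $K_{2,t}$ is a comparability graph of poset dimension at most $2$ (it is a permutation graph), so it is permutationally $2$-representable. Appending a third permutation preserves the graph, because any pair already non-alternating in $p_1p_2$ stays non-alternating, and alternating pairs in a concatenation of permutations stay alternating. Likewise, the book graph $K_2\vee\overline{K_t}$ is a permutation graph: $0$ and $0'$ are comparable to everything and the $x_j$ form an antichain. Hence it too is permutationally $2$- and therefore $3$-representable. This permutation-based argument is the cleanest route, and it is the one I would use, avoiding ad hoc word checking.

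The second step is induction. Add the paths of length at least three one at a time, each connecting $0$ and $0'$. By Theorem~\ref{subdiv}, each resulting graph remains $3$-word-representable, and after all the long paths are added we obtain $M$.

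The main obstacle is the base case. We need $G_0$ to be $3$-representable with both $0$ and $0'$ present even when $G_0$ is disconnected or trivial. For two isolated vertices the word $000\,0'0'0'$ works. Apart from that, the only remaining check is that padding a $2$-permutation representation to $3$ permutations is valid, which follows by repeating a permutation.
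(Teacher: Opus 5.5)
Your argument is correct and follows the paper's proof. Both build a $3$-representable base from the paths of length at most two: the paper uses $2$-uniform words and circle graphs, while you use permutational $2$-representability padded to $p_1p_2p_2$, as in Remark~\ref{3-uniform_path}. Both then attach the paths of length at least three one at a time via Theorem~\ref{subdiv}, with your base $000\,0'0'0'$ taking the place of the paper's starting long path when there are no short paths.

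Delete the displayed word for $K_{2,t}$, since it is wrong: it restricts to $0x_ix_i0x_i0$ on $\{0,x_i\}$, and it makes $0$ and $0'$ alternate. Also note that padding is valid because the third permutation repeats one of the first two; it does not work for an arbitrary appended permutation.
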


\begin{proof} 
	Suppose all the constituent paths of a melon graph $M = (E_1, \ldots, E_m)$ are of length at most two. Since $M$ is simple, there will be at most one path of length one. If $M$ has a path of length one, say $E_1$, then all other paths $E_i$ ($2 \le i \le m$) are of length two, say $E_i = \langle 0, x_i, 0' \rangle $, with intermediate vertex $x_i$. Clearly, the 2-uniform word $x_2 x_3 \ldots x_m 0 0' x_m \ldots x_3 x_2 0 0'$ represents $M$. If all paths are of length two in $M$, then $M = K_{2,m}$, a complete bipartite graph, so it is a circle graph. Note that a path of length at least three is also a circle graph.
	
	Further, it can be observed that an arbitrary melon graph can be obtained by adding paths of length at least three to a circle graph described in the previous paragraph, i.e., a path of length at least three or a melon graph in which all paths are of length at most two. Since adding a path of length at least three to a 3-word-representable graph results in a 3-word-representable graph (cf. Theorem \ref{subdiv}), we have all melon graphs are 3-word-representable. \qed  
\end{proof}	

We now classify the melon graphs based on their number of constituent paths and give their representation numbers through a series of lemmas.

Let $M = (E_1, E_2, \ldots, E_m)$ be a melon graph. If $m \le 2$, then $M$ is either a path or a cycle; in which case, if $M \ne K_2$ or $K_3$, then $\mathcal{R}(M) = 2$. 

\begin{lemma}
	For $m \ge 3$, if $m-2$ constituent paths of $M$ are of length at most two, then $\mathcal{R}(M) = 2$.   
\end{lemma}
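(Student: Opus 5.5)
Two things must be shown: $\mathcal{R}(M)\ge 2$ and $\mathcal{R}(M)\le 2$. The lower bound is immediate, since $m\ge 3$ forces $M\ne K_n$ (the endpoints $0,0'$ are nonadjacent or some internal vertex has degree two while another vertex has degree $\ge 3$), and by the remark in the introduction $\mathcal{R}=1$ holds only for complete graphs. For the upper bound I will use the characterization of \cite{halldorsson11}: graphs with representation number at most two are exactly the circle graphs. So it suffices to give an explicit $2$-uniform word, i.e.\ a chord diagram, for $M$.

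Write the short paths as follows. Possibly one path $E_1=\langle 0,0'\rangle$ of length one, and paths $\langle 0,x_i,0'\rangle$ of length two with $i\in I$. The remaining two paths are $P=\langle 0,p_1,\dots,p_s,0'\rangle$ and $Q=\langle 0,q_1,\dots,q_t,0'\rangle$ of arbitrary lengths, with $s,t\ge 0$. The union $P\cup Q$ is a cycle, and a cycle $v_1v_2\cdots v_n$ is represented by the standard $2$-uniform word $v_1v_nv_2v_1v_3v_2\cdots v_nv_{n-1}$, a chain of consecutive interleaving chords. I will write the cycle $C=P\cup Q$ in this form, starting at $0$ and traversing $P$ and then $Q$ back, so that the two occurrences of $0$ and of $0'$ are placed at controlled positions. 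Then I insert the factors $x_ix_i$ for $i\in I$ (nested, e.g.\ $x_{i_1}x_{i_2}\cdots$ before and the reverse after) into the word. Each chord $x_i$ must cross exactly the chords $0$ and $0'$ and no chord of an internal vertex of $C$.

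Geometrically this is possible. In the chord-diagram picture of a cycle, the chords of $0$ and $0'$ each have an endpoint near the other's, and the positions between consecutive letters near the $0$/$0'$ crossing region contain a segment of the circle from which a new chord can cross exactly $0$ and $0'$. Concretely, I will choose an arc between an occurrence of $0$ and an occurrence of $0'$ that contains only those two endpoints from $C$'s chords, and put one copy of each $x_i$ just before it and the other just after it. Several parallel $x_i$ chords are pairwise non-crossing if nested, which matches the $x_i$ being pairwise nonadjacent. If the edge $00'$ is present, the chords of $0$ and $0'$ must also cross; otherwise they must not. I therefore need two variants of the arrangement, or I treat the edge $00'$ as $P$ having length one.

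The main obstacle is the bookkeeping of this step: verifying in each case ($E_1$ present or not, and $s$ or $t$ equal to $0$ or $1$) that the chosen arc exists with exactly the endpoints of $0$ and $0'$ and that $0,0'$ cross or not as required. I would handle it by fixing the explicit word, for instance for $E_1$ absent:
\[0\,q_t\,p_1\,0\,p_2\,p_1\cdots 0'\,p_s\,q_t\,0'\ \text{(suitably)},\]
and placing the $x$-block in a gap where one copy of $0$ and one of $0'$ sit. Then I check alternation pairwise, which is routine once the positions are written down.
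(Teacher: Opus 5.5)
Your overall plan is the paper's: take a $2$-uniform word for the cycle $C=P\cup Q$, then nest the $x_i$ around an arc that meets the chords of $0$ and $0'$ once each. Your lower bound $\mathcal{R}(M)\ge 2$ is fine.

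\textbf{The arc claim is false in the two-long-paths case.} Everything rests on the claim that some arc contains one endpoint of $0$, one endpoint of $0'$, and no other endpoint of a chord of $C$. This fails exactly when both other paths are long ($s,t\ge 2$), because then $0$ and $0'$ are at distance at least three both ways around $C$. In the standard word $v_1v_nv_2v_1v_3v_2\cdots v_nv_{n-1}$, the cyclically consecutive pairs are $v_{j+1}v_j$, $v_jv_{j+2}$, $v_1v_n$, $v_nv_2$ and $v_{n-1}v_1$. These are always at cycle-distance one or two, so no such arc exists, whatever starting point you choose. Using another word does not help either: a cycle of length at least five is a prime circle graph, so this is its only chord diagram up to symmetry. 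The claim does hold when one of $s,t$ is at most $1$, which is why the one-long-path cases work.

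What you actually need is weaker. Between the two copies of the $x$-block there must be exactly one occurrence of $0$, one of $0'$, and zero or two occurrences of every other letter. The paper achieves this by letting the arc contain the whole block of $P$. With $X=x_{i_1}\cdots x_{i_r}$, it uses $0\,q_1\,X\,p_1\,0\,p_2p_1\cdots p_sp_{s-1}\,0'\,p_s\,X^R\,q_t\,0'\,q_{t-1}q_t\cdots q_1q_2$. Your sketched word also places $q_t$ between the two $0$'s, although $q_t$ is adjacent to $0'$, not $0$.

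\textbf{The case with edge $00'$ and two long paths is missing.} Suppose $00'$ is an edge and $P$, $Q$ both have length at least three. Your fallback of treating $00'$ as $P$ is unavailable, since $E_1$ and the two long paths would then be three special paths. The ``second variant'' cannot come from inserting letters into a word for $C$: insertion never changes whether $0$ and $0'$ alternate, and they do not alternate in any word for $C$. You need a genuinely different word for the theta graph $E_1\cup P\cup Q$. The paper's choice is
$X\,q_1\,0\,q_2q_1\cdots q_tq_{t-1}\,0'\,q_t\,X^R\,p_1\,0\,p_2p_1\cdots p_sp_{s-1}\,0'\,p_s$.
It is obtained from the word $0\,0'\,p_1\,0\,p_2p_1\cdots p_sp_{s-1}\,0'\,p_s$ of the cycle $E_1\cup P$ by replacing the factor $00'$ with the whole $Q$-block, enclosed by the $x$'s. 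Without constructions like these two, the proposal only establishes the cases with at most one path of length at least three.
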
	
\begin{proof}
	In Theorem \ref{3-word-fg}, we observed that if the length of each constituent path is at most two, then $\mathcal{R}(M) = 2$. If $M$ has one or two constituent paths of length at least three, we show the result as per the following cases:   
	\begin{itemize}
		\item  Suppose $M$ has a constituent path, say $E_1$, of length at least three and others of length two. Let $E_1 = \langle 0, a_1, a_2, \ldots, a_{k}, 0' \rangle$, and $E_i = \langle 0, x_i, 0' \rangle$, for all $2 \le i \le m$. Consider the cycle $(E_1,E_2)$ and construct a 2-uniform word, as per the following, representing the cycle using the technique given in \cite[Section 3.2]{kitaev15mono}:
		
		$$w = 0 x_2 a_1 0 u 0' a_{k} x_2 0',$$ where $u = a_2 a_1 \cdots  a_{i} a_{i-1} \cdots a_{k} a_{k-1}$ (for clarity, $2 \le i \le k$).
		Since $x_2, x_3, \ldots, x_m$ have the same neighbourhood, i.e., $\{0,0'\}$, replace first occurrence of $x_2$ by the word $x_2 x_3 \cdots x_m$, and the second occurrence of $x_2$ by the word  $x_m  \cdots x_3 x_2$ in $w$ to get the following 2-uniform word, which evidently represents $M$:
		$$0 x_2  \cdots x_i \cdots x_m a_1 0 u 0' a_{k} x_m \cdots x_i \cdots x_2 0',$$ (for clarity, $2 \le i \le m$).
		
		\item Suppose $M$ has two constituent paths, say $E_1$ and $E_2$, of length at least three and others of length two. For $k_1, k_2 \ge 2$, let $E_1 = \langle 0, a_1, a_2, \ldots, a_{k_1}, 0' \rangle$,  $E_2 = \langle 0, b_1, b_2, \ldots, b_{k_2}, 0' \rangle$, and $E_i = \langle 0, x_i, 0' \rangle$, for all $3 \le i \le m$. We construct a 2-uniform word $w'$, representing the cycle $(E_1,E_2)$, given by 
		$$w' = 0 b_1 a_1 0 u_1 0' a_{k_1}  b_{k_2} 0' v_2,$$ where $u_1 = a_2 a_1 \cdots a_{i} a_{i-1} \cdots a_{k_1} a_{k_1-1}$  and $v_2 = b_{k_2 - 1} b_{k_2} \cdots b_{j-1} b_{j}\cdots b_1 b_2$ (for clarity, $2 \le i \le k_1$ and $2 \le j \le k_2$).			
		We substitute the factors $0 b_1 a_1 0$ and $0' a_{k_1} b_{k_2} 0'$ of $w'$ by $0 b_1 x_3 \cdots x_i \cdots x_m a_1 0$ and $0' a_{k_1} x_m \cdots x_i \cdots x_3 b_{k_2} 0'$, respectively, to get the following 2-uniform word, which represents $M$:
		$$0 b_1  x_3 \cdots x_i \cdots x_m  a_1 0  u_1   0' a_{k_1}  x_m \cdots x_i \cdots x_3  b_{k_2} 0' v_2,$$ (for clarity, $3 \le i \le m$).	
		
		\item Suppose $M$ has a constituent path, say $E_1$, of length at least three, a constituent path, say $E_2$, of length one, and others of length two. Let $E_1 = \langle 0, a_1, a_2, \ldots, a_{k_1}, 0' \rangle$, $E_2 = \langle 0, 0' \rangle$, and $E_i = \langle 0,x_i,0' \rangle$, for all $3 \le i \le m$. Consider the cycle $(E_1, E_2)$ and construct the following 2-uniform word representing the cycle:
		$$w'' =  0 0' a_1 0 u_1 0' a_{k_1}.$$
		Since $x_3,x_4 \ldots, x_m$ have the same neighbourhood, i.e., $\{0,0'\}$, replace the factor $00'$ by the word $x_3 \cdots x_i \cdots x_m 0 0' x_m \cdots x_i \cdots x_3$ to get the following 2-uniform word, which represents $M$:
		$$x_3 \cdots x_i \cdots x_m 0 0' x_m \cdots x_i \cdots x_3 a_1 0 u_1 0' a_{k_1},$$ (for clarity, $3 \le i \le m$).
		
		\item Suppose $M$ has two constituent paths, say $E_1$ and $E_2$, of length at least three, a constituent path, say $E_3$, of length one, and others of length two. Let $E_1 = \langle 0, a_1, a_2, \ldots, a_{k_1}, 0' \rangle$,  $E_2 = \langle 0, b_1, b_2, \ldots, b_{k_2}, 0' \rangle$, $E_3 = \langle 0, 0'\rangle$, and $E_i = \langle 0, x_i, 0' \rangle$, for all $4 \le i \le m$. Then, as per the previous case, consider the 2-uniform word representing $(E_1, E_3, E_4, \ldots, E_m)$ as per the following:
		$$x_4 \cdots x_i \cdots x_m 0 0' x_m \cdots x_i \cdots x_4  a_1 0 u_1 0' a_{k_1},$$ (for clarity, $4 \le i \le m$).
		In this word, replace the factor $00'$ by the word $b_1 0 v_2^R 0' b_{k_2}$, where $v_2^R$ is the reversal\footnote{The reversal of a word is the word obtained by writing its letters in the reverse order.} of $v_2$, to get the following 2-uniform word representing $M$:
		$$x_4 \cdots x_i \cdots x_m b_1 0 v_2^R 0' b_{k_2} x_m \cdots x_i \cdots x_4 a_1 0 u_1 0' a_{k_1}.$$ 
	\end{itemize}
	Hence, $\mathcal{R}(M) = 2$. \qed
\end{proof}

\begin{figure}
	\centering
	\begin{minipage}{.4\textwidth}
		\centering
		\begin{tikzpicture}[scale=0.9]
			\vertex (1) at (0,3) [label=above:$0'$] {};  
			\vertex (2) at (-1.5,2) [label=left:$1$] {}; 
			\vertex (3) at (0,2) [label=left:$2$] {}; 
			\vertex (4) at (1.5,2) [label=right:$3$] {}; 
			\vertex (5) at (0,0) [label=below:$0$] {}; 
			\vertex (6) at (-1.5,1) [label=left:$4$] {}; 
			\vertex (7) at (0,1) [label=left:$5$] {}; 
			\vertex (8) at (1.5,1) [label=right:$6$] {}; 
			\path
			(1) edge (2)
			(1) edge (3)
			(1) edge (4)
			(2) edge (6)
			(3) edge (7)
			(4) edge (8)
			(1) edge (2)
			(1) edge (3)
			(1) edge (4)
			(2) edge (6)
			(3) edge (7)
			(4) edge (8)
			(5) edge (6)
			(5) edge(7)
			(5) edge (8);
		\end{tikzpicture}		
		\caption{$M_3$}
		\label{F3}
	\end{minipage}
	\begin{minipage}{.5\textwidth}
		\centering
		\begin{tikzpicture}[scale=0.7]
			\vertex (1) at (0,4) [label=above:$0'$] {};  
			\vertex (2) at (-1.5,3) [label=left:$1$] {}; 
			\vertex (3) at (0,2) [label=left:$2$] {}; 
			\vertex (4) at (1.5,3) [label=right:$3$] {}; 
			\vertex (5) at (0,0) [label=below:$0$] {}; 
			\vertex (6) at (-1.5,1) [label=left:$4$] {}; 
			\vertex (7) at (0,1) [label=left:$5$] {}; 
			\vertex (8) at (1.5,1) [label=right:$6$] {}; 
			\path		
			(2) edge (6)
			(3) edge (7)
			(4) edge (8)
			(2) edge (6)
			(3) edge (7)
			(4) edge (8)
			(5) edge (6)
			(5) edge(7)
			(5) edge (8)
			(2) edge (4)
			(2) edge (3)
			(3) edge (4);
			\path[dashed]
			(1) edge (2)
			(1) edge (3)
			(1) edge (4);
		\end{tikzpicture}
		\caption{Local complementation of $M_3$ at 0'}
		\label{localf3}
	\end{minipage}		
\end{figure}

\begin{lemma} \label{f_3 ref}
	Let $M_3$ be a melon graph with three constituent paths, each of length exactly three. Then $\mathcal{R}(M_3) = 3$.
\end{lemma}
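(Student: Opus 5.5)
Since $\mathcal{R}(M_3)\le 3$ already follows from Theorem \ref{3-word-fg}, and $\mathcal{R}(M_3)\ne 1$ because $M_3$ is not complete, it remains to show that $M_3$ is not $2$-word-representable. By the characterization of graphs with representation number at most two recalled in the introduction \cite{halldorsson11}, it suffices to show that $M_3$ is not a circle graph. The figure of the local complementation of $M_3$ at $0'$ indicates the route: circle graphs are closed under local complementation and under taking induced subgraphs (Bouchet). So I will apply local complementations and vertex deletions to reach a graph known not to be a circle graph.

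Locally complementing $M_3$ at $0'$ complements the induced subgraph on $N(0')=\{1,2,3\}$, making it a triangle. Deleting $0'$ then leaves the graph $H$ on $\{0,1,\dots,6\}$ with edges $12,13,23,14,25,36,04,05,06$. In $H$, the triangle $1,2,3$ has pendant paths $1\text{-}4$, $2\text{-}5$, $3\text{-}6$, and these paths are joined at the common vertex $0$. Equivalently, $H$ is the wheel $W_5$ with centre $0$ and rim $0$? — this needs checking. A cleaner way is to find an induced subgraph of $H$, or of a further local complement, that is one of Bouchet's three obstructions $W_5$, $W_7$, $BW_3$. For instance, local complementation at $0$ turns $\{4,5,6\}$ into a triangle; deleting $0$ then gives the triangular prism on triangles $\{1,2,3\}$ and $\{4,5,6\}$ with matching $14,25,36$. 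The prism is a circle graph, so this particular choice is too much, and I should keep $0$. Instead, I will try pivots or local complementations at $4$, $5$, or $6$ to reach $W_5$. $H$ has $7$ vertices, and $W_5$ has $6$, so one deletion after a suitable local complement should suffice.

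The main obstacle is identifying the exact sequence of operations that produces a Bouchet obstruction. If that search becomes messy, I will fall back on a direct argument. Suppose $w$ is a $2$-uniform word representing $M_3$, i.e.\ a chord diagram. Cyclic shifts preserve representation for uniform words, so I may normalize $w$ to begin with $0$. The three paths $0\text{-}4\text{-}1\text{-}0'$, $0\text{-}5\text{-}2\text{-}0'$, $0\text{-}6\text{-}3\text{-}0'$ each require chord crossings in a chain, while chords from different paths must not cross except at shared endpoints. I will then do a case analysis on the positions of the occurrences of $4,5,6$ relative to the two occurrences of $0$, and of $1,2,3$ relative to $0'$. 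The aim is to show that the three paths cannot be arranged as non-crossing parallel chains, contradicting the existence of $w$. This would give $\mathcal{R}(M_3)=3$.
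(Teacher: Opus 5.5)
\textbf{Gap: the lower bound is never proved.} Your reduction matches the paper's. The upper bound is Theorem~\ref{3-word-fg}. For the lower bound, the paper also takes the local complement at $0'$ and deletes $0'$, obtaining exactly your graph $H$ (the solid edges in Fig.~\ref{localf3}). It then finishes by noting that $H$ occurs in the computer-generated list of graphs with representation number three in \cite{Akgun_2019}. Your proposal has no corresponding step: nothing in it shows that $H$, or $M_3$, is not a circle graph. The search for a Bouchet obstruction is left open. The aside that $H$ might be $W_5$ cannot be right, since $H$ has seven vertices. The chord-diagram case analysis is announced but not carried out. As written, this is a strategy, not a proof.

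\textbf{The step you discarded was the right one.} The error is the sentence ``The prism is a circle graph''. Locally complementing $H$ at $0$ and deleting $0$ gives triangles $\{1,2,3\}$, $\{4,5,6\}$ with matching $14,25,36$. This is $K_3\square K_2$, which is \emph{not} a circle graph; the paper itself uses this in Section~\ref{line_melon}. You can check it directly. Locally complementing this prism at $1$ replaces the edge $23$ by $24$ and $34$. Then $4$ is adjacent to every other vertex and $1,2,5,6,3$ induce a $5$-cycle, so the result is $W_5$. Hence $W_5$ is a vertex-minor of $M_3$ via the sequence: local complementation at $0'$, delete $0'$, local complementation at $0$, delete $0$, local complementation at $1$. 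Since circle graphs are closed under vertex-minors and $W_5$ is not a circle graph \cite{bouchet}, $\mathcal{R}(M_3)\ge 3$ follows with no computer search.

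\textbf{Your fallback also closes in a few lines.} Chord $0$ crosses the pairwise non-crossing chords $4,5,6$, so up to symmetry the cyclic order of their endpoints is $0\,4\,5\,6\,0\,6\,5\,4$. Each chord $i\in\{1,2,3\}$ crosses exactly one of $0,4,5,6$, namely $i+3$. So it must straddle a single endpoint of $i+3$ and lie inside one arc of chord $0$. Chord $0'$ also lies inside one arc of $0$ and must cross $1,2,3$, so all three lie in that same arc. But $0'$ crosses none of $0,4,5,6$, so it is confined to one gap between consecutive endpoints of $0,4,5,6$. In that arc, chord $1$ occupies the two gaps beside the endpoint of $4$ and chord $3$ the two gaps beside the endpoint of $6$. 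These are disjoint, so $0'$ cannot cross both $1$ and $3$.
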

\begin{proof}
	Assume $\mathcal{R}(M_3) = 2$. Since the class of circle graphs is closed under local complementation\footnote{The local complement of a graph at vertex $v$ is the graph obtained by complementing the edges of the induced subgraph with the vertex set $N(v)$, the neighborhood of $v$.} (cf. \cite{bouchet}), consider the graph as shown in Fig. \ref{localf3}, obtained by local complementation at vertex $0'$ of $M_3$ (given in Fig. \ref{F3}). Note that its induced subgraph represented in solid edges has representation number three (compare with the list of graphs in \cite[Figure 4]{Akgun_2019}).  Hence, the graph in Fig. \ref{localf3} is not a circle graph and so is not $M_3$. Hence, $\mathcal{R}(M_3) = 3$. \qed
\end{proof}

Let $\mathcal{M}_3$ be the family of melon graphs with three constituent paths, each of length at least three.

\begin{lemma}\label{vertex_minor}
	If $M \in \mathcal{M}_3$, then $\mathcal{R}(M) = 3$.
\end{lemma}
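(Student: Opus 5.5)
The upper bound $\mathcal{R}(M)\le 3$ is Theorem~\ref{3-word-fg}, so the work is to show $\mathcal{R}(M)\neq 2$, i.e., that $M$ is not a circle graph. Since $\mathcal{R}(M)=1$ only for complete graphs, $\mathcal{R}(M)=3$ then follows. The label of the lemma suggests the tool: circle graphs are closed under taking vertex-minors, that is, local complementation together with vertex deletion (cf.~\cite{bouchet}). The plan is to exhibit $M_3$ as a vertex-minor of every $M\in\mathcal{M}_3$. Lemma~\ref{f_3 ref} shows $M_3$ is not a circle graph, so $M$ is not one either.

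The key reduction shortens a path. Let $u$ be a vertex of degree two with neighbours $v,w$ that are non-adjacent and share no other neighbourhood relation with each other. Locally complementing at $u$ adds the single edge $vw$, and deleting $u$ then removes it from the path. The net effect is that the path $\ldots v\,u\,w\ldots$ is replaced by $\ldots v\,w\ldots$, which contracts an edge incident to a degree-two vertex. No other adjacencies change, because $N(u)=\{v,w\}$.

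Now write the three constituent paths of $M$ as $E_i=\langle 0,a^i_1,\ldots,a^i_{k_i},0'\rangle$ with $k_i\ge 2$. While $k_i>2$, apply the reduction at an internal vertex $a^i_j$ of $E_i$; its two neighbours are non-adjacent because the path still has length at least three after the step. Each application lowers $k_i$ by one and keeps the graph a melon graph with three paths. By induction on $\sum_i (k_i-2)$, we reach exactly $M_3$, in which every path has length three. Hence $M_3$ is a vertex-minor of $M$.

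The main point to verify carefully is that the neighbours of the vertex being removed are non-adjacent at every step. Otherwise local complementation would delete an edge rather than create one. This holds as long as the current path has length at least four, which is precisely the condition for reducing further. I expect this bookkeeping, together with correctly citing vertex-minor closure of circle graphs (closure under induced subgraphs is clear), to be the only delicate part.
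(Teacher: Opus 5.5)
Your proposal is correct and follows the paper's argument. Circle graphs are closed under vertex-minors, and $M_3$ is obtained from $M$ by repeatedly locally complementing at an internal vertex of a constituent path of length at least four and then deleting that vertex, so Lemma~\ref{f_3 ref} rules out $\mathcal{R}(M)=2$. You also supply the detail the paper leaves implicit: the two neighbours of such a vertex are non-adjacent, because the only possible chord, $00'$, is absent in $\mathcal{M}_3$, so each step simply shortens one path by one edge.
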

\begin{proof}
	Recall that the class of circle graphs is closed under vertex-minor\footnote{A graph obtained by a sequence of vertex deletions and local complementations in a graph is called its vertex-minor.} \cite{bouchet}. Note that $M_3$ can be obtained from $M$ by a successive operations of local complementation followed by vertex deletion on the intermediate vertices of the constituent paths. Thus, if $\mathcal{R}(M) = 2$, then $M_3$ must be a circle graph; a contradiction to Lemma \ref{f_3 ref}. Hence,  $\mathcal{R}(M) = 3$. \qed
\end{proof}

\begin{figure}[t]
	\centering
	\begin{minipage}{.4\textwidth}
		\centering
		\begin{tikzpicture}[scale=0.7]
			\vertex (1) at (0,3.3) [label=above:$0'$] {};  
			\vertex (2) at (-1.5,2) [label=left:$1$] {}; 
			\vertex (3) at (1,2) [label=left:$2$] {}; 
			\vertex (4) at (1.5,2) [label=right:$3$] {}; 
			\vertex (5) at (0,0) [label=below:$0$] {}; 
			\vertex (6) at (-1.5,1) [label=left:$4$] {}; 
			\vertex (7) at (1,1) [label=left:$5$] {}; 
			\vertex (8) at (1.5,1) [label=right:$6$] {}; 
			\path
			(1) edge (2)
			(1) edge (3)
			(1) edge (4)
			(1) edge (5)
			(2) edge (6)
			(3) edge (7)
			(4) edge (8)
			(1) edge (2)
			(1) edge (3)
			(1) edge (4)
			(1) edge (5)
			(2) edge (6)
			(3) edge (7)
			(4) edge (8)
			(5) edge (6)
			(5) edge(7)
			(5) edge (8);
		\end{tikzpicture}
		\caption{$B_3$}
		\label{F3'}
	\end{minipage}
	\begin{minipage}{.5\textwidth}
		\centering
		\begin{tikzpicture}[scale=0.7]
			\vertex (1) at (0,0) [label= left:$0$] {};  
			\vertex (2) at (-2.2,-1.5) [label=below:$1$] {}; 
			\vertex (3) at (2.2,-1.5) [label=below:$2$] {}; 
			\vertex (4) at (0,1.8) [label=above:$3$] {}; 
			
			\vertex (5) at (-0.5,0.3) [label= left:$6$] {}; 
			\vertex (6) at (0.7,0) [label=above:$5$] {}; 
			\vertex (7) at (0,-0.7) [label=below:$4$] {}; 
			
			\vertex (0) at (3, -1) [label= right:$0'$] {}; 
			\path
			(1) edge (2)
			(1) edge (3)
			(1) edge (4)
			(1) edge (5)
			(1) edge (6)
			(1) edge (7)
			(2) edge (7)
			(2) edge (3)
			(4) edge (2)
			(4) edge (5)
			(3) edge (6)
			(3) edge (4);
			\path[dashed]
			(4) edge (0)
			(3) edge (0)
			(2) edge (0)
			(1) edge (0);
		\end{tikzpicture}
		\caption{Local complementation of $B_3$ at 0'}
		\label{localf3'}
	\end{minipage} 	
\end{figure}

A \textit{book graph} is a melon graph in which $0$ and $0'$ are adjacent and every other constituent path is of length three, called a page. A book graph with $m$ pages is denoted by $B_m$. 

\begin{lemma}\label{b_3-rep}
	The representation number of the book graph $B_3$ (given in Fig. \ref{F3'}) is three. 
\end{lemma}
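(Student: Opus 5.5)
Since $B_3$ is a melon graph, Theorem~\ref{3-word-fg} gives $\mathcal{R}(B_3) \le 3$. $B_3$ is neither $K_2$ nor $K_3$, so $\mathcal{R}(B_3) \ge 2$. The whole task is therefore to show $\mathcal{R}(B_3) \neq 2$, that is, that $B_3$ is not a circle graph. I will follow the strategy of Lemma~\ref{f_3 ref}: assume $\mathcal{R}(B_3)=2$, perform a local complementation, and exhibit a non-circle induced subgraph. This works because circle graphs are closed under local complementation and under taking induced subgraphs (vertex deletion).

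First I compute the local complement at $0'$, as drawn in Fig.~\ref{localf3'}. We have $N(0') = \{0,1,2,3\}$. In $B_3$ these four vertices induce no edges: the pages are $0,4,1,0'$, then $0,5,2,0'$, then $0,6,3,0'$, and $0$ is joined only to $4,5,6,0'$. Complementing therefore turns $\{0,1,2,3\}$ into a $K_4$. The remaining edges are unchanged: $1$--$4$, $2$--$5$, $3$--$6$, the edges $0$--$4$, $0$--$5$, $0$--$6$, and the edges from $0'$ to $0,1,2,3$. Next I delete $0'$ and obtain the graph $H$ drawn in solid edges. $H$ is a $K_4$ on $\{0,1,2,3\}$ together with three vertices $4,5,6$, where $4$ is adjacent to $0,1$, vertex $5$ to $0,2$, and vertex $6$ to $0,3$. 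In other words, $H$ is a $K_4$ with a triangle glued onto each of the three edges through $0$.

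The main step is to certify that $H$ is not a circle graph. The first thing I would try is to match $H$ against the list of minimal non-circle graphs (or graphs with representation number three) in \cite[Figure 4]{Akgun_2019}, exactly as in Lemma~\ref{f_3 ref}. If it is not literally listed, I would perform one more local complementation inside $H$, at $0$, which is adjacent to all other vertices of $H$. Its neighbourhood is $\{1,2,3,4,5,6\}$, so complementing produces a graph on those six vertices. Possibly after deleting $0$, this graph is the complement of ($K_3$ on $\{1,2,3\}$ plus the matching $14,25,36$). That complement is a $6$-cycle-like structure closely related to $W_5$ or the prism. I would then compare the result with Bouchet's obstructions $W_5$, $W_7$, and $BW_3$, or with the known fact that the prism complement/triangular prism analysis in \cite{Akgun_2019} gives representation number three.

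I expect this identification to be the main obstacle, since it relies on recognising $H$, or a vertex-minor of it, as a known non-circle graph. Once that is done, the contradiction shows that $B_3$ is not a circle graph, so $\mathcal{R}(B_3)=3$.
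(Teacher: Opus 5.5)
Your setup coincides with the paper's: the bounds $2\le\mathcal{R}(B_3)\le 3$, local complementation at $0'$, and passing to the solid-edge subgraph $H$, which you compute correctly. However, the step that carries the proof, showing that $H$ is not a circle graph, is left unresolved, and the routes you suggest do not settle it. $H$ cannot appear in the list of graphs with representation number three in \cite[Figure 4]{Akgun_2019}, because $H$ is not word-representable at all. Your fallback also fails. Locally complementing $H$ at $0$ and deleting $0$ gives the complement of the net, i.e.\ the $3$-sun (triangle $\{4,5,6\}$ with $1\sim 5,6$, $2\sim 4,6$, $3\sim 4,5$). This graph is not related to the prism, and it \emph{is} a circle graph: the $2$-uniform word $2\,6\,4\,2\,1\,5\,6\,1\,3\,4\,5\,3$ represents it. Keeping $0$ instead just returns a graph locally equivalent to $H$, so nothing is gained.

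The missing observation is that $0$ is adjacent to every other vertex of $H$, and $H-0$ is the net, which is not a comparability graph. This is the graph $S_1$ of Fig.~\ref{non_com}: the triangle $\{1,2,3\}$ with pendants $4,5,6$. In any transitive orientation, the triangle is ordered $i\to j\to k$ (with $i\to k$). The pendant $p$ at the middle vertex $j$ can then be oriented neither way: $p\to j$ would force an edge $pk$, and $j\to p$ would force an edge $ip$. By \cite[Theorem 10]{kitaev08}, the neighbourhood of every vertex of a word-representable graph induces a comparability graph. Hence $H$ is not word-representable, so it is not a circle graph. This is exactly what the paper uses, by locating $H$ among the non-word-representable graphs of \cite[Figure 3.9]{kitaev15mono}. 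Equivalently, a graph with a universal vertex is a circle graph iff deleting that vertex leaves a permutation graph, and the net is not one. With this step inserted, your argument is complete.
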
 
\begin{proof}
	Suppose $\mathcal{R}(B_3) = 2$. Since the circle graphs are closed under local complementation, consider the graph obtained by local complementation at vertex $0'$ of $B_3$, as shown in Fig. \ref{localf3'}. Note that its induced subgraph represented in solid edges in Fig. \ref{localf3'} is not a word-representable graph (compare with the list of graphs in \cite[Figure 3.9]{kitaev15mono}).  Hence, the graph in Fig. \ref{localf3'} is not a circle graph and so is not $B_3$. Therefore, $\mathcal{R}(B_3) \ge 3$. Since $\mathcal{R}(B_3) \le 3$, we have $\mathcal{R}(B_3) = 3$. \qed
\end{proof}

We have the following consequence of Lemma \ref{b_3-rep}. 
Let $\mathcal{M}_4$ be the family of melon graphs with four constituent paths, of which one is of length one and the others of length at least three. 

\begin{corollary}\label{vertex_minor_2}
	If $M \in \mathcal{M}_4$, then $\mathcal{R}(M) = 3$.
\end{corollary}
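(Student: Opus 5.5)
The plan is to mirror the argument of Lemma \ref{vertex_minor}, now using Lemma \ref{b_3-rep} in place of Lemma \ref{f_3 ref}. Let $M = (E_1, E_2, E_3, E_4) \in \mathcal{M}_4$, where $E_4 = \langle 0, 0' \rangle$ and $E_1, E_2, E_3$ have length at least three. Since Theorem \ref{3-word-fg} already gives $\mathcal{R}(M) \le 3$, it suffices to show $\mathcal{R}(M) \ne 2$. We have $\mathcal{R}(M) \ne 1$ because $M$ is not complete. So I would assume, for contradiction, that $\mathcal{R}(M) = 2$, i.e., that $M$ is a circle graph.

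The key step is to show that $B_3$ is a vertex-minor of $M$. Consider a constituent path $E_i = \langle 0, a_1, \ldots, a_k, 0' \rangle$ with $k \ge 3$. Take an intermediate vertex $a_j$ whose two neighbours are both internal, or more generally any internal vertex of degree two with neighbours $u, v$. Local complementation at $a_j$ adds the edge $uv$, since $N(a_j) = \{u, v\}$ and $u, v$ are nonadjacent, the path $E_i$ having length at least three and the graph being a melon. Deleting $a_j$ afterwards yields the same melon graph with $E_i$ shortened by one. I need to check that this is valid even when $u$ or $v$ is an endpoint $0$ or $0'$. We only need $u$ and $v$ to be nonadjacent, which holds whenever the path has length at least three: for length $\ge 3$, two vertices at distance two along the path are not adjacent in $M$. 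The one worry is $\{u, v\} = \{0, 0'\}$, which happens only for length two, so it is excluded.

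Iterating, each of $E_1, E_2, E_3$ is reduced to length exactly three while the edge $00'$ is untouched. This gives $B_3$ as a vertex-minor of $M$. Since circle graphs are closed under vertex-minors \cite{bouchet}, $B_3$ would then be a circle graph, contradicting Lemma \ref{b_3-rep}. Hence $\mathcal{R}(M) = 3$.

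The only delicate point is verifying that each shortening step preserves the melon structure and does not create unwanted edges. This is immediate because the local complementation acts only on the two-element neighbourhood of a degree-two vertex.
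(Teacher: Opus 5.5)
Your proposal is correct and takes the same route as the paper. The paper simply asserts that $B_3$ is a vertex-minor of $M$, then combines closure of circle graphs under vertex-minors with Lemma~\ref{b_3-rep} and the upper bound of Theorem~\ref{3-word-fg}; your explicit shortening step (local complementation at a degree-two internal vertex whose two neighbours are nonadjacent, then deleting it) supplies the detail the paper leaves implicit, exactly as in its proof of Lemma~\ref{vertex_minor}.
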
 
\begin{proof}
	Note that $B_3$ is a vertex-minor of $M$. Hence,  $\mathcal{R}(M) = 3$. \qed	
\end{proof}

We now summarize the above results and give two characterizations for the representation number of melon graphs. Indeed, we show that the representation number of a melon graph is three if and only if it has three constituent paths, each of length at least three. 

Let $M = (E_1, \ldots, E_m)$ be a melon graph. If the number of constituent paths of length at least three is at most two, then by Lemma 4, $\mathcal{R}(M) = 2$. On the other hand, $M$ has at least three constituent paths of length at least three. Suppose $E_1, E_2$ and $E_3$ are of length at least three. If $M$ has a constituent path of length one (i.e., $0$ and $0'$ are adjacent), say $E_4$, then it can be observed that $M$ has an induced subgraph from $\mathcal{M}_4$, which is obtained by deleting $E_5, \ldots, E_m$ except the endpoints. Hence, by Corollary \ref{vertex_minor_2}, $\mathcal{R}(M) = 3$. If $0$ and $0'$ are not adjacent in $M$, then we can observe that $M$ has an induced subgraph from $\mathcal{M}_3$ so that $\mathcal{R}(M) = 3$. Thus, we have the following characterization of melon graphs in terms of induced subgraphs with respect to the representation number. 

\begin{theorem}\label{3-induced-fg}
	Let $M$ be a melon graph. Then, $\mathcal{R}(M)= 3$ if and only if it contains an induced subgraph from $\mathcal{M}_3 \cup \mathcal{M}_4$.
\end{theorem}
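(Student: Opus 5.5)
The plan is to prove the two directions separately, relying on the lemmas above. The key auxiliary fact is that representation number is monotone under induced subgraphs. If $H$ is an induced subgraph of $G$ and $w$ is a $k$-uniform word representing $G$, then deleting from $w$ all letters outside $V(H)$ gives a $k$-uniform word. Alternation of two letters depends only on the subword formed by those two letters, so this word represents $H$. Hence $\mathcal{R}(H) \le \mathcal{R}(G)$ whenever $H$ is an induced subgraph of $G$.

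For sufficiency, suppose $M$ contains an induced subgraph $H \in \mathcal{M}_3 \cup \mathcal{M}_4$. By Lemma \ref{vertex_minor} or Corollary \ref{vertex_minor_2}, $\mathcal{R}(H) = 3$. Monotonicity then gives $\mathcal{R}(M) \ge 3$, and Theorem \ref{3-word-fg} gives $\mathcal{R}(M) \le 3$. Therefore $\mathcal{R}(M) = 3$.

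For necessity, I argue by contraposition. Suppose $M$ has no induced subgraph from $\mathcal{M}_3 \cup \mathcal{M}_4$. I first claim that $M$ then has at most two constituent paths of length at least three. Suppose instead that $E_1, E_2, E_3$ all have length at least three. Deleting the internal vertices of every other constituent path leaves an induced subgraph on $E_1 \cup E_2 \cup E_3$, together with the edge $00'$ if $M$ has a path of length one.
\begin{itemize}
\item This subgraph is induced because constituent paths are internally vertex-disjoint, so no extra edges can appear among the vertices kept.
\item Without the edge $00'$ it lies in $\mathcal{M}_3$; with it, it lies in $\mathcal{M}_4$.
\end{itemize}
Either way this contradicts the hypothesis, which proves the claim.

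With at most two paths of length at least three, the remaining cases are handled by earlier results.
\begin{itemize}
\item If $m \le 2$, then $M$ is a path or a cycle, so $\mathcal{R}(M) \le 2$.
\item If $m \ge 3$, at least $m-2$ constituent paths have length at most two, so the Lemma above (the one following Theorem \ref{3-word-fg}) gives $\mathcal{R}(M) = 2$.
\end{itemize}
In all cases $\mathcal{R}(M) \ne 3$, which completes the contrapositive.

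I expect the only delicate points to be two small checks. The first is the induced-subgraph hereditary property, which the paper uses only implicitly. The second is that the subgraph obtained by deletion is genuinely induced, which depends on the chord $00'$ being the only possible edge between different constituent paths.
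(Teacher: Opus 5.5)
Your proof is correct and follows the paper's argument. Both use the same dichotomy on whether at least three constituent paths have length at least three: the two-representability lemma settles one case, and in the other, deleting the internal vertices of the remaining paths gives an induced member of $\mathcal{M}_3$ or $\mathcal{M}_4$, to which Lemma~\ref{vertex_minor} or Corollary~\ref{vertex_minor_2} applies. The only difference is that you state and justify explicitly that $\mathcal{R}$ is monotone under induced subgraphs and that the retained subgraph is induced, both of which the paper uses without comment.
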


Note that the melon graph $M_3$ is a vertex-minor of every member in $\mathcal{M}_3$. Also, the melon graphs in $\mathcal{M}_4$ contain $B_3$ as vertex-minor. Thus, we have the following corollary with a characterization in terms of vertex-minors. 

\begin{corollary}
	Let $M$ be a melon graph. Then, $\mathcal{R}(M)= 3$ if and only if $M$ contains $M_3$ or $B_3$ as a vertex-minor.
\end{corollary}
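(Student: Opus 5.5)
We have to prove the last corollary: $\mathcal{R}(M)=3$ if and only if $M$ contains $M_3$ or $B_3$ as a vertex-minor. The approach is to route both directions through Theorem \ref{3-induced-fg}. We also use two facts: the class of circle graphs (equivalently, graphs with $\mathcal{R}\le 2$) is closed under vertex-minors \cite{bouchet}, and every induced subgraph is a vertex-minor.

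\emph{Sufficiency.} Suppose $M$ contains $M_3$ or $B_3$ as a vertex-minor.
\begin{itemize}
\item Assume for contradiction that $\mathcal{R}(M)\le 2$. Then $M$ is a circle graph. (We have $M\neq K_2,K_3$, since neither contains these eight- or seven-vertex graphs as vertex-minors.)
\item Closure under vertex-minors then makes $M_3$ or $B_3$ a circle graph.
\item This contradicts Lemma \ref{f_3 ref} or Lemma \ref{b_3-rep}, respectively.
\item Hence $\mathcal{R}(M)\ge 3$, and Theorem \ref{3-word-fg} gives $\mathcal{R}(M)=3$.
\end{itemize}

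\emph{Necessity.} Suppose $\mathcal{R}(M)=3$. By Theorem \ref{3-induced-fg}, $M$ has an induced subgraph $N\in\mathcal{M}_3\cup\mathcal{M}_4$, which is in particular a vertex-minor of $M$. It remains to show that $M_3$ is a vertex-minor of every member of $\mathcal{M}_3$, and $B_3$ of every member of $\mathcal{M}_4$, as asserted before the corollary and used in Lemma \ref{vertex_minor} and Corollary \ref{vertex_minor_2}. Transitivity of the vertex-minor relation then finishes the proof.

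\emph{The shortening step.} This is the one step I expect to need real checking. Take an internal vertex $v$ of a constituent path of length at least four, and let its neighbours on the path be $u,w$. Since $N(v)=\{u,w\}$, local complementation at $v$ only toggles the pair $uw$, adding the edge $uw$. Deleting $v$ afterwards replaces the subpath $u\,v\,w$ by the edge $uw$. The result is a melon graph whose path is one shorter and whose other paths are untouched; the endpoints $0,0'$ are never chosen as $v$, and the edge $00'$ in the $\mathcal{M}_4$ case is unaffected.

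Repeating this step reduces each of the three long paths to length exactly three, giving $M_3$ from a member of $\mathcal{M}_3$ and $B_3$ from a member of $\mathcal{M}_4$. The only care needed is to confirm that the toggled pair $uw$ was not already an edge. It is not: when the path has length at least four, $u$ and $w$ cannot both be $0$ and $0'$.
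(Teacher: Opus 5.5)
Your proposal is correct and takes the same route as the paper. Necessity comes from Theorem~\ref{3-induced-fg} plus the observation that $M_3$ (resp.\ $B_3$) is a vertex-minor of every member of $\mathcal{M}_3$ (resp.\ $\mathcal{M}_4$), obtained by exactly the local-complementation-then-deletion shortening you spell out; sufficiency comes from closure of circle graphs under vertex-minors together with Lemmas~\ref{f_3 ref} and~\ref{b_3-rep} and Theorem~\ref{3-word-fg}. Two harmless slips: $B_3$ has eight vertices, not seven, and the parenthetical excluding $K_2,K_3$ is unnecessary because complete graphs are already circle graphs.
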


Further, we have the following corollary for the representation number of book graphs. 

\begin{corollary}
	Let $B_m$ be a book graph. Then, $\mathcal{R}(B_m)= 3$ if and only if it contains $B_3$ as an induced subgraph.
\end{corollary}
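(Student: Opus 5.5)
The statement to prove is the last corollary: for a book graph $B_m$, $\mathcal{R}(B_m)=3$ if and only if $B_m$ contains $B_3$ as an induced subgraph. The plan is to reduce both directions to facts already established, namely Lemma 4 (at most two long paths gives representation number two), Lemma \ref{b_3-rep}, and Theorem \ref{3-word-fg}. The key observation is that the structure of $B_m$ forces $m\ge 3$ exactly when $B_3$ appears as an induced subgraph.

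\textbf{Step 1 (structure of $B_m$).} By definition, $B_m=(E_0,E_1,\ldots,E_m)$ where $E_0=\langle 0,0'\rangle$ and each page $E_i=\langle 0,a_i,b_i,0'\rangle$ has length exactly three. Deleting the interior vertices $a_i,b_i$ of some pages leaves an induced subgraph that is again a book graph, with the remaining pages.

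\textbf{Step 2 (``if'' direction).} Suppose $B_3$ is an induced subgraph of $B_m$. Representation number is monotone under induced subgraphs, since restricting a $k$-uniform representing word to a vertex subset represents the induced subgraph. Hence $\mathcal{R}(B_m)\ge \mathcal{R}(B_3)=3$ by Lemma \ref{b_3-rep}. Theorem \ref{3-word-fg} gives $\mathcal{R}(B_m)\le 3$, so equality holds.

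\textbf{Step 3 (``only if'' direction).} Argue by contraposition and assume $B_m$ has no induced $B_3$. Then $m\le 2$: if $m\ge 3$, keeping three pages and the edge $00'$ and deleting the interiors of all other pages would give an induced $B_3$, by Step 1. So $B_m$ has at most two constituent paths of length at least three, and the rest are of length one, with no paths of length two.

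\textbf{Step 4 (finishing the small cases).} Apply Lemma 4, using the third and fourth cases of its proof, which handle one long path plus the edge $00'$ and two long paths plus $00'$. The length-two list $x_i$ is empty here, and the words remain valid with an empty list. If $m+1\le 2$, the graph is a path or a cycle other than $K_2,K_3$, so its representation number is $2$ by the earlier remark. Either way $\mathcal{R}(B_m)\neq 3$.

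The main point to check is in Step 4. Lemma 4 is stated with the hypothesis that ``$m-2$ constituent paths are of length at most two'', and the case $B_2$ has exactly $m-2=1$ short path (the edge $00'$), so the lemma applies. I would double-check that the $B_2$ word $b_10v_2^R0'b_{k_2}a_10u_10'a_{k_1}$ is still valid when the $x$-list is empty; it is, since it is just the concatenation used in that case of the proof.
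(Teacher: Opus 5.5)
Your proof is correct and is essentially the paper's route. The paper states this corollary without proof as a specialization of the argument behind Theorem~\ref{3-induced-fg}: $m\ge 3$ pages gives an induced $B_3\in\mathcal{M}_4$, so $\mathcal{R}(B_m)=3$ by Lemma~\ref{b_3-rep} and Theorem~\ref{3-word-fg}, while $m\le 2$ falls under Lemma 4, and your check of the $B_2$ word is right. One cosmetic slip: if $m=0$ is allowed, $B_0=K_2$ has representation number $1$ rather than $2$, which still gives $\mathcal{R}(B_m)\neq 3$.
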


\section{Characterization for Comparability and the \textit{prn}}
\label{char_prn}

All melon graphs are word-representable but not all melon graphs are comparability graphs. For example, if we consider a melon graph with two constituent paths, one of even length and the other of odd length, then the melon graph is an odd cycle, which is not a comparability graph, except $K_3$. In this section, we characterize the melon graphs which admit transitive orientations, i.e., which are comparability graphs. Further, we determine their permutation-representation number.

First, we prove that the melon graphs  with constituent paths of the same parity (either all of them are of even length or of odd length) are comparability graphs.

\begin{lemma}
	If all the constituent paths of a melon graph $M$ have the same parity, then $M$ is a bipartite graph. Hence, $M$ is a comparability graph.
\end{lemma}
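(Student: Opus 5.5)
The plan is to exhibit an explicit proper $2$-colouring of $M$ based on distances from the endpoint $0$ along each constituent path. For each constituent path $E_i = \langle 0, v_1, v_2, \ldots, v_{\ell_i - 1}, 0' \rangle$ of length $\ell_i$, assign to $v_j$ the parity of $j$, i.e., its distance from $0$ along $E_i$. Give $0$ colour $0$ and $0'$ the colour $\ell_i \bmod 2$. Since every vertex other than $0$ and $0'$ lies on exactly one constituent path (the paths are internally vertex-disjoint), this colouring of intermediate vertices is well defined. The only point needing care is the colour of $0'$. Because all $\ell_i$ have the same parity by hypothesis, the value $\ell_i \bmod 2$ is independent of $i$, so $0'$ also receives a single well-defined colour.

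Next we check properness. Every edge of $M$ lies on some constituent path, and consecutive vertices along a path $E_i$ have positions $j$ and $j+1$ (with $0$ at position $0$ and $0'$ at position $\ell_i$). Hence the two ends of every edge get different parities, so the two colour classes are independent sets and $M$ is bipartite. The case of a path of length one (the edge $\{0,0'\}$) is covered as well: all $\ell_i$ are then odd, so $0$ and $0'$ receive opposite colours.

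Finally, every bipartite graph is a comparability graph: orient each edge from the colour-$0$ side to the colour-$1$ side. No directed path of length two exists, so transitivity holds vacuously. There is no real obstacle here; the only substantive step is the well-definedness of the colour of $0'$, which is exactly where the same-parity hypothesis is used.
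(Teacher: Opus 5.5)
Your proof is correct and is essentially the paper's argument. The paper inducts on the number of constituent paths: each newly added path $E_{k+1}=\langle a_0=0,a_1,\ldots,a_r=0'\rangle$ is coloured by the parity of the index $i$, and the common parity ensures $0'$ lands in a consistent class. That is exactly your distance-from-$0$ colouring, built one path at a time. Your direct version is slightly cleaner, and it also spells out the orientation that makes a bipartite graph a comparability graph, which the paper takes for granted.
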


\begin{proof}
	We prove the result using induction on the number of constituent paths $m$ in the melon graph $M = (E_1, \ldots, E_m)$. 
	
	If $m = 1$ or 2, then clearly $M$ is a bipartite graph, as $M$ is a path or an even cycle, respectively. Consider a melon graph $M$ with $k+1$ constituent paths of the same parity. By inductive hypothesis, the melon graph $(E_1, \ldots, E_k)$ is a bipartite graph with bipartition, say $\{A, B\}$, of vertex set. Without loss of generality, suppose $0 \in A$. Let $E_{k+1} = \langle a_0 = 0, a_1, \ldots, a_r = 0'\rangle$. Place the vertices of $E_{k+1}$ such that $a_i \in A$ if $i$ is even and $a_i \in B$ if $i$ is odd. If all the constituent paths of $M$ are of even length, then clearly $0' \in A$; otherwise, $0' \in B$. In any case, the proposed partition of vertices of $E_{k+1}$ ensures the bipartition so that $M$ is a bipartite graph.\qed
\end{proof}

Suppose a melon graph $M$ has constituent paths of lengths both even and odd. If $M$ has a constituent path of even length at least four, then this along with any odd length constituent path results in an odd cycle of length at least five so that $M$ is not a comparability graph. 

Now, suppose all constituent paths of even parity have length exactly two. If $0$ and $0'$ are not adjacent, then $M$ has a constituent path of odd length at least three. This along with a constituent path of length two form an odd cycle of length at least five. Again, $M$ is not a comparability graph in this case. In case $0$ and $0'$ are adjacent, we observe that $M$ is a comparability graph. First consider the subgraph, say $M'$, of $M$ consisting of all odd length constituent paths. Note that $M'$ is a bipartite graph with bipartition, say $\{A, B\}$ and $0 \in A$. Suppose the edges of $M'$ are oriented from $A$ to $B$. Let $E' = \langle 0, x, 0' \rangle$ be any constituent path of length two in $M$. Orient the edges of $E'$ as $\overrightarrow{0x}$ (assign direction from $0$ to $x$) and $\overrightarrow{x0'}$ (assign direction from $x$ to $0'$) and note that $M'$ along with $E'$ is a comparability graph as $0$ and $0'$ are adjacent and the transitivity is preserved. Similarly, any constituent path of length two in $M$ can be oriented so that $M$ is a comparability graph. 

Thus, we have the following theorem characterizing the melon graphs admitting transitive orientations.

\begin{theorem}\label{chr_com_file}
	A melon graph $M = (E_1, \ldots, E_m)$ is a comparability graph if and only if one of the following holds: 
	\begin{enumerate}
		\item All constituent paths of $M$ have the same parity.
		\item The endpoints $0$ and $0'$ are adjacent, and all constituent paths of even parity are of length two. 
	\end{enumerate}	
\end{theorem}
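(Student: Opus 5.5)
The plan is to prove the two directions separately, reusing the discussion preceding the statement. For sufficiency, condition (1) is handled by the preceding lemma: if all constituent paths have the same parity, then $M$ is bipartite, and a bipartite graph is a comparability graph (orient every edge from one part to the other; there are no directed paths of length two, so transitivity holds vacuously). For condition (2), I will write down the orientation explicitly. Let $M'$ be the union of the odd-length constituent paths, which include the edge $\{0,0'\}$. By the lemma, $M'$ is bipartite with parts $A \ni 0$ and $B \ni 0'$, since each such path has odd length. Orient every edge of $M'$ from $A$ to $B$, and orient each length-two path $\langle 0,x,0'\rangle$ as $0\to x\to 0'$.

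To check transitivity, I list the directed two-paths $u\to v\to w$. In $M'$ there are none, because every arc goes from $A$ to $B$. A new two-path must pass through some $x$ of degree two, and the only ones are $0\to x\to 0'$; their transitive arc $0\to 0'$ is present, since $0\in A$ and $0'\in B$. A two-path cannot continue out of $0'$, because $0'\in B$ is a sink in $M'$. A two-path cannot enter $0$, because $0\in A$ is a source. Hence the orientation is transitive.

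For necessity, suppose $M$ is a comparability graph that fails (1); I will show it satisfies (2). The key fact is that an odd cycle of length at least five is an induced subgraph that is not a comparability graph, and comparability graphs are closed under induced subgraphs. Two constituent paths $E_i,E_j$ together form the cycle $(E_i,E_j)$, and this cycle is \emph{induced} in $M$ because the paths are internally vertex-disjoint, the only shared vertices are $0,0'$, and no other edges are created except possibly $\{0,0'\}$. I need to handle the chord $\{0,0'\}$ carefully: if $0\sim 0'$, a cycle formed by two paths both of length at least two has a chord, so it is not induced. To avoid this, I will instead use the cycle formed by a single path of length at least two together with the edge $0'0$.

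The case analysis then runs as follows.
\begin{enumerate}
\item Suppose some even path has length at least $4$. If $0\sim 0'$, that path plus the edge $\{0,0'\}$ is an odd cycle of length at least $5$, and it is induced. Otherwise, pair it with any odd path (which has length at least $3$, since $0\not\sim 0'$) to get an induced odd cycle of length at least $7$. Either way we have a contradiction.
\item Otherwise all even paths have length $2$. If $0\not\sim 0'$, some odd path of length at least $3$ exists, and together with a length-$2$ path it forms an induced odd cycle of length at least $5$, a contradiction. Hence $0\sim 0'$, which is exactly (2).
\end{enumerate}

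The main obstacle is the inducedness check, because the chord $0 0'$ can spoil a cycle. That is why the case $0\sim0'$ uses the path-plus-edge cycle rather than a cycle built from two paths.
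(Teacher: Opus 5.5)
Your proposal is correct and takes essentially the same route as the paper. You use the bipartite lemma for condition (1), the same orientation for condition (2) (edges of the odd-path subgraph directed from $A$ to $B$, and $0\to x\to 0'$ on each length-two path), and odd cycles of length at least five as the obstructions for necessity. When $0\sim 0'$ you pair an even path of length at least four with the edge $\{0,0'\}$, so the odd cycle is genuinely induced. This makes explicit a point the paper glosses over when it says ``any odd length constituent path''.
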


\subsection{Permutation-Representation Number}
\label{prn_melon}

Let $M_c$ be a melon graph that satisfies one of the criteria given in Theorem \ref{chr_com_file}, i.e., $M_c$ is a comparability graph.  We show that the \textit{prn} of the melon graph $M_c$ is at most three by constructing three permutations on the vertices of $M_c$. 

First we construct permutations for paths and even cycles and extend them to the desired permutations on the vertices of $M_c$. 
It is known that paths are permutationally 2-representable and even cycles are permutationally 3-representable (see \cite{trotter_dim}). However, we construct specific permutations for paths and even cycles (in Lemmas \ref{path_word}, \ref{path_2k-1}, and \ref{even_cycle_word}), which are useful in Theorems \ref{odd_parity}, \ref{even_parity} and \ref{00'adj_fc}.   

For $k \ge 2$, let $u$ and $v$ be the words over the vertices $c_2, c_3, \cdots, c_{2k-1}$ given by 
\begin{eqnarray}
	\label{eq-1} u &= c_{4} c_{3} \cdots c_{2i} c_{2i-1} \cdots c_{2k-2} c_{2k-3},\; (\text{for clarity, } 2 \le i \le k-1);\\
	\label{eq-2} v &= c_{2k-2} c_{2k-1} \cdots c_{2j} c_{2j+1} \cdots c_2 c_3,\; (\text{for clarity, } 1 \le j \le k-1).
\end{eqnarray} 

\begin{lemma} \label{path_word}
	For $k \ge 2$, the path $\langle  c_1, c_2,  \cdots, c_{2k-1}, c_{2k} \rangle$ on $2k$ vertices is represented by the concatenation of the permutations $p = c_{2} c_1 u c_{2k} c_{2k-1}$ and $q =c_{2k}  v c_1$, where $u$ and $v$ are the words given in equations (\ref{eq-1}) and (\ref{eq-2}), respectively.
\end{lemma}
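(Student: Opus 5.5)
Since $p$ and $q$ are permutations, $pq$ is a $2$-uniform word. Two letters $a,b$ alternate in $pq$ exactly when their relative order in $p$ agrees with their relative order in $q$. So the task is to check that, for $a\ne b$, the pair $\{a,b\}$ is an edge of the path exactly when $a$ and $b$ appear in the same order in $p$ and in $q$.

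First I make the orders explicit. Reading off equation~(\ref{eq-1}), $p$ is
\[ c_2\, c_1\, c_4\, c_3\, c_6\, c_5 \cdots c_{2k-2}\, c_{2k-3}\, c_{2k}\, c_{2k-1}, \]
that is, the blocks $B_i=\{c_{2i-1},c_{2i}\}$ for $i=1,\dots,k$ in increasing order of $i$, with each block internally reversed as $c_{2i}c_{2i-1}$. Reading off equation~(\ref{eq-2}), $q$ is
\[ c_{2k}\, c_{2k-2}\, c_{2k-1}\, c_{2k-4}\, c_{2k-3} \cdots c_2\, c_3\, c_1, \]
that is, the blocks $D_j=\{c_{2j},c_{2j+1}\}$ for $j=0,\dots,k$ in decreasing order of $j$, with each block in natural order $c_{2j}c_{2j+1}$. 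Here $D_k=\{c_{2k}\}$ and $D_0=\{c_1\}$ are singletons. In the small case $k=2$ we have $u=\varepsilon$, $p=c_2c_1c_4c_3$ and $q=c_4c_2c_3c_1$, which fits this description.

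The verification then splits into three cases.
\begin{enumerate}
\item \emph{Edges $c_{2i-1}c_{2i}$.} These lie in the same $p$-block, so $c_{2i}$ precedes $c_{2i-1}$ in $p$. In $q$, $c_{2i}\in D_i$ and $c_{2i-1}\in D_{i-1}$, so $c_{2i}$ again comes first. The orders agree.
\item \emph{Edges $c_{2i}c_{2i+1}$.} In $p$, $c_{2i}\in B_i$ precedes $c_{2i+1}\in B_{i+1}$. In $q$ they lie in the same block $D_i$ in the order $c_{2i}c_{2i+1}$. The orders agree.
\item \emph{Non-edges $c_a,c_b$ with $b\ge a+2$.} In $p$, $c_a$ precedes $c_b$ because $\lceil a/2\rceil<\lceil b/2\rceil$. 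In $q$, $c_b$ precedes $c_a$ because $\lfloor b/2\rfloor>\lfloor a/2\rfloor$. The orders disagree, so $c_a$ and $c_b$ do not alternate.
\end{enumerate}

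The only real obstacle is bookkeeping: confirming that the indexing conventions of $u$ and $v$ in~(\ref{eq-1}) and~(\ref{eq-2}) really produce the block structures above, including at the ends. The point to check is that the singleton blocks $c_{2k}$ and $c_1$ of $q$ sit at the extremes, and that $u$ contributes only the blocks $B_2,\dots,B_{k-1}$.

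Both floor/ceiling inequalities used in case~3 follow from $b-a\ge 2$. Indeed, $\lfloor b/2\rfloor\ge\lfloor (a+2)/2\rfloor=\lfloor a/2\rfloor+1$, and similarly $\lceil b/2\rceil\ge\lceil a/2\rceil+1$.
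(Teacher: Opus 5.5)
Your proof is correct, and it argues differently from the paper. The paper uses induction on $k$. It checks the base case $c_2c_1c_4c_3\,c_4c_2c_3c_1$ directly. It then writes the permutations for $P_{2k+2}$ as $c_2c_1uc_{2k}c_{2k-1}c_{2k+2}c_{2k+1}$ and $c_{2k+2}c_{2k}c_{2k+1}vc_1$, notes that deleting $c_{2k+1},c_{2k+2}$ recovers $pq$, and verifies only the pairs involving the two new vertices.

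You instead describe both orders in closed form: $p$ sorts the vertices by $\lceil m/2\rceil$ increasingly and $q$ by $\lfloor m/2\rfloor$ decreasingly, with the prescribed order inside each two-element block. You then check all pairs at once, using the fact that two letters alternate in a concatenation of two permutations if and only if they have the same relative order in both.

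What your route buys: it needs no base case and no bookkeeping of how $u$ and $v$ grow with $k$. It also explains why the construction works: $p$ and $q$ form a realizer of the fence (zigzag) poset, whose comparabilities are exactly the path edges.

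What the paper's induction buys: it mirrors the incremental way these words are extended later in the paper, where vertices are added to an existing representant (e.g., Lemma~\ref{even_cycle_word} and Theorem~\ref{odd_parity}).
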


\begin{proof}
	Let $P_{2k} = \langle  c_1, c_2,  \cdots, c_{2k-1}, c_{2k} \rangle$. We prove the result by induction on $k$.  For $k =2$,  one can observe that the word $c_2c_1c_4 c_3 c_4 c_2c_3 c_1$ represents the path $P_4$ permutationally.
	Assume that the statement holds for $P_{2k}$. Consider the following two permutations on the vertices of the path $P_{2k+2}$:
	\begin{align*}
		p' &= c_2 c_1 u c_{2k} c_{2t-1} c_{2k+2} c_{2k+1}\\
		q' &= c_{2k+2}c_{2k} c_{2k+1} v c_1,	
	\end{align*}
	Removing the vertices $c_{2k+1}$ and $c_{2k+2}$ from $p'q'$ yields the word $pq$ which represents $P_{2k}$ by the induction hypothesis. It is sufficient to show that for $a \in \{c_1, c_2, \ldots, c_{2k+1}\}$ and $b \in \{c_{2k+1}, c_{2k+2}\}$, $a$ and $b$ are adjacent if and only if $a$ and $b$ alternate in $p'q'$.
	
	Suppose $a$ and $b$ are adjacent. If $b = c_{2k+1}$, then $a = c_{2k}$; and note that $abab \ll p'q'$. If $b = c_{2k+2}$, then $a = c_{2k+1} $; and $baba \ll p'q'$. Thus, $a$ and $b$ alternate in $p'q'$. Conversely, suppose $a$ and $b$ are non-adjacent. If $b = c_{2k+1}$, then $a = c_1, c_2, \ldots,$ or $c_{2k-1}$. In addtion, $a$ is also $c_{2k}$ when $b = c_{2k+2}$. In any case, we have $ab \ll p'$ and $ba \ll q'$. Thus, $a$ and $b$ do not alternate in $p'q'$.		
	Hence, the word $ p'q'$ represents the graph $P_{2k+2}$ permutationally. \qed 
\end{proof}

By removing $c_{2k}$ from $p$ and $q$ in Lemma \ref{path_word} one can observe that the resultant $pq$ represents the path $\langle  c_1, c_2,  \cdots, c_{2k-1} \rangle$ on odd number of vertices. However, for odd case, we need three permutations to represent the graph $M_c$ in Theorem \ref{even_parity}. From the permutation $p$ we construct the permutation $r = c_2u c_{2k-1} c_1$ by moving the vertex $c_1$ to the end of the permutation. It can be observed that $c_1$ still alternates with $c_2$ in the word $pqr$ and non-alternating of $c_1$ with any other vertex is taken care by the subword $pq$. Hence, we have the following lemma. 

\begin{lemma}\label{path_2k-1}
	For $k \ge 2$, the path $\langle  c_1, c_2,  \cdots, c_{2k-1}  \rangle$ on $2k-1$ vertices is represented  by the concatenation of the permutations $p = c_{2} c_1 u   c_{2k-1}$, $q = v c_1$ and $r = c_2u c_{2k-1} c_1$, where $u$ and $v$ are the words given in equations (\ref{eq-1}) and (\ref{eq-2}), respectively.
\end{lemma}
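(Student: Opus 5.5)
We argue by restricting the permutational representation of Lemma \ref{path_word} and then checking that appending the third permutation $r$ creates no new alternations and destroys none.

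\emph{Step 1 (the word $pq$).} Consider the path $P_{2k}=\langle c_1,\ldots,c_{2k}\rangle$ and its representing word $p_0q_0$ from Lemma \ref{path_word}, where $p_0=c_2c_1uc_{2k}c_{2k-1}$ and $q_0=c_{2k}vc_1$. Deleting all occurrences of a letter from a representing word represents the corresponding induced subgraph, since alternation of any remaining pair is unaffected. Deleting $c_{2k}$ gives exactly $p=c_2c_1uc_{2k-1}$ and $q=vc_1$. Hence $pq$ represents $P_{2k-1}=\langle c_1,\ldots,c_{2k-1}\rangle$. Moreover $p$ and $q$ are permutations of $\{c_1,\ldots,c_{2k-1}\}$, because $u$ uses $c_3,\ldots,c_{2k-2}$ and $v$ uses $c_2,\ldots,c_{2k-1}$. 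Consequently:
\begin{itemize}
\item every adjacent pair $\{a,b\}$ appears in the same relative order in $p$ and in $q$;
\item every non-adjacent pair appears in opposite orders in $p$ and $q$.
\end{itemize}

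\emph{Step 2 (non-adjacent pairs).} If $a,b$ do not alternate in $pq$, then they do not alternate in $pqr$, because $pq$ is a factor (hence a subword) of $pqr$. So non-adjacency is preserved automatically.

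\emph{Step 3 (adjacent pairs).} Let $a,b$ be adjacent. Then $a,b$ alternate in $pqr$ exactly when their order in $r$ agrees with their common order in $p$ and $q$. We split into two cases.
\begin{itemize}
\item Pairs avoiding $c_1$. Deleting $c_1$ from $r=c_2uc_{2k-1}c_1$ gives $c_2uc_{2k-1}$, which is also $p$ with $c_1$ deleted. So every pair not containing $c_1$ has the same order in $r$ as in $p$, and hence as in $q$.
\item Pairs containing $c_1$. The only neighbour of $c_1$ is $c_2$. Here $c_2$ precedes $c_1$ in $p$, in $q$ (which ends with $c_1$), and in $r$. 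So $c_1c_2$ is not ordered differently, and the subword on $\{c_1,c_2\}$ is $c_2c_1c_2c_1c_2c_1$.
\end{itemize}
Hence every edge alternates in $pqr$, and the lemma follows.

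The only point needing care is Step 1: confirming that $p$ and $q$, as given in the statement, are literally the restrictions of the permutations of Lemma \ref{path_word}, including the index ranges of $u$ and $v$. The remaining steps are direct order comparisons.
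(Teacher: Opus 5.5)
Your proof is correct and follows the same argument the paper gives in the paragraph preceding the lemma. You delete $c_{2k}$ from the representation in Lemma~\ref{path_word} to obtain $pq$. You note that $r$ is $p$ with $c_1$ moved to the end, so every edge keeps a common order in all three permutations, with $c_2$ before $c_1$ throughout. Non-alternation is then inherited from the prefix $pq$. Your version is more explicit than the paper's, which leaves implicit that $r$ agrees with $p$ on all pairs avoiding $c_1$.
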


\begin{remark}\label{3-uniform_path}
	Let $p$ and $q$ be permutations on the vertex set of a graph such that $pq$ represents the graph permutationally. It is evident that the word $pqq$ also represents the graph permutationally.
\end{remark}

\begin{lemma}\label{even_cycle_word}
	For $k \ge 1$, let $\langle 0', c_1, c_2, \ldots, c_{2k}, 0 \rangle$ be a path on $2k+2$ vertices. If $0$ and $0'$ are adjacent, then the corresponding even cycle $C$ is  represented by the concatenation of the permutations $w_1 = 0' c_2 c_1 uc_{2k} c_{2k-1} 0$, $w_2 = c_{2k} v 0' c_1 0$ and $w_3 = 0' c_{2k} 0 v c_1$, where $u$ and $v$ are the words given in equations (\ref{eq-1}) and (\ref{eq-2}), respectively.
\end{lemma}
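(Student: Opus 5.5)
The plan is to reduce to Lemma \ref{path_word} and then check the two extra vertices $0$ and $0'$ by hand. The cycle $C$ consists of the path $P_{2k}=\langle c_1,\ldots,c_{2k}\rangle$ together with the edges $0'c_1$, $c_{2k}0$ and $0'0$.

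\textbf{Step 1 (the path part).} Assume first $k\ge 2$, so that $u$ and $v$ are defined. Deleting $0$ and $0'$ from $w_1,w_2,w_3$ gives the following restrictions:
\begin{itemize}
\item $w_1$ restricts to $c_2c_1u\,c_{2k}c_{2k-1}=p$;
\item $w_2$ restricts to $c_{2k}v\,c_1=q$;
\item $w_3$ restricts to $c_{2k}v\,c_1=q$.
\end{itemize}
Here $p,q$ are the permutations of Lemma \ref{path_word}. By that lemma $pq$ represents $P_{2k}$ permutationally, and by Remark \ref{3-uniform_path} so does $pqq$. Hence for $a,b\in\{c_1,\ldots,c_{2k}\}$, alternation of $a,b$ in $w_1w_2w_3$ is equivalent to adjacency in $C$. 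It remains to treat pairs involving $0$ or $0'$. In a concatenation of permutations, two letters alternate if and only if their relative order is the same in every permutation, so each remaining check is a comparison of positions.

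\textbf{Step 2 (the vertex $0'$).}
\begin{itemize}
\item Pair $0',0$: $0'$ precedes $0$ in $w_1$, $w_2$ and $w_3$, so they alternate.
\item Pair $0',c_1$: $0'$ precedes $c_1$ in all three permutations, so they alternate.
\item Pair $0',c_i$ with $2\le i\le 2k$: $0'$ precedes $c_i$ in $w_1$. In $w_2$, every such $c_i$ lies in the prefix $c_{2k}v$, which precedes $0'$, because $v$ contains exactly $c_2,\ldots,c_{2k-1}$. So they do not alternate.
\end{itemize}

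\textbf{Step 3 (the vertex $0$).}
\begin{itemize}
\item Pair $0,c_{2k}$: $c_{2k}$ precedes $0$ in all three permutations, so they alternate.
\item Pair $0,c_i$ with $1\le i\le 2k-1$: $c_i$ precedes $0$ in $w_1$, while $0$ precedes $c_i$ in $w_3$, since $c_i$ appears in $v\,c_1$ after $0$. So they do not alternate.
\end{itemize}
This covers every pair, so $w_1w_2w_3$ represents $C$.

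\textbf{Main obstacle: the degenerate case $k=1$.} Here $u$ and $v$ are not given by (\ref{eq-1}) and (\ref{eq-2}), and the $c_{2k}c_{2k-1}$ in $w_1$ overlaps $c_2c_1$. I would interpret $u$ and $v$ as empty and read the words as $w_1=0'c_2c_1 0$, $w_2=c_2 0' c_1 0$, $w_3=0' c_2 0 c_1$. Then I would check directly that $0'0$, $0'c_1$, $c_1c_2$ and $c_20$ alternate, while $0'c_2$ and $0c_1$ do not, confirming the $4$-cycle. For $k\ge2$ the only care needed is the bookkeeping that the letters of $v$ are exactly $c_2,\ldots,c_{2k-1}$, which is immediate from (\ref{eq-2}).
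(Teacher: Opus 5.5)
Your proof is correct and follows the paper's argument. Like the paper, you restrict $w_1w_2w_3$ to $\{c_1,\ldots,c_{2k}\}$ to obtain $pqq$ (Lemma~\ref{path_word} and Remark~\ref{3-uniform_path}), then check pairs involving $0'$ via $w_1,w_2$ and pairs involving $0$ via $w_1,w_3$. Your explicit treatment of the degenerate case $k=1$ (with $u,v$ empty and the overlapping $c_2c_1$ collapsed) is a worthwhile addition that the paper leaves implicit, and it agrees with the convention the paper later adopts for length-three paths in Theorem~\ref{00'adj_fc}.
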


\begin{proof}
	Let $V = \{c_1,c_2,\ldots, c_{2k}\}$. Note that $C[V]$, the subgraph of $C$ induced by $V$, is the path $\langle c_1, c_2, \ldots, c_{2k} \rangle$. By Lemma \ref{path_word} and Remark \ref{3-uniform_path}, the word $pqq$ represents $C[V]$ permutationally.  It is evident that $w_1w_2w_3 |_{V} = pqq$.  It is sufficient to show that, for $a \in V \cup \{0'\}$ and $b \in \{0', 0\}$, $a$ and $b$ are adjacent if and only if $a$ and $b$ alternate in $w_1w_2w_3$. 
	
	Suppose $a$ and $b$ are adjacent. If $a = c_1$, then $b = 0'$ and $bababa \ll w_1w_2w_3$. If $a \in \{0', c_{2k}\}$, then $b = 0$ and $ababab \ll w_1w_2w_3$. Thus, $c$ and $d$ alternate in $w_1w_2w_3$.
	Conversely, suppose $a$ and $b$ are non-adjacent. If $b = 0'$, then $ba \ll w_1$ and $ab \ll w_2$. If $b = 0$, then $ab \ll w_1$ and $ba \ll w_3$. 
	Thus, $a$ and $b$ do  not alternate in $w_1w_2w_3$. Hence, the word $w_1w_2w_3$ represents the even cycle $C$ permutationally.	\qed
\end{proof}

\begin{figure}[t!]
	\centering	
	\begin{minipage}{.5\textwidth}
		\centering
		\begin{tikzpicture}[scale=0.8]
			%\node (F) at (-1,-4.5) [label=below:$M$] {};
			\vertex (0) at (0,0) [label=above:$0'$] {};  
			\vertex (1) at (-2,-1) [label=left:$a_1'$] {}; 
			\vertex (2) at (-1,-1) [label=left:$a_2'$] {}; 
			\vertex (3) at (0,-1) [label=right:$a_3'$] {}; 
			\vertex (4) at (2,-1) [label=right:$a_m'$] {}; 
			\vertex (5) at (1,-1) [label=above:$ $] {}; 
			\vertex (10) at (0,-4.25) [label=below:$0$] {};  
			\vertex (11) at (-2,-3.25) [label=left:$a_1$] {}; 
			\vertex (12) at (-1,-3.25) [label=left:$a_2$] {}; 
			\vertex (13) at (0,-3.25) [label=right:$a_3$] {}; 
			\vertex (14) at (2,-3.25) [label=right:$a_m$] {}; 
			\vertex (15) at (1,-3.25) [label=below:$ $] {}; 
			\vertex (21) at (-2,-1.75) [label=left:$ $] {}; 
			\vertex (22) at (-1,-1.75) [label=left:$ $] {}; 
			\vertex (23) at (0,-1.75) [label=right:$ $] {}; 
			\vertex (24) at (2,-1.75) [label=right:$ $] {}; 
			\vertex (25) at (1,-1.75) [label=right:$ $] {}; 
			\vertex (31) at (-2,-2.55) [label=left:$ $] {}; 
			\vertex (32) at (-1,-2.55) [label=left:$ $] {}; 
			\vertex (33) at (0,-2.55) [label=right:$ $] {}; 
			\vertex (34) at (2,-2.55) [label=right:$ $] {}; 
			\vertex (35) at (1,-2.55) [label=right:$ $] {}; 
			\path
			(0) edge (1)
			(0) edge (2)
			(0) edge (3)
			(0) edge (4) 
			(10) edge (11)
			(10) edge (12)
			(10) edge (13)
			(10) edge (14)
			(0) edge (5)
			(10) edge (15)		
			;
			\path[dashed]
			(11) edge (31)
			(12) edge (32)
			(13) edge (33)
			(14) edge (34)
			(15) edge (35)		
			(21) edge (31)
			(22) edge (32)
			(23) edge (33)
			(24) edge (34)
			(25) edge (35)
			(1) edge (21)
			(2) edge (22)
			(3) edge (23)
			(4) edge (24)
			(5) edge (25);	
		\end{tikzpicture}		
	\end{minipage}	
	\caption{Melon graph $M_c$}
	\label{file_graph}	
\end{figure}
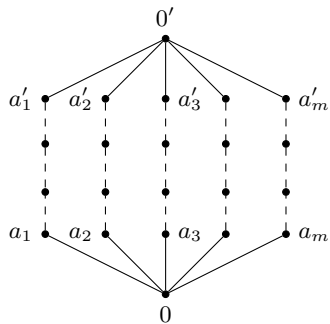

Let a melon graph $M_c = (E_1, \ldots, E_m)$ be a comparability graph.
We determine the \textit{prn} of $M_c$ in various cases in view of Theorem \ref{chr_com_file}. If $0$ and $0'$ are non-adjacent in $M_c$, we show that $\mathcal{R}^p(M_c) \le 3$ in two theorems, viz., Theorem \ref{odd_parity} and Theorem \ref{even_parity}, based on the parity of the constituent paths. In Theorem \ref{00'adj_fc}, we consider remaining cases in which $0$ and $0'$ are adjacent in $M_c$ and prove that $\mathcal{R}^p(M_c) \le 3$.  

Suppose $a_i, a_i'$ are the vertices adjacent to $0$ and $0'$, respectively, in the constituent paths $E_i$ (for $1 \le i\le m$) of $M_c$ as shown in Fig. \ref{file_graph}. 

\begin{theorem}\label{odd_parity}
	If all the constituent paths of a melon graph $M_c$ are of odd parity and of length at least three, then $\mathcal{R}^p(M_c) \le 3$. 
\end{theorem}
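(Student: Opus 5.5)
We plan to build the three permutations by gluing copies of the even-cycle permutations of Lemma \ref{even_cycle_word}, one copy for each constituent path, and then to reposition the endpoints $0$ and $0'$ by hand. Since every $E_i$ has odd length at least three, its internal vertices form a path on an even number $2k_i \ge 2$ of vertices. We write it as $\langle c^i_1, \ldots, c^i_{2k_i}\rangle$ with $c^i_1 = a_i'$ (adjacent to $0'$) and $c^i_{2k_i} = a_i$ (adjacent to $0$). Let $p_i, q_i$ be the permutations of Lemma \ref{path_word} on this path; for $k_i = 1$ we simply take $p_i = c^i_2c^i_1$ and $q_i = c^i_2c^i_1$. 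By Remark \ref{3-uniform_path}, $p_iq_iq_i$ represents the internal path of $E_i$. The idea is to concatenate the blocks $p_1, \ldots, p_m$ in increasing order in the first permutation and in decreasing order in the other two. Then any two internal vertices from different paths appear as $ab$ in the first permutation and $ba$ in the second, so they do not alternate.

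Concretely, we take the following three permutations, where $q_i^-$ denotes $q_i$ with its last letter $c^i_1$ deleted and $q_i^{\sharp}$ denotes $q_i$ with its first letter $c^i_{2k_i}$ deleted:
\begin{align*}
w_1 &= 0'\, p_1 p_2 \cdots p_m\, 0,\\
w_2 &= q_m^- \cdots q_1^-\; 0\; 0'\; c^m_1 \cdots c^1_1,\\
w_3 &= 0'\; c^m_{2k_m} \cdots c^1_{2k_1}\; 0\; q_m^{\sharp} \cdots q_1^{\sharp}.
\end{align*}
Moving the $c^i_1$ to the end of $w_2$, and the $c^i_{2k_i}$ to the front of $w_3$, only moves them past vertices of other blocks. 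So $w_1w_2w_3$ restricted to the internal vertices of $E_i$ is still $p_iq_iq_i$, and the representation of each internal path is preserved. The word has the same shape as in Lemma \ref{even_cycle_word}, except that in $w_2$ the vertex $0$ now precedes $0'$.

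The verification then splits into four checks.
\begin{enumerate}
\item \emph{Cross pairs of internal vertices.} Take internal vertices from $E_i$ and $E_j$ with $i<j$. The block order gives $ab$ in $w_1$, while the reversed order of $w_2$ or $w_3$, together with the relocated letters, gives $ba$ in one of them. The relocated letter $c^i_1$ needs care: it sits after everything in $w_2$, and in $w_3$ it lies in block $i$, which comes before the blocks $j<i$. Symmetrically for $c^i_{2k_i}$.
\item \emph{The vertex $0'$.} It is first in $w_1$ and $w_3$, and in $w_2$ it lies after all vertices except the $c^i_1$. Hence it alternates exactly with the $c^i_1$: for these we get $0'c\,|\,0'c\,|\,0'c$, and for every other vertex $x$ we get $0'x\,|\,x0'\,|\,0'x$.
\item \emph{The vertex $0$.} For a neighbour $c^i_{2k_i}$ we get $c0\,|\,c0\,|\,c0$. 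For any other internal $x$ we get $x0$ in $w_1$, and then either $x0$ in $w_2$ and $0x$ in $w_3$, or, when $x = c^i_1$, $0x$ in $w_2$. In both cases the two do not alternate.
\item \emph{The pair $0,0'$.} We get $0'0\,|\,00'\,|\,0'0$, so they do not alternate, as required because $0$ and $0'$ are non-adjacent here.
\end{enumerate}

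The main obstacle I expect is exactly this interaction between $0$ and $0'$. Lemma \ref{even_cycle_word} keeps them alternating, so it cannot be copied verbatim. The fix is to place $0$ immediately before $0'$ in $w_2$. After that, I will re-check that no pair involving a relocated $c^i_1$ or $c^i_{2k_i}$ accidentally alternates, and that the degenerate blocks with $k_i = 1$ behave consistently.
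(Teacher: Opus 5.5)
\textbf{There is a genuine gap in your first check.} Once $m\ge 2$, the word $w_1w_2w_3$ does not represent $M_c$.

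Fix $j<i$. Let $x=c^j_{2k_j}$, the neighbour $a_j$ of $0$ on $E_j$, and $y=c^i_1$, the neighbour $a_i'$ of $0'$ on $E_i$. Then $x$ precedes $y$ in all three permutations:
\begin{itemize}
\item In $w_1$, block $j$ precedes block $i$.
\item In $w_2$, $x$ is the first letter of $q_j^-$, which lies before $0\,0'$, while $y$ sits in the tail after $0'$.
\item In $w_3$, $x$ is in the front segment before $0$, while $y$ is the last letter of $q_i^{\sharp}$, after $0$.
\end{itemize}
So $x$ and $y$ alternate, yet they are internal vertices of different constituent paths and hence non-adjacent. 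You wrote that in $w_3$ the letter $c^i_1$ ``lies in block $i$, which comes before the blocks $j<i$''. That holds for every letter of block $j$ except the one you moved to the front of $w_3$. Your ``symmetrically'' case hides the same collision between the two families of relocated letters.

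The smallest instance is $M_c=C_6$ ($m=2$, both paths of length three). Your permutations are $w_1=0'\,c^1_2c^1_1\,c^2_2c^2_1\,0$, $w_2=c^2_2c^1_2\,0\,0'\,c^2_1c^1_1$ and $w_3=0'\,c^2_2c^1_2\,0\,c^2_1c^1_1$. Here $c^1_2$ precedes $c^2_1$ in all three, which creates the chord $c^1_2c^2_1$.

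This is structural, not a bookkeeping slip. The even-cycle template of Lemma~\ref{even_cycle_word} pulls $c_1$ and $c_{2k}$ out of their blocks. That is harmless for a single path. With several paths it puts every $a_j$ before every $a_i'$ in both $w_2$ and $w_3$. In $w_2$ the order $a_j<0<0'<a_i'$ is in fact forced by the edges $a_j0$, $0'a_i'$ and the non-edge $00'$, given your $w_1$ and $w_3$.

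\textbf{How the paper avoids it.} It does no relocation at all. Each whole constituent path $E_i=\langle 0',a_i',\dots,a_i,0\rangle$ has an even number of vertices. So Lemma~\ref{path_word} applies with $c_1=0'$ and $c_{2k}=0$, giving $p_i=a_i'\,0'\,u_i\,0\,a_i$ and $q_i=0\,v_i\,0'$, where $v_i$ contains every internal vertex of $E_i$. The three permutations are $a_1'\cdots a_m'\,0'\,u_m\cdots u_1\,0\,a_m\cdots a_1$, $0\,v_1\cdots v_m\,0'$ and $0\,v_m\cdots v_1\,0'$.
\begin{itemize}
\item Their restriction to each $E_i$ is $p_iq_iq_i$. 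This settles all pairs inside one path, including pairs involving $0$ or $0'$ and the non-edge $00'$.
\item In the last two permutations, $0$ and $0'$ sit at the extreme ends and all internal vertices lie in the $v$-blocks, listed in opposite orders. So any two internal vertices from different paths appear in opposite orders there.
\end{itemize}
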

\begin{proof}
	Note that the endpoints  $0'$ and $0$ are non-adjacent and hence each constituent path is an induced subgraph of $M_c$. 
	Since all constituent paths are of odd parity, each $E_i$ is a path on even number of vertices. Consider the permutations $p_i = a_i' 0' u_i 0 a_i$  and $q_i = 0 v_i 0' $, where $u_i$ and $v_i$ are permutations on the remaining intermediate vertices of $E_i$ as stated in Lemma \ref{path_word}, and note that $p_iq_iq_i$ represents $E_i$ permutationally  (by Remark \ref{3-uniform_path}). 
	Using $p_i$'s and $q_i$'s, construct the following three permutations on the vertices of $M_c$: 
	\begin{align*}
		p_1' &= a_1' \cdots a_i' \cdots a_m' 0' u_m  \cdots u_i \cdots u_1 0 a_m \cdots a_i \cdots a_1\\
		p_2' &= 0 v_1 \cdots v_i \cdots v_m 0'\\
		p_3' &= 0 v_m \cdots v_i \cdots v_1 0',
	\end{align*} (for clarity, $1 \le i \le m$).
	Using induction on the number of constituent paths $m$ in $M_c$, we show that two vertices are adjacent in $M_c$ if and only if they alternate in $p_1'p_2'p_3'$.	While the statement is true for $m=1$ by Lemma \ref{path_word} and Remark \ref{3-uniform_path},  assume the statement holds if $M_c$ has at most $m - 1$ constituent paths. When $M_c$ has $m$ constituent paths, remove the intermediate vertices of $E_{m}$ from $p_1'p_2'p_3'$. By the induction hypothesis, the resulting word represents the melon graph $(E_1, E_2,\ldots, E_{m-1})$. It is also evident that $(p_1'p_2'p_3')|_{E_m}$ represents $E_m$ permutationally. It remains to show that, for $a \in E_1 \cup E_2 \cup \ldots \cup E_{m-1}$ and $b \in E_{m} \setminus \{0,0'\}$, $a$ and $b$ are adjacent in $M_c$ if and only if $a$ and $b$ alternate in $p_1'p_2'p_3'$.
	
	Suppose  $a$ and $b$ are adjacent in $M_c$. Clearly, $a \in \{0',0\}$ and $b \in \{a_{m}', a_{m}\}$. If $a = 0$, then $b = a_{m}$; in this case, we have $ab \ll p_1'$, $ab \ll 0 v_{m} \ll p_2'$, and $a b \ll 0 v_{m} \ll p_3'$. In case $a = 0'$, then $b = a_{m}'$ and $ba \ll p_1'$, $ba \ll  v_{m}0' \ll p_2'$, and $ba \ll  v_{m}0' \ll p_3'$. Hence, in any case, $a$ and $b$ alternate in $p_1'p_2'p_3'$.
	
	Conversely, suppose that $a$ and $b$ are not adjacent in $M_c$. Then, $a$ must be an intermediate vertex of some path $E_i$ ($1 \le i \le m-1$) so that $ab \ll v_i v_{m} \ll p_2'$ and $ba \ll v_{m} v_{i} \ll p_3'$. Thus, $a$ and $b$ do not alternate in $p_1'p_2'p_3'$. Hence, $p_1'p_2'p_3'$ represents $M_c$ permutationally so that $\mathcal{R}^p(M_c) \le 3$.
	\qed
\end{proof}

\begin{theorem}\label{even_parity}
	If all the constituent paths of a melon graph $M_c$ are of even parity, then $\mathcal{R}^p(M_c) \le 3$.
\end{theorem}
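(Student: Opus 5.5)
The plan is to imitate the proof of Theorem \ref{odd_parity}, with Lemma \ref{path_2k-1} in place of Lemma \ref{path_word}. Since all constituent paths have even length and $M_c \ne K_2$, the endpoints $0$ and $0'$ are non-adjacent. So every $E_i$ is an induced path on an odd number $2k_i-1$ of vertices, and Lemma \ref{path_2k-1} supplies three permutations $p_i q_i r_i$ representing $E_i$. I would label each path as $c_1 = 0'$, $c_2 = a_i'$, \ldots, $c_{2k_i-2} = a_i$, $c_{2k_i-1} = 0$. Lemma \ref{path_2k-1} then gives $p_i = a_i'\,0'\,u_i\,0$, $q_i = v_i\,0'$ and $r_i = a_i'\,u_i\,0\,0'$. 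Here $v_i$ begins with $a_i\,0$, so $q_i = a_i\,0\,\bar v_i\,0'$, where $\bar v_i$ lists the remaining intermediate vertices, ending with $a_i' c_3$.

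Constituent paths of length two, $\langle 0, x_i, 0'\rangle$, do not fit this labelling because $a_i = a_i'$. I would treat them separately, as in the $K_{2,m}$ case, by giving them their own fixed block in each permutation. In every permutation $x_i$ lies between $0$ and $0'$, with $0$ before $0'$ in one permutation and $0'$ before $0$ in the others, chosen consistently so that $x_i$ alternates with both $0$ and $0'$. I also want the relative order of $x_i$ and $x_j$ to flip between two of the permutations.

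Next I would merge the per-path permutations into three global permutations. Within each, the intermediate blocks of the different paths are concatenated, and their order is reversed in at least one pair of permutations, exactly as $v_1\cdots v_m$ and $v_m\cdots v_1$ are used in Theorem \ref{odd_parity}. Concretely, I would try
\[
p_1' = a_1'\cdots a_m'\,0'\,u_m\cdots u_1\,0,\qquad p_2' = a_m\cdots a_1\,0\,\bar v_1\cdots\bar v_m\,0',\qquad p_3' = a_1'\cdots a_m'\,u_1\cdots u_m\,0\,0',
\]
with the length-two vertices $x_i$ inserted between $0$ and $0'$ in appropriate orders. I would then check that each restriction $(p_1'p_2'p_3')|_{E_i}$ equals $p_iq_ir_i$, so that Lemma \ref{path_2k-1} gives the correct adjacencies inside each path.

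Finally, I would run the same induction on $m$ as in Theorem \ref{odd_parity}. Removing the intermediate vertices of $E_m$ yields the word for $(E_1,\dots,E_{m-1})$, and the remaining task is to compare a vertex $a$ from the earlier paths with an intermediate vertex $b$ of $E_m$. If $a\in\{0,0'\}$ and $b\in\{a_m,a_m'\}$, alternation follows from the restriction to $E_m$. If $a$ is an intermediate vertex of some $E_i$ with $i<m$, they must fail to alternate, and for this I need $a$ before $b$ in one permutation and after it in another.

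I expect the main obstacle to be this last step. The block reversal has to be arranged so that every such pair really appears in both orders, including the awkward vertices $a_i'$ and $a_i$ that sit outside the blocks in $p_1'$ and $p_2'$. The position of $0$ inside $v_i$ also breaks the clean blocks that worked in the odd case. If the choice above fails, I would first try reversing the order of the prefixes $a_1'\cdots a_m'$ in $p_3'$ relative to $p_1'$, and similarly placing the $a_i$'s in opposite orders, before reworking the blocks.
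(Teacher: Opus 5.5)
Your framework (restrict to each $E_i$, apply Lemma~\ref{path_2k-1}, induct on $m$, then check cross-path pairs) is the same as the paper's. However, the permutations you propose fail exactly at the step you flag, and that step is the substance of the proof.

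\textbf{The cross-path step.} Take $i<j$ with $E_i,E_j$ of length at least four, and write $z_j$ for the vertex $c_3$ of $E_j$. In your $p_1',p_2',p_3'$, the vertex $a_i'$ precedes both $a_j'$ and $z_j$ in all three permutations. This is because $a_i'$ lies in the prefix of $p_1'$ and $p_3'$, and $\bar v_i$ comes before $\bar v_j$ in $p_2'$. So these non-adjacent pairs alternate.

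Your fallback cannot repair this. Suppose $a_1',\dots,a_m'$ precede all $u$-blocks in both $p_1'$ and $p_3'$. Then the pairs $\{a_i',z_j\}$ and $\{a_j',z_i\}$ can only be flipped in $p_2'$, which requires $z_j<a_i'$ and $z_i<a_j'$ there. But $p_2'|_{E_i}=q_i$ forces $a_i'<z_i$, since $\bar v_i$ ends in $a_i'c_3$, and likewise $a_j'<z_j$. This gives the cycle $z_j<a_i'<z_i<a_j'<z_j$.

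The missing idea is to break up the $a'$-prefix in the third permutation by attaching each $a_i'$ to its own block. The paper takes
\[
p_1'=a_1'\cdots a_m'\,0'\,u_1\cdots u_m\,0,\qquad p_2'=a_1\cdots a_m\,0\,\bar v_1\cdots\bar v_m\,0',\qquad p_3'=a_m'u_m\cdots a_1'u_1\,0\,0'.
\]
For $i<j$, $p_3'$ puts every intermediate vertex of $E_j$ before every intermediate vertex of $E_i$. Meanwhile $p_1'$ puts the $E_i$-vertex first, except when the $E_j$-vertex is $a_j'$ and the $E_i$-vertex lies in $u_i$. That remaining case is settled by $p_2'$.

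\textbf{Length-two paths.} Your placement of the middle vertices $x_i$ is impossible as stated. Since $0$ and $0'$ are non-adjacent, their relative order must flip between some two permutations. If $x_i$ lies between them in both, then $x_i$ appears on both sides of $0$ (and of $0'$), so it alternates with neither. A common neighbour of $0$ and $0'$ must lie on the same side of both in every permutation.

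The paper needs no separate treatment here. A length-two path is the case $k=2$ of Lemma~\ref{path_2k-1}, with $a_i=a_i'$, $u_i=\varepsilon$ and $v_i=a_i0$. Its middle vertex therefore sits in the prefixes of $p_1'$ and $p_2'$ and forms a singleton block in $p_3'$.

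This is also why the paper first orders the paths so that $|E_1|\le\cdots\le|E_m|$. If a length-two path $E_j$ came after a longer $E_i$, then $a_j$ would precede $z_i$ in all three permutations above, and these non-adjacent vertices would alternate.
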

\begin{proof}
	Without loss of generality, let $M_c = (E_1,E_2, \ldots, E_m)$ such that $|E_1| \le |E_2|\le \cdots \le |E_m|$, where $|E_i|$ is the number of vertices of $E_i$. For $1 \le i \le m$, $E_i$ is of length two then $a_i' = a_i$ and the word $a_i 0'0 a_i 0  0' a_i 0 0' $ represents the path $E_i$. If $E_i$ is of length at least four, consider the permutations $p_i = a_i' 0' u_i 0$  and $q_i = v_i 0'$, and $r_i = a_i' u_i 0 0'$, where $u_i$ and $v_i$ are permutations on the remaining intermediate vertices of $E_i$ as stated in Lemma \ref{path_2k-1}, and note that $p_iq_ir_i$ represents $E_i$ permutationally. Let $v_i = a_i0v_i'$, as per the structure of $v_i$. Construct the following three permutations on the vertices of $M_c$: 
	\begin{align*}
		p_1' &= a_1' \cdots a_i' \cdots a_m' 0' u_1  \cdots u_i \cdots  u_m 0 \\
		p_2' &= a_1  \cdots a_i  \cdots a_m  0  v_1'  \cdots v_i' \cdots v_m' 0'\\
		p_3' &= a_m'u_m  \cdots a_i' u_i \cdots a_1' u_1'  0 0',
	\end{align*} (for clarity, $1 \le i \le m$).
	If $E_i$ if of length two, then in the permutations $p_1'$ and $p_3'$, put $a_i = \epsilon$, and in $p_2'$, put $a_i' = \epsilon$. 
	
	Using induction on $m$, we show that two vertices are adjacent in $M_c$ if and only if they alternate in $p_1'p_2'p_3'$. 
	The statement holds for $m = 1$ by Lemma \ref{path_2k-1}. Assume that the statement holds if $M_c$ has at most $m - 1$ constituent paths. As shown in the proof of Theorem \ref{odd_parity}, it is sufficient to show that, for $a \in E_1 \cup E_2 \cup \ldots \cup E_{m-1}$ and $b \in E_{m} \setminus \{0,0'\}$, $a$ and $b$ are adjacent in $M_c$ if and only if $a$ and $b$ alternate in $p_1'p_2'p_3'$.
	
	Suppose  $a$ and $b$ are adjacent in $M_c$. Then, clearly $a \in \{0',0\}$ and $b \in \{a_{m}', a_{m}\}$. It can be observed that $ba\ll p_1'$, $ba \ll p_2'$, and $ba \ll  p_3'$ so that $a$ and $b$ alternate in $p_1'p_2'p_3'$.		
	
	Conversely, suppose that $a$ and $b$ are not adjacent in $M_c$. Then, $a$ must be an intermediate vertex of some path $E_i$ ($1 \le i \le m-1$). We show that  $a$ and $b$ do not alternate in $p_1'p_2'p_3'$ in the following three cases:
	\begin{itemize}
		\item Case 1: $b = a_{m}'$. If $a =  a_{i}'$, then $ab \ll p_1'$ and $ba \ll p_3'$; otherwise (i.e., $a \ne a_{i}'$), $ba \ll p_1'$ and $ab \ll p_2'$. 
		\item Case 2: $b = a_{m} $. If $a =  a_{i}$, then $ab \ll p_2'$ and $ba \ll p_3'$; otherwise, $ab \ll p_1'$ and $ba \ll p_2'$. 
		\item Case 3: $ b \not \in \{a_{m}, a_{m}'\}$. It can be observed that $ab \ll p_1'$ and $ba \ll p_3'$. 
	\end{itemize}
	Hence, $p_1'p_2'p_3'$ represents $M_c$ permutationally so that $\mathcal{R}^p(M_c) \le 3$.	\qed
\end{proof}

\begin{theorem}\label{00'adj_fc}
	Let a melon graph $M_c$ be a comparability graph. If the endpoints $0$ and $0'$ are adjacent in $M_c$, then $\mathcal{R}^p(M_c) \le 3$.
\end{theorem}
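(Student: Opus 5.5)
We construct three permutations directly, extending the words of Lemma~\ref{even_cycle_word}, much as Theorems~\ref{odd_parity} and \ref{even_parity} extend Lemma~\ref{path_word}. By Theorem~\ref{chr_com_file}, since $0$ and $0'$ are adjacent, every constituent path of $M_c$ other than the edge $\langle 0,0'\rangle$ falls into one of two kinds. The first kind are the odd paths $E_1,\ldots,E_s$ of length at least three. The second kind are the paths $\langle 0,x_j,0'\rangle$ of length two, for $1\le j\le t$; these occur only when condition 2 of Theorem~\ref{chr_com_file} holds, and otherwise there are none. Each odd path $E_i$ has an even number $2k_i$ of intermediate vertices. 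We write them $c^i_1=a_i',c^i_2,\ldots,c^i_{2k_i}=a_i$, so that $\langle 0',c^i_1,\ldots,c^i_{2k_i},0\rangle$ is exactly the setting of Lemma~\ref{even_cycle_word}. Let $u_i,v_i$ denote the words (\ref{eq-1}) and (\ref{eq-2}) for $E_i$, and set $\alpha_i=c^i_2c^i_1u_ic^i_{2k_i}c^i_{2k_i-1}$ and $\beta_i=c^i_{2k_i}v_i$.

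We propose the permutations
\begin{align*}
w_1 &= 0'\, x_1\cdots x_t\, \alpha_1\cdots\alpha_s\, 0,\\
w_2 &= \beta_1\cdots\beta_s\, 0'\, a_1'\cdots a_s'\, x_t\cdots x_1\, 0,\\
w_3 &= 0'\, a_1\cdots a_s\, x_1\cdots x_t\, 0\, (v_sc^s_1)\cdots(v_1c^1_1).
\end{align*}
Here $\beta_i$ is $w_2$ of Lemma~\ref{even_cycle_word} with $0'c_1 0$ deleted, so the letters of $E_i$ in $\beta_1\cdots\beta_s$ read $c^i_{2k_i}v_i$. The $a_i'$ then appear later in $w_2$, and in $w_3$ the $a_i=c^i_{2k_i}$ come first, with the remaining letters ordered by the blocks $v_ic^i_1$. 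With these choices the restriction of $w_1w_2w_3$ to $\{0,0'\}\cup E_i$ is exactly the word of Lemma~\ref{even_cycle_word}. Hence each cycle $(E_i,\langle 0,0'\rangle)$ is represented correctly. The $t=0$ case (all paths odd) and the $s=0$ case are special cases of this construction.

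It remains to check pairs of vertices lying in different paths. We do this in the style of the inductive arguments of Theorems~\ref{odd_parity} and \ref{even_parity}.

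\textbf{Two intermediate vertices of distinct odd paths $E_i,E_j$ with $i<j$.} Their relative order in $w_1$ is fixed by the block order $\alpha_i\alpha_j$. The delicate pairs are those involving $a_i'$ or $a_j'$ in $w_2$, or $a_i$ or $a_j$ in $w_3$. For each such pair we must exhibit one permutation where the order is opposite to $w_1$. The reversed block order in the tail of $w_3$ covers most pairs. The remaining pairs are covered by the position of the $a'$'s after $0'$ in $w_2$ and of the $a$'s before $0$ in $w_3$. If some configuration fails, for instance $a_j'$ against an inner vertex of $E_i$, we will reorder the sequence $a_1'\cdots a_s'$ or $a_1\cdots a_s$ to reverse it.

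\textbf{Vertices $x_j$.} In every permutation $x_j$ lies between $0'$ and $0$, so $x_j$ alternates with both, as required. Against an intermediate vertex $c$ of an odd path, $x_j$ precedes $c$ in $w_1$. Every $c$ other than the $a_i'$ precedes $x_j$ in $w_2$. Each $a_i'$ precedes $x_j$ in $w_2$ as well, and each $a_i$ precedes $x_j$ in $w_3$. Hence no such pair alternates. Two vertices $x_j,x_l$ appear in opposite orders in $w_1$ and $w_2$, so they do not alternate either.

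\textbf{Main obstacle.} The main work is the case bookkeeping for cross pairs between two odd paths involving $a_i'$ and $a_i$. If that check fails, we will fall back on an induction on $s$, as in Theorem~\ref{odd_parity}. In that induction we would insert each new path's blocks at the front of $w_1$ and $w_2$ and at the back of $w_3$.
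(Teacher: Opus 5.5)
\textbf{The construction fails once $s\ge 2$.} Your reduction to single cycles and your handling of the $x_j$ are fine. The cross-pair check you postponed, however, does not go through, and the failure is not confined to a few pairs.

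Take $i<j$ with $k_i,k_j\ge 2$. Then $a_i=c^i_{2k_i}$ precedes every intermediate vertex of $E_j$ in all three permutations:
in $w_1$ because $\alpha_i$ precedes $\alpha_j$;
in $w_2$ because $a_i$ is the first letter of $\beta_i$, which precedes $\beta_j$, $0'$ and the list $a_1'\cdots a_s'$;
in $w_3$ because $a_i$ precedes $a_j$ in the prefix, and every other vertex of $E_j$ lies in the tail.

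So $a_i$ alternates with all intermediate vertices of $E_j$, none of which is adjacent to it. For example, with two paths of length five, $c^1_4$ alternates with each of $c^2_1,\dots,c^2_4$.

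Your repairs cannot fix this. Permuting the lists $a_1\cdots a_s$ or $a_1'\cdots a_s'$ never moves $a_i$ past $0'$ in $w_2$ or out of the prefix of $w_3$. The induction that inserts blocks at the front of $w_1,w_2$ and the back of $w_3$ is the same construction with the indices reversed.

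The obstruction is structural. Your $w_2,w_3$ restrict to the words of Lemma~\ref{even_cycle_word}, so every $a_i$ precedes every $a_j'$ in both. Hence $w_1$ must put every $a_j'$ before every $a_i$ for $i\neq j$. More precisely, the cross pairs not already separated by $w_2$ versus $w_3$ are exactly these: for $i<j$, $w_1$ needs $a_i'$ before $E_j\setminus\{a_j'\}$ and $a_i$ after all of $E_j$. No concatenation of whole blocks satisfies both.

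\textbf{The missing idea is the paper's first permutation.} It splits each block into its $0'$-end, its middle and its $0$-end, and groups these separately. In your indexing it reads
\[
w_1=0'\,x_1\cdots x_t\,(c^1_2c^1_1)\cdots(c^s_2c^s_1)\,u_s\cdots u_1\,(c^s_{2k_s}c^s_{2k_s-1})\cdots(c^1_{2k_1}c^1_{2k_1-1})\,0 .
\]
This $w_1$ has three properties you need:
\begin{itemize}
\item its restriction to each cycle is still the first word of Lemma~\ref{even_cycle_word};
\item it meets both requirements above;
\item with the $x_j$ deleted, your $w_2,w_3$ become, after reindexing $i\mapsto s+1-i$, exactly the paper's $p_2',p_3'$ from Part~I.
\end{itemize}
Your placement of the $x_j$ still works, although the paper's Part~II places them differently.

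\textbf{Pages of length three need separate care.} Your $\alpha_i$ is undefined for $k_i=1$. Moreover, for a page $\langle 0',a_i',a_i,0\rangle$ your $w_2,w_3$ put $a_i$ before $a_i'$, so adjacency forces the same order in $w_1$. Two such pages would then require $a_j'<a_i<a_i'<a_j<a_j'$ in $w_1$. So no $w_1$ at all is compatible with these $w_2,w_3$, already for the book graph $B_2$. The paper's convention for length-three pages meets the same obstruction: in $B_2$ its three permutations make the neighbour of $0$ on one page alternate with the neighbour of $0'$ on the other. Those $4$-cycles must therefore be represented differently.
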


\begin{proof}
	Without loss of generality, let $M_c = (E_1,E_2, \ldots, E_m)$ such that $|E_1| \ge |E_2|\ge \cdots \ge |E_m|$, where $|E_i|$ is the number of vertices of $E_i$.  Since $E_m$ is of length one, note that the endpoints $0$ and $0'$ are adjacent in $M_c$. For $i \ge 2$, if a constituent path $E_i$ is of odd length, then the induced subgraph $M_c[E_i]$ is an even cycle. If $E_i$ is of even length, then it should be of length two (by Theorem \ref{chr_com_file}) so that $M_c[E_i]$ is a triangle. In the construction of permutations on the vertices of $M_c$, we require the intermediate vertices, say $x_i$ and $y_i$, adjacent to $a_i$ and $a_i'$ in $E_i$, respectively. 
	
	\noindent\textbf{Part-I.} Suppose all the constituent paths of $M_c$ are of odd parity.
	For $1 \le i < m$, note that $E_i$ is of length at least three. Consider the permutations $w_1 = 0'y_ia_i' u_i a_i x_i 0$, $w_2 = a_i v_i 0' a_i' 0$ and $w_3 = 0' a_i 0 v_i a_i'$, where $u_i$ and $v_i$ are permutations on the remaining intermediate vertices of $E_i$, as stated in Lemma \ref{even_cycle_word}. 
	Construct the following three permutations on the vertices of $M_c$: 
	\begin{align*}
		p_1' &= 0' y_{m-1} a_{m-1}' \cdots y_i a_i' \cdots y_1 a_1' u_{1} \cdots u_i  \cdots  u_{m-1}    a_1 x_1 \cdots a_i x_i   \cdots  a_{m-1} x_{m-1} 0 \\
		p_2' &= a_{m-1}v_{m-1}        \cdots  a_iv_i \cdots a_1 v_1    0'  a_{m-1}'    \cdots a_{i}' \cdots a_1'0\\
		p_3' &= 0'a_{m-1} \cdots a_{i} \cdots a_1 0 v_{1}a_{1}' \cdots v_ia_i' \cdots      v_{m-1} a_{m-1}',
	\end{align*} (for clarity, $1 \le i \le m-1$).
	If $E_i$ if of length three, note that $x_i = a_i'$ and $y_i = a_i$. Then, put $v_i = \epsilon$ in the permutations $p_2'$ and $p_3'$, and put $a_i' = y_i = \epsilon$ in $p_1'$. 
	
	Using induction on $m$, we show that two vertices are adjacent in $M_c$ if and only if they alternate in $p_1'p_2'p_3'$. 
	The statement holds for $m = 1$ by Lemma \ref{even_cycle_word}. Assume that the statement holds if $M_c$ has at most $m - 1$ constituent paths. It is sufficient to show that, for $a \in  E_2 \cup \ldots \cup E_{m-1} \cup E_m$ and $b \in E_{1} \setminus \{0,0'\}$, $a$ and $b$ are adjacent in $M_c$ if and only if $a$ and $b$ alternate in $p_1'p_2'p_3'$.
	
	Suppose  $a$ and $b$ are adjacent in $M_c$. Then, clearly $a \in \{0',0\}$ and $b \in \{a_{1}', a_{1}\}$. If $a = 0$, then $b = a_{1}$; in this case, we have $ba \ll p_1'$, $ba \ll p_2'$, and $ba \ll  p_3'$. In case $a = 0'$, then $b = a_{1}'$ and $ab \ll p_1'$, $ab \ll p_2'$, and $ab \ll p_3'$. Hence, in any case, $a$ and $b$ alternate in $p_1'p_2'p_3'$.

	Conversely, suppose that $a$ and $b$ are not adjacent in $M_c$. Then, $a$ must be an intermediate vertex of some path $E_i$ ($2 \le i \le m-1$). We show that  $a$ and $b$ do not alternate in $p_1'p_2'p_3'$ in the following three cases:
	
	\begin{itemize}
		\item $b \in \{a_1', y_1\}$. If $a \in \{a_i', y_i\}$, then $ab \ll p_1'$ and $ba \ll p_3'$. If $a \not  \in \{a_i', y_i\}$, then $ba \ll p_1'$ and $ab \ll p_2'$.
		
		\item $b \in \{a_1,x_1\}$. If $a  \in \{a_i, x_i\}$, then $ba \ll p_1'$ and $ab \ll p_2'$. Else if $a  \not \in \{a_i, x_i\}$, $ab \ll p_1'$ and $ba \ll p_3'$.
		
		\item $b \not \in \{a_1,a_1',x_1,y_1\}$, i.e., $b \ll u_1$ and $b \ll v_1$. If $a  \in \{a_i', y_i\}$, then $ab \ll p_1'$ and $ba \ll p_3'$. Else if $a \not \in \{a_i',y_i'\}$, then $ba \ll p_1'$and $ab \ll p_2'$.
	\end{itemize}
	Hence, if all the constituent paths of $M_c$ are of odd parity,  then $M_c$ can be represented by the word $p_1'p_2'p_3'$. 
	
	\noindent\textbf{Part-II.} Suppose that the melon graph $M_c$ has constituent paths of even length. Let $t$ be the least index such that $E_t$ is of length two. Then, the constituent paths  $E_t, E_{t+1} \ldots, E_{m-1}$ are of length two, with intermediate vertices, say $a_t, a_{t+1}, \ldots, a_{m-1}$, respectively. Construct the following three permutations on the vertices of $M_c$:
	\begin{align*}
		q_1' &= 0' y_{t-1} a_{t-1}' \cdots y_j a_j' \cdots y_1 a_1'  u_{1} \cdots u_j  \cdots  u_{t-1}    a_1 x_1 \cdots a_j x_j   \cdots  a_{t-1} x_{t-1}  a_{m-1} \cdots a_{i} \cdots a_{t} 0 \\
		q_2' &=  a_{t-1}v_{t-1}        \cdots  a_jv_j \cdots a_1 v_1    0'a_{t-1}'     \cdots a_{j}' \cdots a_1' a_t \cdots a_{i} \cdots a_{m-1} 0\\
		q_3' &= 0'  a_t \cdots a_{i} \cdots a_{m-1} a_{t-1} \cdots a_{j} \cdots a_1 0 v_{1}a_{1}' \cdots v_ja_j' \cdots      v_{t-1} a_{t-1}'.
	\end{align*} (for clarity, $t \le i \le m-1$ and $1 \le j \le t-1$). 
	For $F = E_1 \cup E_2 \cup \cdots \cup E_{t-1} \cup E_{m}$, observe that $(q_1'q_2'q_3')|_F = (p_1'p_2'p_3')|_F$. Thus, by Part-I, $(q_1'q_2'q_3')|_F$  represents $M_c[F]$ permutationally. It remains to show that, for $a = a_i$ ($t \le i\le m-1$) and $b \in E_j$ ($1 \le j \le m$), $a$ and $b$ are adjacent if and only if $a$ and $b$ alternate in $q_1'q_2'q_3'$.
	
	Suppose $a$ and $b$ are adjacent in $M_c$. Then, $b \in \{0,0'\}$. If $b = 0$, then clearly $ababab \ll q_1'q_2'q_3'$. Otherwise, $bababa \ll q_1'q_2'q_3'$. Thus, $a$ and $b$ alternate in $q_1'q_2'q_3'$. Conversely, suppose $a$ and $b$ are non-adjacent in $M_c$. If $a = a_i$ and $b = a_j$ (for $t \le i < j \le m-1$), then $a_j a_i \ll q_1'$ and $a_ia_j \ll q_2'$. If $a = a_i$ (for $t \le i \le m-1$) and $b \in E_1 \cup E_2 \cdots \cup  E_{j-1}$, then $ba\ll q_2'$ and $ab \ll q_3'$. Thus, $a$ and $b$ do not alternate in $q_1'q_2'q_3'$.  
	Hence, if $M_c$ has constituent paths of even length, then $M_c$ can be represented by the word $q_1'q_2'q_3'$.
	
	Hence, if the endpoints $0$ and $0'$ are adjacent in $M_c$, then $\mathcal{R}^p(M_c) \le 3$.	\qed
\end{proof}

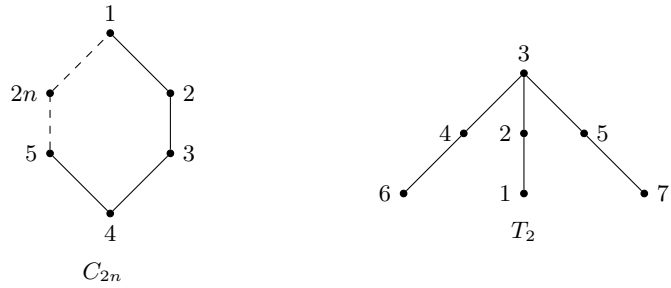
\begin{figure}[t]
	\centering	
	\begin{minipage}{.5\textwidth}
		\centering
		\begin{tikzpicture}[scale=0.8]
			\vertex (0) at (0,2) [label=above:$ 1$] {};  
			\vertex (1) at (1,1) [label=right:$ 2$] {}; 
			\vertex (2) at (1,0) [label=right:$3 $] {}; 
			\vertex (3) at (0,-1) [label=below:$4 $] {}; 
			\vertex (4) at (-1,0) [label=left:$5 $] {}; 
			\vertex (5) at (-1,1) [label=left:$2n $] {}; 		
			\path
			(0) edge (1)
			(1) edge (2)
			(2) edge (3)
			(3) edge (4);		
			\path[dashed]
			(4) edge (5)
			(5) edge (0);		
		\end{tikzpicture}
		
		$C_{2n}$
	\end{minipage}
	\begin{minipage}{.4\textwidth}
		\centering
		\begin{tikzpicture}[scale=0.8]
			\vertex (0) at (0,-3) [label=left:$ 1$] {};  
			\vertex (1) at (0,-2) [label=left:$ 2$] {}; 
			\vertex (2) at (0,-1) [label=above:$3 $] {}; 
			\vertex (3) at (-1,-2) [label=left:$4 $] {}; 
			\vertex (4) at (1,-2) [label=right:$5 $] {}; 
			\vertex (5) at (-2,-3) [label=left:$6$] {}; 
			\vertex (6) at (2,-3) [label=right:$ 7$] {};  
			\path
			(0) edge (1)
			(1) edge (2)
			(3) edge (2)
			(4) edge (2)
			(5) edge (3)
			(6) edge (4);	
		\end{tikzpicture}
		
		$T_2$
	\end{minipage}
	\caption{Forbidden induced subgraphs of melon graphs with \textit{prn} at most two}
	\label{t2_c2n}
\end{figure}

We now conclude this section with the classification of the \textit{prn} of $M_c$.

\begin{theorem}
	$\mathcal{R}^p(M_c) = 3$ if and only if it contains an even cycle $C_{2n}$, for $n \ge 3$, or $T_2$ (given in Fig. \ref{t2_c2n}) as an induced subgraph.
\end{theorem}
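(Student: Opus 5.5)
We combine the upper bound from the previous theorems with the known characterization of graphs of \textit{prn} at most two. By Theorems \ref{odd_parity}, \ref{even_parity} and \ref{00'adj_fc}, every comparability melon graph satisfies $\mathcal{R}^p(M_c) \le 3$. Hence $\mathcal{R}^p(M_c) = 3$ if and only if $\mathcal{R}^p(M_c) > 2$. The case $\mathcal{R}^p(M_c)=1$ occurs only when $M_c$ is complete, i.e., $K_2$ or $K_3$. So it remains to characterize when $M_c$ is a permutation graph, since the graphs of \textit{prn} at most two are exactly the permutation graphs \cite{Baker_1972}. Equivalently, the induced poset has dimension at most two.

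\textbf{Necessity of the obstructions.} It is classical that $C_{2n}$ for $n \ge 3$ and the tree $T_2$ have \textit{prn} (poset dimension) exactly three. For example, $C_{2n}$ with $n\ge 3$ is not a permutation graph because its complement is not a comparability graph, and $T_2$ is one of the minimal non-permutation trees (cf.~\cite{trotter_dim}). Since $\mathcal{R}^p$ is monotone under induced subgraphs (restrict the permutations), any $M_c$ containing one of them induced has $\mathcal{R}^p(M_c)\ge 3$, hence $=3$.

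\textbf{Sufficiency.} Suppose $M_c$ contains neither $C_{2n}$ ($n\ge3$) nor $T_2$ as an induced subgraph; we show $\mathcal{R}^p(M_c)\le 2$. Any two constituent paths $E_i,E_j$ of lengths $\ell_i,\ell_j$ with $0,0'$ nonadjacent form an induced cycle of length $\ell_i+\ell_j$. If $0,0'$ are adjacent, each $E_i$ of length $\ge 2$ forms the induced cycle $E_i+00'$. Excluding induced even cycles of length $\ge 6$ therefore forces strong restrictions, which we split according to Theorem \ref{chr_com_file}.

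\begin{enumerate}
\item All paths odd and $0\not\sim 0'$. Any two paths give a cycle of length $\ell_i+\ell_j \ge 6$. So $m\le 1$, and $M_c$ is a path, which is permutationally 2-representable by Lemma \ref{path_word}.
\item All paths even and $0\not\sim0'$. Two paths give a cycle of length $\ge 6$ unless both have length two. So at most one path has length $\ge 4$, and all others have length two. With no long path, $M_c=K_{2,m}$, which is a permutation graph. With one long path (length $\ge4$) and at least one length-two path, the union is already an induced $C_{\ge6}$, which is excluded. So $M_c$ is $K_{2,m}$ or a single path.
\item $0\sim0'$. Every odd path of length $\ge 5$ yields an induced $C_{\ge6}$, so odd paths other than the edge $00'$ have length three. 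Even paths have length two. The graph is therefore $K_2$ plus some $4$-cycles through the edge $00'$ (the pages of a book graph) and some triangles on $00'$.
\end{enumerate}

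In this last case, three pages produce an induced $T_2$: delete one of $0,0'$, say $0'$, and take the three pages hanging off $0$. That gives a spider with three legs of length two, namely $0$ with the three paths $a_i, a_i'$. This contains $T_2$ only if centered suitably, so we must check which configuration occurs. The plan is to verify that with at most two pages, plus arbitrarily many triangles, explicit two permutations exist. For example, adapt the word $x_2\cdots x_m 0 0' x_m\cdots x_2 00'$ from Theorem \ref{3-word-fg}, inserting the pages as in Lemma \ref{path_word}. With three or more pages, we exhibit an induced $T_2$, possibly using a triangle vertex, or else note it is in fact an obstruction.

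The main obstacle is exactly this book-type case. It requires pinning down which small configurations (pages and triangles on an edge) are permutation graphs, and either producing two explicit permutations or locating an induced $T_2$ or $C_{2n}$. If three pages alone contain no induced $T_2$, the statement would need this case re-examined, so we would first check $B_3$ directly against $T_2$ before writing the permutations.
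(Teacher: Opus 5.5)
\textbf{Verdict.} Your first two cases and your treatment of the obstructions are sound. The proposal has a genuine gap in case 3: it stops exactly where $T_2$ has to enter.

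Your reduction of case 3 is correct. When there is no induced $C_{2n}$ with $n\ge 3$, case 3 leaves only the edge $00'$ together with some pages of length three and some triangles on $00'$. This is a different route from the paper's. The paper simply invokes Gallai's list of minimal non-permutation graphs, asserts that only $C_{2n}$ ($n\ge 3$) and $T_2$ can occur inside $M_c$, and notes that $T_2$ lies in every book with at least three pages. A finished version of your structural argument would be more self-contained than that.

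The problem is that the direction ``no induced $C_{2n}$ and no induced $T_2$ implies $\mathcal{R}^p(M_c)\le 2$'' is not proved in case 3. You do not settle that $T_2$-freeness forces at most two pages. You also do not show that at most two pages plus triangles gives a permutation graph.

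\textbf{How to close the gap.} Both missing points are short.

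First, your hesitation about $B_3$ is unfounded. Delete $0'$ from three pages $\langle 0,a_i,a_i',0'\rangle$. What remains is $0$ with the three legs $0a_ia_i'$. The only edges among these seven vertices are $0a_i$ and $a_ia_i'$, so this is exactly $T_2$ of Fig.~\ref{t2_c2n} with $0$ as the centre. There is no ``centering'' issue, and so $T_2$-freeness forces at most two pages.

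Second, for at most two pages you must actually produce the two permutations; gesturing at adapting a word is not enough. Take pages $\langle 0,a_1,a_1',0'\rangle$ and $\langle 0,a_2,a_2',0'\rangle$ and triangle vertices $x_1,\dots,x_t$, and set $p=a_1'\,0\,a_1\,x_1\cdots x_t\,a_2'\,0'\,a_2$ and $q=a_2'\,0\,a_2\,x_t\cdots x_1\,a_1'\,0'\,a_1$.
\begin{itemize}
\item Every edge ($00'$, $0a_i$, $0x_j$, $0'a_i'$, $0'x_j$, $a_ia_i'$) appears in the same order in $p$ and $q$.
\item Every non-adjacent pair ($0a_i'$, $0'a_i$, $a_1a_2$, $a_1a_2'$, $a_1'a_2$, $a_1'a_2'$, $x_ja_i$, $x_ja_i'$, $x_ix_j$) is reversed.
\item The cases with fewer pages follow by deleting letters.
\end{itemize}
With these two facts, case 3 closes and your argument is complete.
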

\begin{proof}
	In \cite{Gallai}, the permutation graphs, i.e., the class of graphs with \textit{prn} at most two, are characterized in terms of forbidden induced subgraphs (see, e.g., \cite[Theorem 7, Page 200]{Vincent}). Among the forbidden subgraphs for permutation graphs, only $C_{2n}$ (for $n \ge 3$) and $T_2$ are the possible induced subgraphs of $M_c$. Note that $T_2$ is an induced subgraph of $M_c$ if $M_c$ is a book graph with at least three pages (in which $C_4$ is a largest induced cycle). Accordingly, in view of Theorems \ref{odd_parity}, \ref{even_parity} and \ref{00'adj_fc}, since $\mathcal{R}^p(M_c) \le 3$, the result follows. 	\qed
\end{proof}    

\begin{remark}
	Among melon graphs, while $\mathcal{R}^p(K_n) = 1$ (for $n \le 3$), we have $\mathcal{R}^p(M_c) = 2$ if $M_c$ is the book graph with two pages or the constituent paths of $M_c$ are of length at most two or $M_c$ is the combination of both. 
\end{remark}    

\section{Line Graphs}
\label{line_melon}

\begin{figure}[t]
	\centering
	\begin{tabular}{ccc}
		\begin{tikzpicture}[scale=0.9]
			\vertex (0) at (0,0) [label=above:$0'$] {};  
			\vertex (1) at (-1,-1) [label=left:$1' $] {}; 
			\vertex (2) at (1,-1) [label=right:$2'  $] {}; 
			\vertex (3) at (-1,-2) [label=left:$1 $] {}; 
			\vertex (4) at (1,-2) [label=right:$2$] {};
			\vertex (5) at (-1, -1.5) [label=right:$ $] {};
			\vertex (6) at (1, -1.5) [label=right:$ $] {};
			
			\vertex (10) at (0,-3) [label=below:$ 0 $] {}; 
			\path
			(0) edge node[left] {$a_1$} (1)
			(0) edge node[right] {$a_2$}(2)
			(0) edge node[left] {$x$}(10)
			(10) edge node[left] {$a_{i+1}$} (3)
			(10) edge node[right] {$a_{i}$}(4)
			(5) edge node[left] {$ $}(1)
			(6) edge node[right] {$ $}(4);		
			\path[dashed]
			(1) edge (3)
			(2) edge (4);
		\end{tikzpicture}&\hspace{50pt} & \begin{tikzpicture}[scale=1.6]
			\vertex (a) at (0,0) [label=above:$a_1 $] {};  
			\vertex (b) at (1,0) [label=above:$a_2 $] {}; 
			\vertex (c) at (0,-1) [label=below:$a_{i+1} $] {}; 
			\vertex (d) at (1,-1) [label=below:$a_{i} $] {};
			\vertex (e) at (0,-0.5) [label=left:$  $] {}; 
			\vertex (f) at (1,-0.5) [label=right:$  $] {};		
			\vertex (0) at (0.5,-0.5) [label=right:$ x $] {}; 
			\path
			(a) edge (b)
			(c) edge (d)
			(0) edge (a)
			(0) edge (b)
			(0) edge (c)
			(0) edge (d)
			(a) edge (e)
			(d) edge (f);
			\path[dashed]
			(a) edge (c)
			(b) edge (d);
		\end{tikzpicture}\\
		$M$ & & $L(M)$
	\end{tabular}	
	\caption{A melon graph with three constituent paths and its line graph}
	\label{example_f}
\end{figure}
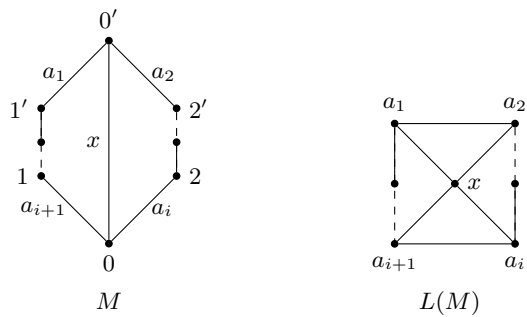

In this section, we characterize the word-representable line graphs of melon graphs. Consequently, we show that their representation number is at most three. Further, we also study the \textit{prn} concerning to these graphs.

The line graph of a graph $G = (V, E)$, denoted by $L(G)$, is the graph whose vertex set is $E$, and any two vertices are adjacent if and only if the corresponding edges of $G$ share a vertex.

If a melon graph $M$ has only two constituent paths, i.e., $M$ is a cycle, then the line graph $L(M)$ is isomorphic to $M$, and hence it is a 2-word-representable graph. In addition, if $M$ has three constituent paths where the endpoints $0$ and $0'$ are adjacent in $M$ (as shown in Fig. \ref{example_f}), we construct a 2-uniform word representing $L(M)$ as per the following:
\begin{enumerate}	
	\item Using the method given in \cite[Section 3.2]{kitaev15mono}, we construct a 2-uniform word $c_1c_nw$ for the cycle $\langle c_1, c_2, \ldots, c_i, c_{i+1}, \ldots,  c_n, c_1 \rangle$, where
	\[w = c_2 c_1 \cdots c_i c_{i-1} \cdots c_{n} c_{n-1} \] (for clarity, $2 \le i \le n$).	
	\item By inserting $x$ in between $c_2$ and $c_1$, and in between $c_{i+1}$ and $c_i$, consider the 2-uniform word 
	\[ c_1 c_n c_2 x c_1  \cdots c_i c_{i-1} c_{i+1} x  c_i \cdots  c_{n} c_{n-1} \]	
	and note that it represents $L(M)$. Hence, $L(M)$ is a 2-word-representable graph.
\end{enumerate}

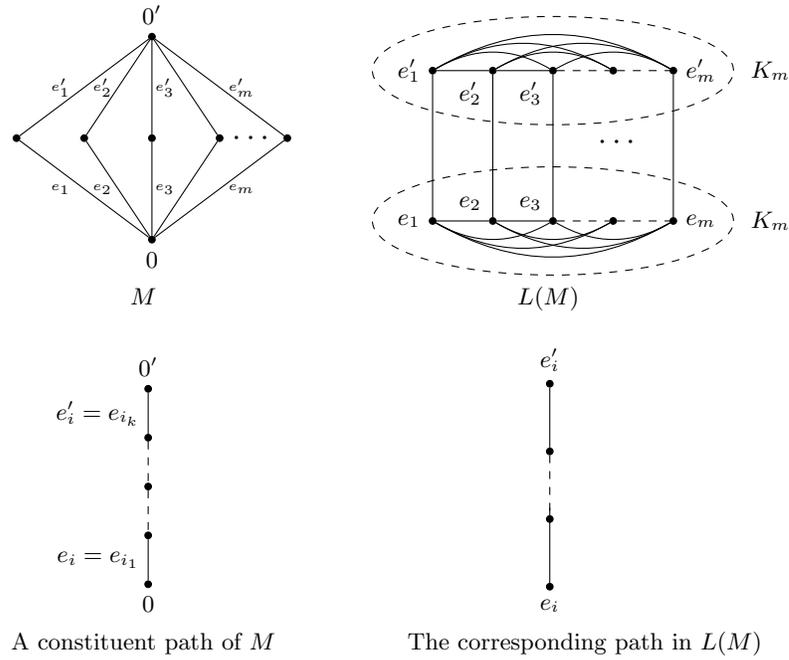
\begin{figure}[t]	
	\centering	
	\begin{tabular}{cc}
		\quad \qquad\begin{tikzpicture}[scale=0.9]
			\vertex (0) at (0,0.5) [label=above:$0'$] {};  
			\vertex (1) at (-2,-1) [label=left:$ $] {}; 
			\vertex (2) at (-1,-1) [label=left:$ $] {}; 
			\vertex (3) at (0,-1) [label=right:$ $] {}; 
			\vertex (4) at (2,-1) [label=left:$  $] {}; 
			\vertex (5) at (1,-1) [label=right:\large{$\cdots $}] {}; 		
			\vertex (10) at (0,-2.5) [label=below:$0$] {};  		
			\path
			(0) edge node[left=2] {\tiny{$e_1'$}} (1)
			(0) edge node[left=-1] {\tiny{$e_2'$}}(2)
			(0) edge node[right=-2] {\tiny{$e_3'$}} (3)
			(0) edge node[right] {\tiny{$e_m'$}}(4) 
			(10) edge node[left=2] {\tiny{$e_1$}}(1)
			(10) edge node[left=-1] {\tiny{$e_2$}} (2)
			(10) edge node[right=-2] {\tiny{$e_3$}}(3)
			(10) edge node[right] {\tiny{$e_m$}} (4)
			(0) edge  (5)
			(10) edge (5);		
		\end{tikzpicture}&   \quad \qquad\begin{tikzpicture}[scale=0.8]
			\vertex (1) at (-2,1) [label=left:$e_1'$] {}; 
			\vertex (2) at (-1,1) [label=below left:$e_2'$] {}; 
			\vertex (3) at (0,1) [label=below left :$e_3'$] {}; 
			\vertex (4) at (1,1) [label=above:$ $] {}; 
			\vertex (5) at (2,1) [label=right:$e_m'$] {}; 
			\vertex (11) at (-2,-1.5) [label=left:$e_1$] {}; 
			\vertex (12) at (-1,-1.5) [label=above left:$e_2$] {}; 
			\vertex (13) at (0,-1.5) [label=above left:$e_3$] {}; 
			\vertex (14) at (1,-1.5) [label=below:$ $] {}; 
			\vertex (15) at (2,-1.5) [label=right:$e_m$] {};
			\node (A) at (3,1) [label=right:$K_m $] {}; 
			\node (B) at (3,-1.5) [label=right:$K_m $] {}; 
			\node (0) at (1.1,-0.6) [label=above:\large{$\cdots $}] {}; 
			\draw (0,1)[dashed] ellipse  (3cm and 0.9cm) ;
			\draw (0,-1.5)[dashed] ellipse (3cm and 0.9cm);		
			\path	
			(1) edge (2)
			(1) edge[bend left] (3)
			(1) edge[bend left] (4)
			(1) edge[bend left] (5)
			(2) edge (3)
			(2) edge[bend left] (4)
			(2) edge[bend left] (5)
			(3) edge[dashed] (4)
			(3) edge[bend left] (5)
			(4) edge[dashed] (5)
			(11) edge (12)
			(11) edge[bend right] (13)
			(11) edge[bend right] (14)
			(11) edge[bend right] (15)
			(12) edge (13)
			(12) edge[bend right] (14)
			(12) edge[bend right] (15)
			(13) edge[dashed] (14)
			(13) edge[bend right] (15)
			(14) edge[dashed] (15)
			(11) edge (1)
			(12) edge (2)
			(13) edge (3)
			(15) edge (5); 
		\end{tikzpicture} \\
		\qquad\quad $M$ & $L(M)$ \\[10pt]
		\begin{tikzpicture}[scale=1.3]
			\vertex (0) at (0,0.5) [label=above:$0'$] {};  
			\vertex (1) at (0,0) [label=left:$ $] {}; 
			\vertex (2) at (0,-0.5) [label=left:$ $] {};
			\vertex (3) at (0,-1 ) [label=left:$ $] {};
			\vertex (10) at (0,-1.5) [label=below:$0$] {};  		
			\path
			(0) edge node[left] {$e_i' = e_{i_k}$} (1)
			(10) edge node[left] {$e_i = e_{i_1}$}(3);
			\path[dashed]
			(1) edge (2)
			(2) edge (3);		
		\end{tikzpicture}&   \begin{tikzpicture}[scale=0.6]
			\vertex (0) at (0,0.5) [label=above:$e_i'$] {};  
			\vertex (1) at (0,-1) [label=left:$ $] {}; 
			\vertex (2) at (0,-2.5) [label=left:$ $] {};
			\vertex (10) at (0,-4) [label=below:$e_i$] {};  		
			\path
			(0) edge  (1)
			(10) edge (3);		
			\path[dashed]
			(1) edge (2);		
		\end{tikzpicture} \\
		\quad \qquad A constituent path of $M$ &   \quad \qquad The corresponding path in $L(M)$
	\end{tabular}	
	\caption{A melon graph and its line graph}
	\label{line_cartesian}
\end{figure}

Suppose a melon graph $M =(E_1, \ldots, E_m)$ has all its constituent paths of length exactly two. Then, the line graph of the melon graph is the Cartesian product $K_m \square K_2$ of the complete graphs $K_m$ and $K_2$, as shown in Fig. \ref{line_cartesian}. In \cite{broere_2019}, it was shown that the representation number of $K_m \square K_2$ is at most three.

In the following, we show that the line graphs of the melon graphs whose constituent paths are of length at least two are 3-word-representable.

\begin{lemma}\label{non-adjacent_line}
	Let $M$ be a melon graph such that the endpoints $0$ and $0'$ are not adjacent. Then $L(M)$ is $3$-word-representable.
\end{lemma}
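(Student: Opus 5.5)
The plan is to reduce to the case where all constituent paths have length exactly two, and then grow the line graph one path at a time using Theorem \ref{subdiv}.

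\textbf{Structure of $L(M)$.} Since $0$ and $0'$ are non-adjacent, every constituent path $E_i$ has length at least two. Let $e_i$ and $e_i'$ be the edges of $E_i$ incident to $0$ and $0'$, as in Fig. \ref{line_cartesian}. Then $L(M)$ has the following shape:
\begin{itemize}
\item the vertices $e_1,\ldots,e_m$ form a clique $K_m$;
\item the vertices $e_1',\ldots,e_m'$ form a clique $K_m$;
\item for each $i$, the edges of $E_i$ form a path in $L(M)$ from $e_i$ to $e_i'$ of length $|E_i|-1$, where $|E_i|$ is the number of edges of $E_i$.
\end{itemize}
Apart from these, there are no further adjacencies. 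In particular, if $E_i$ has length two, then $e_i$ and $e_i'$ are simply adjacent.

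\textbf{Base case.} First handle the melon $M^\ast$ with all $m$ constituent paths of length two. Here $L(M^\ast) = K_m \square K_2$, which is $3$-word-representable by \cite{broere_2019}.

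\textbf{Inductive step.} Lengthening one constituent path $E_i$ of $M$ from length $\ell$ to $\ell+1$ lengthens the corresponding $e_i$–$e_i'$ path in $L(M)$ by one. So a naive subdivision argument would do, but Theorem \ref{subdiv} only lets us \emph{add} a path of length at least three between two existing vertices. I will therefore argue as follows.

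Start from $L(M'')$, where $M''$ is obtained from $M$ by replacing each path of length at least three with a path of length two; $L(M'')$ is then $K_m\square K_2$. A path $E_i$ of length $\ell_i\ge 3$ corresponds in $L(M)$ to a path of length $\ell_i - 1\ge 2$ between $e_i$ and $e_i'$, and this path replaces the edge $e_ie_i'$. This is not literally "adding a path": the edge $e_ie_i'$ must disappear, and a path of length exactly two would be too short for Theorem \ref{subdiv} anyway.

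To circumvent this, I will use a different base. Take $H$ to be $K_m \square K_2$ with the rungs removed for all indices $i\in I=\{i:\ell_i\ge3\}$. Then $H$ is the line graph of the melon graph consisting only of the length-two paths together with the two pendant stars. Concretely, $H$ is the two cliques joined by a matching on the indices outside $I$.

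\textbf{Steps in order.}
\begin{enumerate}
\item Show $H$ is $3$-word-representable, by exhibiting an explicit $3$-uniform word. Two cliques joined by a partial matching is close to $K_m\square K_2$, so I would adapt the word of \cite{broere_2019}. Alternatively, I would check that $H$ is a comparability graph of dimension at most $3$ and use three permutations.
\item For each $i\in I$ with $\ell_i\ge4$, add the $e_i$–$e_i'$ path of length $\ell_i-1\ge3$ via Theorem \ref{subdiv}.
\item Handle the paths with $\ell_i=3$, which give $L$-paths of length two. For these, insert a new vertex adjacent exactly to $e_i$ and $e_i'$. I plan to build these vertices directly into the word of step 1, rather than rely on Theorem \ref{subdiv}.
\end{enumerate}

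\textbf{Main obstacle.} The crux is step 1 combined with the length-three paths: finding a $3$-uniform word for the two cliques $K_m$ joined by a partial matching, plus degree-two vertices bridging $e_i$ and $e_i'$.

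For the cliques, the plan is to take a representation where each clique appears as increasing blocks in all three copies. The non-matched pairs $e_i,e_j'$ should be broken by one reversed block, exactly as in \cite{broere_2019}.

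For a bridge vertex $z_i$, I would place its three occurrences so that they interleave only with $e_i$ and $e_i'$, and check non-alternation with every other letter case by case, in the style of Lemma \ref{even_cycle_word}.
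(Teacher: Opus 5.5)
Your description of $L(M)$ is correct, and your architecture matches the paper's: an explicit base word, direct insertion for constituent paths of length three, and Theorem~\ref{subdiv} for length at least four. The gap is that the step you yourself call the crux is never carried out. You give no $3$-uniform word for $H$ (two copies of $K_m$ joined by a partial matching) together with the bridge vertices $z_i$; you only list properties such a word should have. Everything else in the lemma is bookkeeping, so as written the proposal does not prove the statement.

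Your step~3 also cannot be done on an arbitrary representant of $H$. For $z_i$, occurring three times, to alternate with $e_i$ and $e_i'$ but with no other letter, some gap between consecutive occurrences of $z_i$ must contain no letter other than $e_i,e_i'$. So the word for $H$ has to be built with suitable factors around $e_i$ and $e_i'$. Your alternative of three permutations fails in general: if at least three constituent paths have length two, $H$ contains $K_3\square K_2$ as an induced subgraph, and this prism is not a comparability graph (proof of Lemma~\ref{l_a3}). Finally, steps~1 and~3 must come before step~2, since Theorem~\ref{subdiv} gives no explicit word into which the $z_i$ could later be inserted.

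The missing idea is that you never need a separate word for $H$. Start from the Broere--Zantema word for $K_m\square K_2$,
\[ w = e_1\cdots e_m\; e_1'e_1\cdots e_m'e_m\; e_1'\cdots e_m'\; e_1e_1'\cdots e_me_m'. \]
For every $i$ with $\ell_i\ge 3$, replace the factor $e_i'e_i$ by $x_ie_ie_i'x_i$ and the factor $e_ie_i'$ by $e_ix_ie_i'$.
\begin{itemize}
\item The swap turns the restriction to $\{e_i,e_i'\}$ into $e_ie_ie_i'e_i'e_ie_i'$. This kills the rung without changing the relative order of $e_i$ or $e_i'$ with any other letter.
\item $x_i$ alternates with $e_i$ and $e_i'$, and with nothing else, because only $e_ie_i'$ lies between its first two occurrences.
\end{itemize}
For the $i$ with $\ell_i\ge 4$, delete $x_i$; restricting a $3$-uniform word still represents the induced subgraph. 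What remains is exactly your $H$ with the bridges. Then add the paths of length $\ell_i-1\ge 3$ by Theorem~\ref{subdiv}. This is how the paper proves the lemma.
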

\begin{proof}
	If the constituent paths of $M$ are of length two, then $L(M) = K_m \square K_2$ as shown in Fig. \ref{line_cartesian}, and its representation number is at most three. In fact, the following 3-uniform word $w$ representing $K_m \square K_2$ can be constructed using the method given in \cite{broere_2019}:  
	\[w = e_1 \cdots e_i \cdots e_m e'_1 e_1 \cdots e'_i e_i \cdots e'_m e_m e'_1 \cdots e'_i \cdots e'_m e_1 e'_1 \cdots e_i e'_i \cdots e_m e'_m,\] (for clarity, $1 \le i \le m$).
	Note that, as shown in Fig. \ref{line_cartesian}, $e_i$ and $e_i'$ are adjacent in the line graph $L(M)$ corresponding to the two-length path $\langle e_i, e_i'\rangle$.
	
	Suppose $M$ has a constituent path $E_i$ of length at least three. For $k \ge 3$, let $E_i = \langle e_i = e_{i_1}, \ldots, e_{i_k} = e_i'\rangle$, where $e_{i_j}$, $1 \le j \le k$, are the edges. Note that the line graph $L(M)$ has a path of length $k-1$ corresponding to $E_i$ of $M$, which can be obtained by applying the subdivision (i.e.,  replacing an edge by a path) on the edge connecting $e_i$ and  $e_i'$ as shown in Fig. \ref{line_cartesian}, and extend the word $w$ representing $K_m \square K_2$.
	
	If the edge connecting $e_i$ and  $e_i'$ is replaced by a path of length two, say $\langle e_i, x, e'_i \rangle$, then replace the factors $e'_i e_i$ and $e_i e'_i $ in $w$ by $xe_ie'_ix$ and $e_i x e'_i$, respectively. Note that $e_i$ and $e'_i$ do not alternate in the word after the replacement. Between two consecutive occurrences of $x$ only $e_i$ and $e_i'$ occur exactly once. Hence, $x$ alternate only with $e_i$ and $e_i'$.
	
	If we replace the edge connecting $e_i$ and  $e_i'$ by a path of length $k$, for $k \ge 3$, then first replace the edge by a path of length two (with an intermediate vertex $x$) and note that the resultant graph is 3-word-representable, as described above.  Then delete the intermediate vertex $x$ and add the path of length $k$. By Theorem \ref{subdiv}, it can be observed that the resultant graph is 3-word-representable. \qed
\end{proof}

\begin{figure}
	\centering
	\begin{tabular}{ccc}
		\begin{tikzpicture}[scale=0.9]
			\vertex (0) at (0,0.5) [label=above:$0$] {};
			\vertex (1) at (-2,-1) [label=left:$1$] {};
			\vertex (2) at (-1,-1) [label=left:$2$] {};
			\vertex (3) at (1,-1) [label=right:$3$] {};
			\vertex (10) at (0,-2.5) [label=below:$0'$] {};
			\path
			(0) edge node[left] {\tiny{$e_1$}} (1)
			(0) edge node[right] {\tiny{$e_2$}} (2)
			(0) edge node[right] {\tiny{$e_3$}} (3)
			(0) edge node[right] {\tiny{$e_0$}} (10)
			(1) edge node[left] {\tiny{$e_1'$}} (10)
			(2) edge node[right] {\tiny{$e_2'$}} (10)
			(3) edge node[right] {\tiny{$e_3'$}} (10);
		\end{tikzpicture} & \hspace{40pt} & \begin{tikzpicture}[scale=1]
			\vertex (a) at (0,0) [label=left:${e_1}$] {};
			\vertex (b) at (0,-2) [label=left:$e_2$] {};
			\vertex (c) at (0.5,-1) [label=left:$e_3$] {};
			\vertex (d) at (1,-1) [label=below:$e_0$] {};
			\vertex (e) at (2,0) [label=right:$e_1'$] {};
			\vertex (f) at (2,-2) [label=right:$e_2'$] {};
			\vertex (g) at (1.5,-1) [label=right:$e_3'$] {};		
			\path
			(d) edge (a)
			(d) edge (b)
			(d) edge (c)
			(d) edge (d)
			(d) edge (e)
			(d) edge (f)
			(d) edge (g)
			(a) edge (b)
			(a) edge (c)
			(a) edge (e)
			(b) edge (c)
			(b) edge (f)
			(c) edge[bend left] (g)
			(e) edge (f)
			(e) edge (g)
			(f) edge (g);
		\end{tikzpicture}\\
		\quad  \quad$A_3$ && $L(A_3)$
	\end{tabular}	
	\caption{A triangular book graph and its line graph}
	\label{line_g}
\end{figure}
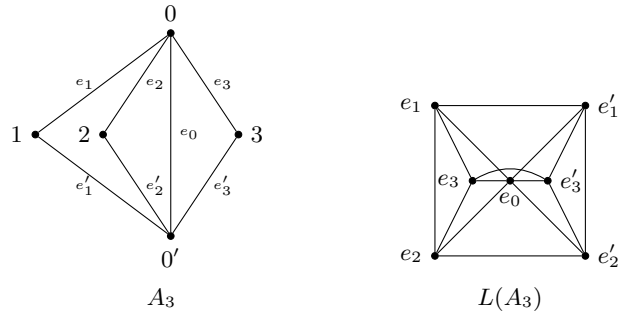

\begin{lemma}\label{l_a3}
	Let $A_3$ be the triangular book graph with three pages, as shown in Fig. \ref{line_g}. The line graph $L(A_3)$ is not word-representable.
\end{lemma}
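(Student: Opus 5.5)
The plan is to recognise $L(A_3)$ as a cone over a small non-comparability graph. I will then use the standard fact (cf.\ \cite{kitaev15mono}) that in a word-representable graph the neighbourhood of every vertex induces a comparability graph. Equivalently, a graph with a universal vertex is word-representable if and only if the graph obtained by deleting that vertex is a comparability graph.

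First, read off the structure of $L(A_3)$ from Fig.~\ref{line_g}. The edge $e_0=\{0,0'\}$ shares an endpoint with every other edge, so $e_0$ is a universal vertex of $L(A_3)$. The edges $e_1,e_2,e_3$ all contain $0$ and so form a triangle; likewise $e_1',e_2',e_3'$ all contain $0'$ and form a triangle. For each $i$, $e_i$ and $e_i'$ share vertex $i$, and there are no further adjacencies. Hence $N(e_0)$ induces the triangular prism $K_3 \square K_2$. It therefore suffices to show that the prism is not a comparability graph.

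For the prism, I will run the forcing ($\Gamma$-relation) argument on a hypothetical transitive orientation. I use two rules, valid in any transitive orientation with $a\to b$: if $c\sim a$ and $c\not\sim b$ then $a\to c$; and if $c\sim b$ and $c\not\sim a$ then $c\to b$. By symmetry (reversing the orientation if necessary), assume $e_1\to e_1'$.
\begin{enumerate}
\item Apply the second rule with $c=e_2'$, which is adjacent to $e_1'$ but not to $e_1$; this gives $e_2'\to e_1'$. The same rule with $c=e_3'$ gives $e_3'\to e_1'$.
\item Apply the first rule to $e_2'\to e_1'$ with $c=e_2$, which is adjacent to $e_2'$ but not to $e_1'$; this gives $e_2'\to e_2$.
\item Apply the first rule to $e_2'\to e_2$ with $c=e_3'$, which is adjacent to $e_2'$ but not to $e_2$; this gives $e_2'\to e_3'$.
\item Repeat steps 2 and 3 symmetrically starting from $e_3'\to e_1'$. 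This gives $e_3'\to e_3$ and then $e_3'\to e_2'$.
\end{enumerate}
Steps 3 and 4 orient the edge $e_2'e_3'$ in both directions, which is a contradiction. So the prism admits no transitive orientation.

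Consequently, the neighbourhood of $e_0$ in $L(A_3)$ is not a comparability graph, and $L(A_3)$ is not word-representable.

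The main point of care is the forcing step: each application must use a genuinely non-adjacent pair in the prism, and the two derived orientations of $e_2'e_3'$ must really come out opposite. The reduction to the neighbourhood of a universal vertex is a direct citation of the known result.
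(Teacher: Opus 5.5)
Your proof is correct and follows the paper's skeleton. The paper also observes that $e_0$ is adjacent to every other vertex of $L(A_3)$, that its neighbourhood induces the prism $K_3 \square K_2$, and that the Kitaev--Pyatkin condition (the neighbourhood of every vertex of a word-representable graph induces a comparability graph) then rules out word-representability.

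The only real difference is how you show the prism is not a comparability graph:
\begin{itemize}
\item The paper notes that the complement of the prism is $C_6$, a comparability graph with \textit{prn} three. It then uses the characterization of permutation graphs as exactly those graphs for which both the graph and its complement are comparability graphs.
\item You run the forcing rules directly. Your steps check out: each application uses a genuinely non-adjacent pair ($e_2',e_3' \not\sim e_1$; $e_2,e_3 \not\sim e_1'$; $e_3' \not\sim e_2$; $e_2' \not\sim e_3$), and you do obtain both $e_2' \to e_3'$ and $e_3' \to e_2'$.
\end{itemize}
Your version is self-contained and elementary. It needs neither the fact that $C_6$ (the standard example $S_3$) has dimension three nor the permutation-graph characterization. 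The paper's version is shorter given those citations and fits its \textit{prn} viewpoint.

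One small wording point: the universal-vertex characterization is not literally equivalent to the neighbourhood condition. This is harmless, because you only use the direction that both statements give.
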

\begin{proof}
	Suppose $L(A_3)$ is word-representable. Then, by \cite[Theorem 10]{kitaev08}, the subgraph induced by the neighborhood of every vertex of $L(A_3)$ must be a comparability graph. However, it can be observed that the subgraph of $L(A_3)$ induced by the neighborhood of the vertex $e_0$ is the prism $Pr_3$  (cf. Fig. \ref{line_g}), which is not a comparability graph. For instance, the complement of $Pr_3$ is the graph $C_6$. Since $C_6$ is a comparability graph with \textit{prn} three, $Pr_3$ is not a comparability graph, by \cite[Theorem 2.1]{trotterbook} (from \cite{Gallai}). Thus, $L(A_3)$ is not word-representable. \qed
\end{proof}

\begin{lemma}\label{H}
	The graph $H = (V, E)$, where $V  = \{a_1,a_2, \ldots, a_m, b_1,b_2, \ldots , b_m, x\}$ and $E = \Big\{\{a_i,a_j\}\; |\; i \ne j\Big\} \cup \Big\{\{b_i, b_j\}\; |\; i \ne j\Big\} \cup \Big\{\{x, y\}\; |\; y \in V \setminus \{x\} \Big\} \cup \Big\{\{a_1, b_1\}, \{a_2, b_2\}\Big\}$, is permutationally 3-representable.  
\end{lemma}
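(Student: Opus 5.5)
The plan is to view $H$ as a comparability graph, fix a transitive orientation, and exhibit linear extensions whose intersection is the induced poset. Concatenating the corresponding permutations then represents $H$ permutationally. To get exactly three permutations I will use Remark \ref{3-uniform_path}: if two permutations $p_1p_2$ already represent $H$, then $p_1p_2p_2$ does as well.

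\textbf{Step 1 (orientation).} Make $x$ the minimum of the poset; it is universal, so this is consistent with transitivity. Orient both cliques as chains:
\[a_1 < a_3 < \cdots < a_m < a_2, \qquad b_2 < b_3 < \cdots < b_m < b_1,\]
and add the two cross relations $a_1 < b_1$ and $b_2 < a_2$. These positions are forced. If anything in $A$ lay below $a_1$ (or above $b_1$ in $B$), transitivity through $a_1<b_1$ would create a forbidden cross comparability; the same argument applies to the pair $b_2<a_2$. One then checks directly that this relation is transitive. For instance, $a_1<b_1$ cannot be extended because $a_1$ is minimal in $A$ and $b_1$ is maximal in $B$, and symmetrically for $b_2<a_2$. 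So the comparable pairs are exactly the edges of $H$.

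\textbf{Step 2 (two linear extensions).} Take
\[p_1 = x\, b_2 b_3 \cdots b_m\, a_1\, b_1\, a_3 \cdots a_m\, a_2, \qquad p_2 = x\, a_1 a_3 \cdots a_m\, b_2\, a_2\, b_3 \cdots b_m\, b_1 .\]
Each respects both chains together with $a_1<b_1$ and $b_2<a_2$, and $x$ comes first in each. Hence every edge of $H$ has the same relative order in $p_1$ and $p_2$, so its endpoints alternate in $p_1p_2$.

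\textbf{Step 3 (non-edges).} Every non-edge has the form $\{a_i,b_j\}$ with $(i,j)\notin\{(1,1),(2,2)\}$. I will check that each such pair occurs in opposite orders in $p_1$ and $p_2$, in four cases:
\begin{itemize}
\item $a_1$ versus $b_j$ with $j\ge 2$: $b_j$ precedes $a_1$ in $p_1$ and follows it in $p_2$.
\item $a_2$ versus $b_j$ with $j\ne 2$: $b_j$ precedes $a_2$ in $p_1$ and follows it in $p_2$.
\item $a_i$ with $i\ge 3$ versus $b_1$: $b_1$ precedes $a_i$ in $p_1$ and follows it in $p_2$.
\item $a_i$ versus $b_j$ with $i,j\ge 3$: $b_j$ precedes $a_i$ in $p_1$ and follows it in $p_2$.
\end{itemize}
Hence $p_1p_2$ represents $H$, and by Remark \ref{3-uniform_path} so does $p_1p_2p_2$. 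This shows $H$ is permutationally 3-representable.

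The main obstacle is choosing the extensions. A naive attempt fails because the reversals $a_2<b_j$ and $b_1<a_i$ (for $i,j\ge 3$) cannot occur in the same linear extension: together they would force $a_i<a_2<b_j<b_1<a_i$. The extensions must therefore be built so that $p_1$ handles the pairs with $b_1<a_i$ and $p_2$ handles those with $a_2<b_j$. This is exactly how $p_1$ and $p_2$ were designed, and the case check in Step 3 is the part I will verify carefully, including the small case $m=2$, where the middle blocks are empty.
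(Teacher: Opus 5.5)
Your argument is correct, apart from one omitted case in Step~3. Your four bullets never treat the non-edges $\{a_i,b_2\}$ with $i\ge 3$. These pairs are reversed as required: $b_2$ immediately follows $x$ in $p_1$, hence precedes $a_i$, while in $p_2$ the vertex $a_i$ lies in the initial block $a_1a_3\cdots a_m$, before $b_2$. So simply extend your last bullet to $i\ge 3$, $j\ge 2$, with the same justification.

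With that fixed, your route differs from the paper's mainly in economy.
\begin{itemize}
\item The paper writes down three permutations $q_1,q_2,q_3$, with $x$ last in each, and checks the word $q_1q_2q_3$ directly.
\item Your $p_1$ and $p_2$ are exactly the paper's $q_3$ and $q_2$ with $x$ moved to the front. Every non-alternation witness in the paper's proof already lies in $q_2q_3$.
\item So you prove the sharper fact that $H$ is a permutation graph, i.e.\ $\mathcal{R}^p(H)=2$ for $m\ge 2$, via your transitive orientation. You then reach three permutations through Remark~\ref{3-uniform_path}.
\end{itemize}

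The paper's extra permutation $q_1=a_1b_2a_3b_3\cdots a_mb_ma_2b_1x$ is not needed for this lemma. It is needed later, in Lemma~\ref{adjacent_line}. There the specific word $q_1q_2q_3$ is modified by inserting a single vertex $y_j$ around the factors $a_jb_j$ ($j\ge 3$) of $q_1$. This creates paths of length two, which Theorem~\ref{subdiv} does not cover, so the factors are genuinely used. Your padded word $p_1p_2p_2$ has no factor $a_jb_j$ for $j\ge 3$. It proves the lemma as stated, but it could not be substituted into that later construction without redoing the insertions.
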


\begin{proof}
	Consider the following three permutations over the set $V$:
	\begin{align*}
		q_1 &= a_1 b_2 a_3 b_3 \cdots a_j b_j \cdots a_m b_m a_2 b_1x\\
		q_2 &= a_1 a_3 \cdots a_j \cdots a_m b_2 a_2 b_3 \cdots b_j \cdots b_m b_1x\\
		q_3 &= b_2 b_3 \cdots b_j \cdots b_m a_1 b_1 a_3 \cdots a_j \cdots a_m a_2x, 
	\end{align*} (for clarity, $3 \le j \le m$).	
	We show that the word $w = q_1q_2q_3$ represents $H$ permutationally. 
	
	Note that $x$ occurs at the end of each permutation in $w$ so that $x$ alternates with all $a_i$'s and $b_i$'s in $w$. Since $a_1 a_3\cdots a_m a_2 \ll q_i$, for all $i = 1, 2, 3$, we have $a_r$ and $a_s$, for $r \ne s$, alternate in $w$. Similarly, for each for $i = 1, 2, 3$, we have $b_2 b_3 b_4 \cdots b_m b_1\ll q_i$ so that $b_r$ and $b_s$, for $r \ne s$, alternate in $w$. Further note that $a_1b_1a_1b_1a_1b_1 \ll w$ and $b_2a_2b_2a_2b_2a_2 \ll w$. Hence, any two adjacent vertices of $H$ alternate in $w$. 
	
	We consider all the non-adjacent pairs of vertices in $H$ and observe that they do not alternate in $w$.  For all $r, s \in \{3, \ldots, m\}$, $a_r$ and $b_s$ are not adjacent in $H$ and $a_rb_sb_s a_r \ll q_2 q_3 \ll w$.  
	Further, for $2 \le r \le m$, $a_1$ is not adjacent to $b_r$ and note that $a_1 b_r b_r a_1 \ll q_2 q_3 \ll w$. Also,  $a_2$ is not adjacent to $b_s$, for $s \in \{1,3,\ldots, m\}$ and,  clearly $a_2 b_s b_s a_2 \ll q_2 q_3 \ll w$.  Next,
	note that $b_1$ is not adjacent to $a_r$ for $2 \le r \le m$ and we have $a_rb_1b_1 a_r \ll q_2 q_3$. Finally, observe that $b_2$ is not adjacent to $a_s$, for $s \in \{1,3,\ldots,m\}$ and $a_sb_2b_2a_s \ll q_2 q_3$. \qed
\end{proof}

\begin{figure}[t]	
	\centering	
	\begin{tabular}{cc}
		\quad \qquad\begin{tikzpicture}[scale=1.2]
			\vertex (0) at (0,0) [label=above: {$0'$}] {};  
			\vertex (1) at (0.5,-0.3) [label=below:$ $] {};
			\vertex (2) at (0.7,-1) [label=left:{\tiny{$E_1$}}] {};
			\vertex (3) at (0.5,-1.7) [label=above:$ $] {};
			\vertex (10) at (0,-2.1) [label=below: {$0 $}] {};		
			\vertex (11) at (0.9,-0.1) [label=above:$ $] {};  
			\vertex (12) at (1.25,-0.5) [label=below:$ $] {};
			\vertex (13) at (1.35,-1) [label=left:\tiny{$E_2 $}] {};
			\vertex (14) at (1.25,-1.5) [label=above:$ $] {};
			\vertex (15) at (1,-1.9) [label=below:$ $] {};
			\vertex (21) at (1.3, 0.1) [label=above:$ $] {};  
			\vertex (22) at (1.7,-0.1) [label=below:$ $] {};
			\vertex (23) at (2.1,-0.5) [label=left:$ $] {};
			\vertex (24) at (2.25,-1) [label=right:\tiny{$E_m$}] {};
			\vertex (25) at (2.1,-1.5) [label=above:$ $] {};  
			\vertex (26) at (1.8,-1.9) [label=below:$ $] {};
			\vertex (27) at (1.3,-2.2) [label=left:$$] {};
			\node (A) at (1.4,-1) [label=right:$\cdots$] {};		
			\path 
			(0) edge [bend left=20] (1)
			(1) edge [bend left=10] (2)
			(2) edge [bend left=10] (3)
			(3) edge [bend left=10] (10)
			(0) edge [bend left=20] (11)
			(11) edge [bend left=10] (12)
			(12) edge [bend left=10](13)
			(13) edge [bend left=5] (14)
			(14) edge [bend left=10](15)
			(15) edge [bend left=20] (10)
			(0) edge [bend left=20]  (21)
			(21) edge [bend left=10] (22)
			(22) edge [bend left=10]  (23)
			(23) edge [bend left=10]  (24)
			(24) edge [bend left=10]  (25)
			(25) edge [bend left=10]  (26)
			(26) edge [bend left=10]  (27)
			(27) edge [bend left=20]  (10)
			(0) edge node[ left=-1] {\tiny{$E_0$}} (10);		
		\end{tikzpicture}&   \quad \qquad\begin{tikzpicture}[scale=0.8]
			\vertex (1) at (-2,1) [label= left:\tiny{$a_1$}] {}; 
			\vertex (2) at (-1,1) [label=left:\tiny{$a_2$}] {}; 
			\vertex (3) at (0,1) [label=below left : $ $] {}; 
			\vertex (4) at (1,1) [label=above:$ $] {}; 
			\vertex (5) at (2,1) [label= right:\tiny{$a_m$}] {}; 		
			\vertex (11) at (-2,-1.5) [label= left:\tiny{$b_1$}] {}; 
			\vertex (12) at (-1,-1.5) [label= left:\tiny{$b_2$}] {}; 
			\vertex (13) at (0,-1.5) [label=above left:$ $] {}; 
			\vertex (14) at (1,-1.5) [label=below:$ $] {}; 
			\vertex (15) at (2,-1.5) [label=right:\tiny{$b_m$}] {}; 		
			\vertex (0) at (-4,-0.3) [label=left:\tiny{$x$}] {};		
			\node (A) at (3,1) [label=right: {$K_m $}] {}; 
			\node (B) at (3,-1.5) [label=right: {$K_m $}] {}; 	
			\node (c) at (0,-0.2) [label=right:\Large{$\cdots$}] {}; 
			\vertex (21) at (-2.1, 0.2) [label= left:$ $ ] {}; 
			\vertex (22) at (-1.9,-0.9) [label=left:$ $] {};  		
			\vertex (31) at (-0.9, 0.5) [label= left:$ $ ] {}; 
			\vertex (32) at (-1,-0.1) [label=left:$ $] {}; 
			\vertex (33) at (-1.1, -0.6) [label= left:$ $ ] {}; 
			\vertex (34) at (-0.8,-1) [label=left:$ $] {}; 		
			\vertex (41) at (2.1, 0.7) [label= left:$ $ ] {}; 
			\vertex (42) at (2.2,0.3 ) [label=left:$ $] {}; 
			\vertex (43) at (2.1, -0.1) [label= left:$ $ ] {}; 
			\vertex (44) at (2,-0.5) [label=left:$ $] {}; 		
			\vertex (45) at (2.1, -0.8) [label= left:$ $ ] {}; 
			\vertex (46) at (2,-1.2) [label=left:$ $] {}; 
			\draw (0,1)[dashed] ellipse  (3cm and 0.9cm) ;
			\draw (0,-1.5)[dashed] ellipse (3cm and 0.9cm);		
			\path
			(1) edge[bend left] (2)
			(1) edge[bend left] (3)
			(1) edge[bend left] (4)
			(1) edge[bend left] (5)	
			(2) edge[bend left] (4)
			(2) edge[bend left] (5)
			(2) edge[bend left] (3)
			(3) edge[bend left] (4)
			(3) edge[bend left] (5)
			(4) edge[bend left] (5)
			(2) edge[bend left] (4)
			(11) edge[bend right] (12)
			(11) edge[bend right] (13)
			(11) edge[bend right] (14)
			(11) edge[bend right] (15)	
			(12) edge[bend right] (14)
			(12) edge[bend right] (15)
			(12) edge[bend right] (13)	
			(13) edge[bend right] (15)
			(13) edge[bend right] (14)
			(14) edge[bend right] (15)
			(12) edge[bend right] (14)		
			(1) edge[bend right =10] (21)
			(21) edge [bend left =10] node[left=-2] {\tiny{$E_1'$}} (22)
			(22) edge [bend left =5] (11)		
			(2) edge[bend left =12] (31)
			(31) edge [bend left =10] (32)
			(32) edge [bend right =15] node[right=-2] {\tiny{$E_2'$}}(33)
			(33) edge[bend left =10] (34)
			(34) edge [bend left =10] (12)		
			(5) edge[bend left =2] (41)
			(41) edge [bend left =2] (42)
			(42) edge [bend left =2] (43)
			(43) edge[bend right =10] node[right=-1] {\tiny{$E_m'$}} (44)
			(44) edge [bend left =10]  (45)
			(45) edge[bend left =10]  (46)		
			(46) edge [bend right =10] (15);		
			\path[dotted]
			(0) edge(1)
			(0) edge (2)		
			(0) edge (5)
			(0) edge(11)
			(0) edge (12)		
			(0) edge (15);
		\end{tikzpicture} \\[5pt]
		$M$ &  \qquad \qquad \quad$L(M)$
	\end{tabular}	
	\caption{A melon graph and its line graph}
	\label{L_M}	
\end{figure}
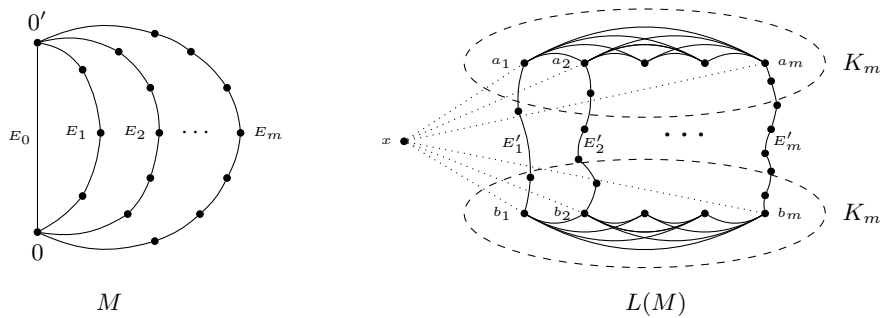

\begin{lemma}\label{adjacent_line}
	Let $M$ be a melon graph in which the endpoints $0$ and $0'$ are adjacent and that does not contain $A_3$ as an induced subgraph. Then, $L(M)$ is $3$-word-representable.
\end{lemma}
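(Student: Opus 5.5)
The plan is to reduce $L(M)$ to the graph $H$ of Lemma \ref{H}, and then attach the long constituent paths using Theorem \ref{subdiv} or a direct insertion into the word.

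Let $E_0=\langle 0,0'\rangle$ be the constituent path of length one, and let $E_1,\ldots,E_m$ be the remaining constituent paths. Each of these has length at least two. For $1\le i\le m$, let $a_i$ be the edge of $E_i$ incident to $0'$ and $b_i$ the edge incident to $0$, and let $x$ denote the edge $E_0$ as a vertex of $L(M)$. In $L(M)$:
\begin{itemize}
\item the $a_i$'s form a clique $K_m$, and so do the $b_i$'s;
\item $x$ is adjacent to every $a_i$ and every $b_i$;
\item if $E_i$ has length two, then $a_i$ and $b_i$ are adjacent;
\item if $E_i$ has length $k\ge 3$, then $a_i$ and $b_i$ are joined by an induced path of length $k-1\ge 2$ in $L(M)$ whose interior vertices have no other neighbours.
\end{itemize}
If three of the $E_i$ had length two, then $0,0'$ together with their three middle vertices would induce $A_3$. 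So the hypothesis means at most two constituent paths have length two, and we index them as $E_1,E_2$. (If fewer than two exist, we drop the corresponding edges $a_1b_1$ or $a_2b_2$. Deleting $b_1$ or $b_2$ from the permutations of Lemma \ref{H} and re-adding it handles this; alternatively, we check that the same permutations work with the edge removed.) Thus $L(M)$ is obtained from the graph $H$ of Lemma \ref{H}, with this same $m$ and possibly fewer $a_ib_i$ edges, by joining $a_i$ and $b_i$ by a path of length $k_i-1$ for each $i\ge 3$ (and $i\in\{1,2\}$ when $E_i$ is long), where $k_i$ is the length of $E_i$.

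Lemma \ref{H} gives the 3-uniform word $w=q_1q_2q_3$ representing $H$. When $k_i-1\ge 3$, we attach the path of length $k_i-1$ between $a_i$ and $b_i$ directly by Theorem \ref{subdiv}; this is legitimate because adding such paths keeps 3-word-representability at each step. The remaining case is $k_i=3$, where the new path in $L(M)$ has length exactly two, $\langle a_i,y_i,b_i\rangle$, and Theorem \ref{subdiv} does not apply. Here I will insert three copies of $y_i$ into $w$ by hand, imitating the replacement trick in the proof of Lemma \ref{non-adjacent_line}.

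The main obstacle is this insertion. For $i\ge 3$, the occurrences of $a_i,b_i$ in $w$ read $a_ib_i\,|\,a_i\cdots b_i\,|\,b_i\cdots a_i$, and $a_ib_i$ is a factor of $q_1$. The requirements on the three copies of $y_i$ are:
\begin{itemize}
\item between consecutive copies, and in the wrap-around sense, exactly one $a_i$ and one $b_i$ occur, so that $y_i$ alternates with both;
\item every other letter $z$ occurs twice in some gap, so that $y_i$ and $z$ do not alternate.
\end{itemize}
A natural first try is the following:
\begin{itemize}
\item put one copy immediately before the factor $a_ib_i$ in $q_1$ and one immediately after it, so the first gap contains only $a_i,b_i$;
\item put the third copy in $q_3$ between $b_i$ and $a_i$, so that the long middle gap contains the rest of $q_1$, all of $q_2$ and a prefix of $q_3$.
\end{itemize}
Most letters then occur twice in the middle gap, which kills alternation. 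The delicate letters are those near $a_i,b_i$, such as $a_j,b_j$ with $j$ close to $i$, and $x$. I will check these by a case analysis on the positions in $q_1,q_2,q_3$, moving the third copy within $q_3$ if necessary. If no placement works uniformly, the fallback is to first modify $w$: a cyclic shift is allowed for uniform words, and we can replace $q_3$ by another suitable permutation of $H$, creating a factor $b_ia_i$ in $q_3$ as well so that the insertion becomes local. Insertions for different $i$ involve disjoint letters $y_i$ and can be done one after another, checking that a new $y_i$ does not alternate with earlier $y_j$. Having done so, Theorem \ref{subdiv} finishes the longer paths, and $L(M)$ is $3$-word-representable.
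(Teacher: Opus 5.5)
Your overall plan matches the paper's: start from $w=q_1q_2q_3$ of Lemma \ref{H}, splice in a letter $y_i$ for every constituent path of length three, and only afterwards invoke Theorem \ref{subdiv} for the longer paths. However, the step that carries the proof is never done, and the one concrete placement you give fails. With copies of $y_i$ on both sides of the factor $a_ib_i$ of $q_1$ and the third copy in $q_3$ between $b_i$ and $a_i$, the restriction to $\{a_i,b_i,y_i\}$ is $y_ia_ib_iy_i\,a_ib_i\,b_iy_ia_i$. Hence the restriction to $\{y_i,b_i\}$ is $y_ib_iy_ib_ib_iy_i$. The middle gap contains both the $b_i$ of $q_2$ and the $b_i$ of $q_3$, which violates your own first requirement. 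Since $a_i,b_i$ occur in the cyclic pattern $a_ib_i\,|\,a_ib_i\,|\,b_ia_i$, the third copy is forced to lie after $b_i$ in $q_2$ and before $b_i$ in $q_3$; the paper puts it immediately after $b_i$ in $q_2$.

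Conversely, the ``delicate letters'' you planned to treat by case analysis are not an issue. One gap between consecutive copies of $y_i$ is exactly $a_ib_i$. So for every other letter $z$ (namely $x$, any $a_j,b_j$, or any earlier $y_j$), the restriction to $\{y_i,z\}$ contains the factor $y_iy_i$, and $z$ cannot alternate with $y_i$. This single observation is the missing idea that makes the insertion work uniformly, with no case analysis or fallback.

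The second gap is the case where $E_1$ or $E_2$ has length at least three. The word of Lemma \ref{H} represents a graph in which $a_1b_1$ and $a_2b_2$ are edges. So ``the same permutations work with the edge removed'' is false, and ``deleting $b_1$ and re-adding it'' is not a construction. You need an explicit word here, both to make $a_i,b_i$ non-adjacent before attaching a path and to keep a factor structure for the insertion.

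The paper edits $w$ locally:
\begin{itemize}
\item For $E_1$ of length three, it replaces the factor $a_1b_1$ of $q_3$ by $y_1b_1a_1y_1$ and puts $y_1$ right after $b_1$ in $q_1$. Swapping two adjacent letters changes only the relative order of $a_1$ and $b_1$, which kills their alternation, and the gap $b_1a_1$ again isolates $y_1$ from all other letters.
\item For $E_2$ of length three, it replaces the factor $b_2a_2$ of $q_2$ by $y_2a_2b_2y_2$ and puts $y_2$ right after $a_2$ in $q_3$.
\item For length at least four, it performs the length-three step, deletes $y_i$, and only then applies Theorem \ref{subdiv}.
\end{itemize}
Finally, all explicit edits must precede every use of Theorem \ref{subdiv}, because that theorem returns an uncontrolled word without the factors you rely on. The paper secures this by ordering the paths by non-decreasing length.
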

\begin{proof}
	Let $M$ be $(E_0, E_1, \ldots, E_m)$ such that the constituent paths are arranged in the non-decreasing order by their lengths. Thus, $E_0$ is the constituent path of length one. Further, since $M$ does not contain $A_3$ as an induced subgraph, $M$ must have at most two constituent paths each of length two, i.e., $E_1$ and $E_2$ are possibly of length two, and the remaining are of length at least three. 
	
	As shown in Fig. \ref{L_M}, note that $L(M)$ has two copies of $K_m$, with vertex sets, say $\{a_1, \ldots, a_m\}$ and $\{b_1, \ldots, b_m\}$, as induced subgraphs connected by paths  $E_i'$ ($1 \le i \le m$) between $a_i$ and $b_i$ ($1 \le i \le m$) and having a vertex  $x$ adjacent to all the vertices $a_i$'s and $b_i$'s. Here, the vertex $x$ corresponds to the path $E_0$ of $M$ and, for $1 \le i \le m$, the path $E_i'$ corresponds to the path $E_i$ of $M$ and of length one less than the length of $E_i$.         
	
	In the following, we construct a 3-uniform word representing $L(M)$ by iteratively extending the word $w$ representing the graph $H$ given in Lemma \ref{H}. 
	
	Case-1: Suppose the length of $E_i$ (for $i = 1, 2$) is two. Note that $H$ is clearly an induced subgraph of $L(M)$ in which the edges $\{a_1, b_1\}$ and $\{a_2, b_2\}$ are the paths $E_1'$ and $E_2'$, respectively. Set $H' = H$ and $w' = w$.
	
	Case-2: Suppose $E_1$ is of length three. Update the path $E_1'$ in the graph $H$ with an intermediate vertex $y_1$ so that $E_1' = \langle a_1, y_1, b_1\rangle$ is a path of length two. Let $H_1$ be the resultant graph. Accordingly, extend the word $w$ to $w_1$ by replacing the factor $a_1b_1$ with the word $y_1 b_1 a_1 y_1$, and the first occurrence of $b_1$ with $b_1y_1$. Note that the replacement of the word $y_1 b_1 a_1 y_1$ ensures that the vertices $a_1$ and $b_1$ do not alternate and $y_1$ alternates only with $a_1$ and $b_1$. Hence, the 3-uniform word $w_1$ represents $H_1$. 
	
	Case-3: Suppose $E_2$ is of length three. If $E_1$ is of length two, then set $w_1 = w$ and $H_1 = H$. Otherwise, consider the graph $H_1$ and its word-representant $w_1$ from Case-2.  Update the path $E_2'$ in the graph $H_1$ with an intermediate vertex $y_2$ so that $E_2' = \langle a_2, y_2, b_2\rangle$ is a path of length two. Let $H_2$ be the resultant graph. In the word $w_1$, replace the factor $b_2a_2$ with the word $y_2 a_2b_2y_2$ and the third occurrence of $a_2$ with $a_2y_2$. Note that the resultant 3-uniform word $w_2$ represents $H_2$. Set $H' = H_2$ and $w' = w_2$.
	
	Case-4: For $i = 1$ or $2$, suppose $E_i$ is of length $k$, where $k \ge 4$. Consider the graph $H_2$ from Case-3 and its word-representant $w_2$. Remove the intermediate vertex $y_i$ of $E_i'$ in $H_2$ and connect $a_i$ and $b_i$ with a path of length $k-1$. By Theorem \ref{subdiv}, the resultant graph, say $H'$, is 3-word-representable. Let $w'$ be the word-representant of $H'$. 
	
	Suppose each of $E_3, \ldots, E_t$ (for some $t \le m$) is of length three. Update the graph $H'$ by the paths $E_i' = \langle a_i,y_i, b_i \rangle$, for $3 \le i \le t$. Let $H''$ be the resultant graph. Accordingly, extend $w'$ to $w''$ by replacing the factor $a_ib_i$ in $w'$ with the word $y_i a_i b_i y_i$ and the second occurrence of $b_i$ by $b_i y_i$.  For $3 \le i \le t$, the factor $y_i a_i b_i y_i$ ensures that $y_i$ alternates only with $a_i$ and $b_i$ in $w''$. Hence, the 3-uniform word $w''$ represents $H''$.
	
	For the paths $E_{t+1}, \ldots, E_m$, which are of length at least four in $M$, consider the graph $H''$ and add paths $E_i'$ between the vertices $a_i$ and $b_i$. Note that the resultant graph is $L(M)$. Since $H''$ is 3-word-representable, by Theorem \ref{subdiv}, $L(M)$ is 3-word-representable.	\qed	 
\end{proof}

From lemmas \ref{non-adjacent_line}, \ref{l_a3} and \ref{adjacent_line}, we conclude the following characterization of the word-representability of the line graph of a melon graph. 

\begin{theorem}\label{main1}
	Let $M$ be a melon graph. The line graph $L(M)$ is word-representable if and only if $M$ does not contain $A_3$ as an induced subgraph.
\end{theorem}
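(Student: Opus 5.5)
The plan is to assemble the three preceding lemmas, splitting according to whether the endpoints $0$ and $0'$ are adjacent in $M$.

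For the \emph{if} direction, suppose $M$ has no induced $A_3$. If $0$ and $0'$ are non-adjacent, Lemma \ref{non-adjacent_line} shows that $L(M)$ is $3$-word-representable. If they are adjacent, Lemma \ref{adjacent_line} applies directly, since its hypothesis is exactly that $M$ has no induced $A_3$. The degenerate cases are covered by the discussion preceding Lemma \ref{non-adjacent_line}: two constituent paths make $M$ a cycle with $L(M)\cong M$, and the case of three constituent paths with $0,0'$ adjacent was handled there. In every case $L(M)$ is word-representable.

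For the \emph{only if} direction, suppose $M$ contains $A_3$ as an induced subgraph. The key observation is that if $H$ is an induced subgraph of $G$, then $L(H)$ is an induced subgraph of $L(G)$. Indeed, the edges of $H$ form a subset of $E(G)$, and two of them share a vertex in $G$ if and only if they share it in $H$. Word-representability is hereditary, since restricting a representing word to a subset of letters represents the induced subgraph. So if $L(M)$ were word-representable, then so would be $L(A_3)$, contradicting Lemma \ref{l_a3}.

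The step needing the most care is confirming that the two directions use the same notion of containment. Lemma \ref{adjacent_line} uses the absence of induced $A_3$ to conclude that at most two constituent paths have length two. Conversely, an $A_3$ in a melon graph can only arise as the edge $00'$ together with three length-two constituent paths, and it is then automatically induced. So the hypotheses match the statement exactly, and no further obstacle remains.
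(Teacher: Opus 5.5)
Your proposal is correct and follows the paper's route: sufficiency comes from Lemma~\ref{non-adjacent_line} (endpoints non-adjacent) and Lemma~\ref{adjacent_line} (endpoints adjacent), and necessity from Lemma~\ref{l_a3}. Your explicit argument that $L(A_3)$ is an induced subgraph of $L(M)$, so that heredity of word-representability applies, and your check that any copy of $A_3$ in a melon graph is automatically induced, spell out steps the paper leaves implicit.
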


Further, we have the following result for the representation number of the line graph of a melon graph.

\begin{theorem}\label{main2}
	For a melon graph $M$, if $L(M)$ is word-representable, then $\mathcal{R}(L(M)) \le 3$. Furthermore, $\mathcal{R}(L(M)) = 3$ if and only if $M$ consists of three constituent paths of length at least two. 
\end{theorem}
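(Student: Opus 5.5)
The plan has two parts: first assemble the upper bound $\mathcal{R}(L(M)) \le 3$ from the case analysis already carried out, then prove the characterization of $\mathcal{R}(L(M)) = 3$. I read ``$M$ consists of three constituent paths of length at least two'' as ``$M$ has at least three constituent paths of length at least two''; the argument below is written for that reading.

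\emph{Upper bound.} If $M$ has at most two constituent paths, then $M$ is a path or a cycle, so $L(M)$ is a path or is isomorphic to $M$, and $\mathcal{R}(L(M)) \le 2$. Otherwise, if $0$ and $0'$ are non-adjacent, Lemma \ref{non-adjacent_line} gives that $L(M)$ is $3$-word-representable. If $0$ and $0'$ are adjacent, word-representability of $L(M)$ means, by Theorem \ref{main1}, that $M$ has no induced $A_3$, and then Lemma \ref{adjacent_line} applies. In every case $\mathcal{R}(L(M)) \le 3$.

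\emph{If $M$ has at most two constituent paths of length at least two.} Then $M$ is a path, a cycle, or a cycle together with the chord $\{0,0'\}$. The first two have line graphs with $\mathcal{R} \le 2$, as noted above. For the chord case, the explicit $2$-uniform word constructed just before Fig. \ref{line_cartesian} represents $L(M)$. Hence $\mathcal{R}(L(M)) \le 2$ throughout.

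\emph{If $M$ has three constituent paths $E_1,E_2,E_3$ of length at least two.} Let $M'$ be the melon $(E_1,E_2,E_3)$, viewed as a subgraph of $M$ obtained by deleting edges and vertices. Then $L(M')$ is an induced subgraph of $L(M)$, because adjacency in a line graph depends only on the two edges involved. The graph $L(M')$ consists of two triangles, formed by the edges at $0$ and at $0'$, joined by three vertex-disjoint paths, one for each $E_i$.

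Now take an internal vertex $v$ of degree two on one of these joining paths. Local complementation at $v$ adds the edge between its two neighbours, and deleting $v$ then contracts that path by one. Repeating this shows that the prism $Pr_3 = K_3 \square K_2$ is a vertex-minor of $L(M')$. Circle graphs are exactly the graphs with $\mathcal{R} \le 2$, and they are closed under vertex-minors \cite{bouchet}. The prism has representation number three \cite{kitaev13}, so $Pr_3$ is not a circle graph. Therefore neither $L(M')$ nor $L(M)$ is a circle graph, which gives $\mathcal{R}(L(M)) \ge 3$, and equality follows from the upper bound.

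The main point to check carefully is the vertex-minor reduction. The local complementation must be performed only at vertices of degree exactly two, so that no other adjacencies are altered. When every $E_i$ has length exactly two, there are no internal vertices to remove and $L(M')$ is already $Pr_3$. If $\mathcal{R}(Pr_3)=3$ were not available directly, I would instead use the local-complementation argument of Lemma \ref{f_3 ref} to show that $Pr_3$ is not a circle graph.
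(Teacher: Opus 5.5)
Your proposal is correct and follows the same route the paper takes implicitly. The upper bound comes from Lemmas \ref{non-adjacent_line} and \ref{adjacent_line} together with Theorem \ref{main1}. The case of at most two paths of length at least two is handled by the explicit $2$-uniform words for a path, a cycle, and the cycle with chord $\{0,0'\}$. The lower bound comes from $K_3 \square K_2$ being a non-circle vertex-minor of $L(M)$. Your reading of ``three'' as ``at least three'' is the one the paper intends, and your degree-two smoothing via local complementation spells out the vertex-minor claim that the paper only asserts.
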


Note that the graph $K_3 \square K_2$ is not a circle graph. Further, any melon graph $M$ with three constituent paths of length at least two always contains $K_3 \square K_2$ as a vertex-minor. Hence, we have the following corollary of Theorem \ref{main2}.

\begin{corollary}
	For a melon graph $M$, let $L(M)$ be word-representable. Then, $\mathcal{R}(L(M)) = 3$ if and only if $L(M)$ contains $K_3 \square K_2$ as a vertex-minor.
\end{corollary}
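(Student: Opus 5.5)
The plan is to split the statement into an upper bound $\mathcal{R}(L(M)) \le 3$ and a characterization of when equality holds. I read ``$M$ consists of three constituent paths of length at least two'' as ``$M$ has at least three constituent paths of length at least two''. Both parts reduce to the structure of $L(M)$ already described: two cliques $\{a_1,\ldots,a_m\}$ and $\{b_1,\ldots,b_m\}$, joined by the paths $E_i'$, together with the vertex $x$ when $0$ and $0'$ are adjacent.

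\textbf{Upper bound.} I split according to whether $0$ and $0'$ are adjacent.
\begin{enumerate}
\item If $M$ has at most two constituent paths, then $M$ is a path or a cycle. Hence $L(M)$ is a path or a cycle, and $\mathcal{R}(L(M)) \le 2$.
\item If $0$ and $0'$ are non-adjacent, every constituent path has length at least two, and Lemma \ref{non-adjacent_line} gives $3$-word-representability.
\item If $0$ and $0'$ are adjacent and $L(M)$ is word-representable, then $M$ has no induced $A_3$ by Theorem \ref{main1}. Lemma \ref{adjacent_line} then gives $3$-word-representability.
\end{enumerate}
In all cases $\mathcal{R}(L(M)) \le 3$.

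\textbf{The case $\mathcal{R}(L(M)) \le 2$.} Suppose $M$ has at most two constituent paths of length at least two. Then $M$ is one of the following:
\begin{enumerate}
\item a path, i.e.\ one constituent path, possibly $K_2$;
\item a cycle, i.e.\ two constituent paths;
\item three constituent paths, one of which is the edge $00'$.
\end{enumerate}
In the first two cases $L(M)$ is a path or cycle, so $\mathcal{R}(L(M)) \le 2$. The third case is exactly the configuration of Fig.~\ref{example_f}, for which an explicit $2$-uniform word was constructed at the start of this section. So here $\mathcal{R}(L(M)) \le 2$.

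\textbf{The case $\mathcal{R}(L(M)) = 3$.} Suppose $M$ has three constituent paths $E_1,E_2,E_3$ of length at least two. I will show that $K_3 \square K_2$ is a vertex-minor of $L(M)$, in three steps.
\begin{enumerate}
\item In $L(M)$, delete the vertices of the paths $E_j'$ for $j \ge 4$, together with $a_j$ and $b_j$, and delete $x$ if present. This leaves the cliques $\{a_1,a_2,a_3\}$ and $\{b_1,b_2,b_3\}$ joined by the paths $E_1',E_2',E_3'$.
\item Each $E_i'$ whose length exceeds one has an internal vertex $z$ of degree two with neighbours $p$ and $q$. Locally complementing at $z$ adds the edge $pq$, and then deleting $z$ replaces $p z q$ by the edge $pq$. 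Since $z$ has only these two neighbours, nothing else in the graph changes.
\item Repeating step 2 shortens each $E_i'$ to a single edge $a_ib_i$. The result is $K_3 \square K_2$, the prism $Pr_3$.
\end{enumerate}
The graph $K_3 \square K_2$ is not a circle graph: by \cite{broere_2019} its representation number is $3$, and circle graphs are exactly the graphs of representation number at most two. Circle graphs are closed under vertex-minors \cite{bouchet}, so $L(M)$ is not a circle graph. Hence $\mathcal{R}(L(M)) \ge 3$, and equality follows from the upper bound. This also yields the corollary stated after the theorem.

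The main obstacle I expect is the vertex-minor reduction in step 2. I need to check that the local complementation at a degree-two vertex really only adds the single edge $pq$, and does not disturb the clique edges when $p$ or $q$ is one of the $a_i$ or $b_i$. The other point needing care is the exact meaning of ``consists of'', which must cover melon graphs with more than three paths of length at least two, as well as the case where the edge $00'$ is present.
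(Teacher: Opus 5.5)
Your proposal is correct and follows essentially the paper's route. The corollary is Theorem~\ref{main2} (which you re-derive from Lemmas~\ref{non-adjacent_line} and \ref{adjacent_line}, Theorem~\ref{main1} and the explicit $2$-uniform words) combined with the non-circularity of $K_3\square K_2$ and the closure of circle graphs under vertex-minors. You also spell out the delete-and-smooth reduction of $L(M)$ to $K_3\square K_2$, which the paper only asserts.

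Your step-2 worry is unfounded. Local complementation at $z$ toggles only the pair $N(z)=\{p,q\}$, and that pair is a non-edge because $E_i'$ is an induced path of length at least two. One small point: the paper cites \cite{broere_2019} only for the upper bound $\mathcal{R}(K_m\square K_2)\le 3$. So the fact that $K_3\square K_2$ is not a circle graph should rest on a separate source or a short chord-diagram check, not on that citation.
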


\begin{figure}[t]
	\centering
	\begin{tabular}{cc}
		\begin{tikzpicture}[scale=1]
			\vertex (1) at (0,3) [label=above:$ $] {};  
			\vertex (2) at (-1,2) [label=left:$ $] {}; 
			\vertex (4) at (1,2) [label=right:$ $] {}; 
			\vertex (5) at (0,1) [label=below:$ $] {}; 
			\path
			(1) edge node[left] {$e_1$} (2)
			(1) edge node[left=-2] {$e_0$} (5)
			(1) edge node[right] {$e_2$} (4)
			(2) edge node[left] {$e_1'$} (5)
			(4) edge node[right] {$e_2'$} (5);
		\end{tikzpicture}& \qquad \qquad \qquad \begin{tikzpicture}[scale=0.8]
			\vertex (1) at (0,0) [label=below:$e_1'$] {};  
			\vertex (2) at (2,0) [label=below:$e_2'$] {}; 
			\vertex (3) at (1,1) [label=left:$e_0$] {}; 
			\vertex (4) at (0,2) [label=above:$e_1$] {}; 
			\vertex (5) at (2,2) [label=above:$e_2$] {}; 		
			\path
			(1) edge (2)
			(1) edge (4)
			(2) edge (5)
			(4) edge (5)
			(3) edge (1)
			(3) edge (2)
			(3) edge (4)
			(3) edge (5) ;
		\end{tikzpicture}\\
		$A_2$ & \qquad \qquad $L(A_2)$ \\
	\end{tabular}
	\caption{Triangular book graph with two pages and its corresponding line graph}
	\label{com_book}
\end{figure}
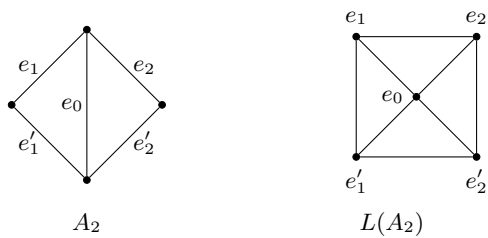

Finally, we characterize the comparability graphs within the class of line graphs of melon graphs. Also, we present the \textit{prn} of the respective graphs.

\begin{theorem}
	Among the line graphs of melon graphs, the following are the only comparability graphs: $L(P_n)$, $L(C_{2n})$, for $n \ge 2$, and $L(A_2)$. Here, $P_n$ is the path on $n$ vertices, $C_{2n}$ is the cycle on $2n$ vertices and $A_2$ is the graph given in Fig. \ref{com_book}.	 Moreover, the following statements regarding the \textit{prn} are evident.   
	\begin{enumerate}
		\item $\mathcal{R}^p(L(P_2)) = 1$ and $\mathcal{R}^p(L(P_n)) = 2$, for $n \ge 3$. 
		\item $\mathcal{R}^p(L(C_4)) = 2$ and $\mathcal{R}^p(L(C_{2n})) = 3$, for $n \ge 3$. 
		\item  $\mathcal{R}^p(L(A_2)) = 2$. 
	\end{enumerate}
\end{theorem}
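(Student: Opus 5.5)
We split the argument by the number of constituent paths of the melon graph $M = (E_1,\ldots,E_m)$ and use a single obstruction: every line graph is claw-free, and a claw-free comparability graph cannot contain an odd hole or the complement of one. It is more economical to exhibit, for each non-listed melon graph, an induced subgraph of $L(M)$ that is known not to be a comparability graph.

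\textbf{Small cases.} If $m \le 2$, then $M$ is a path or a cycle, and $L(P_n)=P_{n-1}$ and $L(C_n)=C_n$. Paths are bipartite. Cycles are comparability graphs exactly when they are even or equal to $K_3$. Since $L(C_3)=C_3$ and $L(P_4)=P_3$, the triangle is already covered by $L(P_n)$ with $n=4$ (and by $L(K_{1,3})$, which is not a melon graph). So only $L(C_{2n})$ survives. We must also check that $K_3 = L(C_3)$ is listed; indeed $K_3 \cong L(P_4)$, so it is covered up to isomorphism.

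\textbf{At least three paths.} Now let $m \ge 3$.
\begin{itemize}
\item[(a)] Suppose $0$ and $0'$ are adjacent. Because $M$ is simple, at most one path has length one, so at least two paths $E_1,E_2$ have length $\ge 2$.
\begin{itemize}
\item If some $E_i$ has length $\ge 3$, then $E_0$ together with $E_i$ forms a cycle of length $\ge 4$. The edge $E_0$ and the two end-edges of $E_i$ then give an induced configuration in $L(M)$ whose neighbourhood of the vertex $e_0$ is not a comparability graph. As in Lemma \ref{l_a3}, we intend to locate an odd hole, or the complement of one, in $L(M)$ directly. For three paths of lengths $1,2,3$, the line graph contains a $C_5$ obtained from the $4$-cycle of lengths $1,3$ by using the two end-edges of the $2$-path together with $e_0$. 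We will verify this pattern in general.
\item If all other paths have length two and there are at least three of them, then $M \supseteq A_3$ and $L(M)$ contains $L(A_3)$, which is not word-representable, hence not a comparability graph.
\item This leaves exactly $A_2$.
\end{itemize}
\item[(b)] Suppose $0$ and $0'$ are non-adjacent. The three edges at $0$ form a triangle $e_1e_2e_3$ in $L(M)$, and likewise $e_1'e_2'e_3'$ at $0'$. Connecting them are paths of lengths $\ell_i-2$ between $e_i$ and $e_i'$.
\begin{itemize}
\item If every $\ell_i=2$, this is the prism $K_3\square K_2$, which is not a comparability graph by Lemma \ref{l_a3}.
\item Otherwise, two of the three connecting paths together with the triangle edges form a cycle of length $\ell_i+\ell_j-1$. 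Choosing $i,j$ appropriately should give an odd hole of length $\ge5$. If no such choice exists, we use a prism-like subdivision and must exhibit its non-transitivity directly, for example by forcing orientations around the triangles.
\end{itemize}
\end{itemize}

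\textbf{Main obstacle.} The hard step is this last subcase: subdivided prisms where every induced cycle is even. In that case we will argue via forcing. A triangle forces a transitive orientation with a source and a sink. Propagating the forced orientations (an induced $P_3$ with ends $a,c$ forces $ab,cb$ to be oriented alike) along the connecting paths produces a contradiction between the two triangles.

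\textbf{prn values.} The prn statements follow routinely:
\begin{itemize}
\item $L(P_2)=K_1$, and every other path is a permutation graph that is not complete, giving prn $2$.
\item $C_4$ is a permutation graph, while $C_{2n}$ for $n\ge3$ has prn $3$ by the theorem on $C_{2n}$ in Section \ref{char_prn}.
\item $L(A_2)$ is the wheel $W_4$, i.e.\ $C_4$ joined with a universal vertex. We give an explicit two-permutation word for it, obtained by appending the vertex $e_0$ to the end of each permutation representing $C_4$.
\end{itemize}
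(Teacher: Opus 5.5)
Your plan for case (a) fails when every path other than $E_0$ has odd length at least three, e.g.\ $M=(E_0,E_1,E_2)$ with $E_1=\langle 0,a,a',0'\rangle$ and $E_2=\langle 0,b,b',0'\rangle$ (or $B_3$). Such an $M$ is bipartite, so $L(M)$ is perfect (line graphs of bipartite graphs are) and has no odd hole and no odd antihole. The neighbourhood of $e_0$ induces $2K_2$; for three paths it is always $P_4$, $C_4$ or $2K_2$, which are comparability graphs. So neither witness you propose exists, and no ``$C_5$ pattern'' can be verified in general.

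The missing idea is the local obstruction behind the paper's graphs $S_1$ and $S_2$ (Fig.~\ref{non_com}). Set $f=\{0,a\}$, $h=\{a,a'\}$, $f'=\{a',0'\}$, $g=\{0,b\}$, $k=\{b,b'\}$; then $\{g,e_0,f,h,f',k\}$ induces $S_2$. In forcing terms, each vertex of the triangle $\{e_0,f,g\}$ has a neighbour outside it ($f'$, $h$, $k$ respectively) that is non-adjacent to the other two triangle vertices. A transitive orientation orders the triangle linearly. At its middle vertex $z$, with such a neighbour $y$, the edge $zy$ would have to agree in direction (relative to $z$) with both triangle edges at $z$, which point opposite ways.

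This contradiction is immediate and confined to one triangle. It needs no propagation along the connecting paths and no interplay between two triangles, contrary to how you describe your ``main obstacle''. Applied to the three edges at $0$, it settles case (b) for every choice of parities, prism included; the second edge of each $E_i$ is the outside neighbour, since $\ell_i\ge 2$. Applied to $\{e_0,\{0,v_1\},\{0,v_2\}\}$, with $v_i$ the neighbour of $0$ on $E_i$, it settles case (a) whenever two paths other than $E_0$ have length at least three. The remaining subcases of (a), other than $A_2$ and $A_3$, contain an odd cycle of length at least five in $M$, hence an odd hole in $L(M)$.

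There are also smaller errors.
\begin{enumerate}
\item In (b) your counts are off by one. The path of $L(M)$ from $e_i$ to $e_i'$ consists of the $\ell_i$ edges of $E_i$, so it has length $\ell_i-1$, and the cycle through two such paths has length $\ell_i+\ell_j$. Odd holes therefore occur exactly when two paths have different parities.
\item In the small cases, $L(P_4)=P_3$ is not $K_3$. The triangle $L(C_3)=K_3$ is a comparability graph genuinely missing from the list; the paper's proof slips at the same point by declaring odd cycles non-comparability. The statement needs $L(C_3)$ added, not an argument around it.
\item $L(P_3)=K_2$ is complete, so $\mathcal{R}^p(L(P_3))=1$, and the second claim of item 1 holds only for $n\ge 4$. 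Your assertion that the other line graphs of paths are all non-complete is false there.
\end{enumerate}
Your treatment of $L(C_{2n})$, and of $L(A_2)\cong W_4$ by appending $e_0$ to both permutations of $C_4$, is fine.
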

\begin{proof}
	Suppose $M$ is a melon graph with three constituent paths of length at least two. Clearly, the line graph of $M$ contains $K_3 \square K_2$ as a vertex minor (see Fig. \ref{line_cartesian}). Note that $K_3 \square K_2$ is the prism $Pr_3$, which is not a comparability graph (see Lemma \ref{l_a3}). Also, note that the graph obtained by applying subdivision on the edges joining two copies of $K_3$ in  $K_3 \square K_2$ contains $S_1$ or $S_2$ given in Fig. \ref{non_com} as an induced subgraph. It is known that $S_1$ and  $S_2$ are not comparability graphs (see the characterization of comparability graphs in \cite{Gallai}). Thus, $M$ is not a comparability graph. Hence, $M$ can have at most two paths or, whenever $M$ has three paths, it has a path of length one. 
	
	If $M$ has only one constituent path, then $M = P_n$, for some $n \ge 2$. Then, $L(M) = P_{n-1}$, and hence, it is a permutation graph. 
	
	Suppose $M = (E_1, E_2)$. As $M$ is a cycle, $L(M)$ is isomorphic to $M$. Note that odd cycles are not comparability graphs. Hence, in this case, the line graph of $M$ is a comparability graph if both $E_1$ and $E_2$ are of the same parity, i.e., $M$ is an even cycle.
	
	Suppose $M = (E_1, E_2, E_3)$ is a melon graph, where $E_1$ is of length one. We have the following cases:
	\begin{itemize}
		\item $E_2$ and $E_3$ are not of the same parity: The line graph of $M$ contains an odd cycle of length at least five as an induced subgraph. Thus, $L(M)$ is not a comparability graph.		
		\item $E_2$ and $E_3$ are of odd parity: From Fig.  \ref{example_f}, it can be observed that the line graph $L(M)$ contains either $S_1$ or $S_2$, given in Fig. \ref{non_com}, as an induced subgraph. Thus, $L(M)$ is not a comparability graph.	
		\item $E_2$ and $E_3$ are of even parity: 
		\begin{itemize}
			\item[-] If $E_2$ or $E_3$ is of length at least four, then $M$ contains an odd cycle of length at least five, which includes $E_1$. Accordingly, the line graph of $M$ will also contain an odd cycle as an induced subgraph so that $L(M)$ is not a comparability graph.			
			\item[-] If both $E_2$ and $E_3$ are of length two then $M = A_2$, as shown in Fig. \ref{com_book}. Accordingly, $L(A_2)$ is a permutation graph (refer to the forbidden induced subgraphs of permutation graphs in \cite{Gallai}).
		\end{itemize}
	\end{itemize}
	\qed
\end{proof}

\begin{figure}[t]
	\centering
	\begin{tabular}{cc}
		\begin{tikzpicture}[scale=0.7]
			\vertex (1) at (0,0) [label=below:$ $] {};  
			\vertex (2) at (-1,-1.3) [label=below:$ $] {}; 
			\vertex (3) at (1,-1.3) [label=left:$ $] {}; 
			\vertex (4) at (-1.8,-2.3) [label=above:$ $] {}; 
			\vertex (5) at (1.8,-2.3) [label=above:$ $] {}; 
			\vertex (6) at (0,0.7) [label=above:$ $] {}; 		
			\path
			(1) edge (2)
			(1) edge (3)
			(2) edge (3)
			(2) edge (4)
			(3) edge (5)
			(1) edge (6);
		\end{tikzpicture}  & \qquad \qquad
		\begin{tikzpicture}[scale=0.7]
			\vertex (1) at (0,0) [label=below:$ $] {};  
			\vertex (2) at (-1,-1) [label=below:$ $] {}; 
			\vertex (3) at (1,-1) [label=left:$ $] {}; 
			\vertex (4) at (-1,-2.3) [label=above:$ $] {}; 
			\vertex (5) at (1,-2.3) [label=above:$ $] {}; 
			\vertex (6) at (0,0.7) [label=above:$ $] {}; 		
			\path
			(1) edge (2)
			(1) edge (3)
			(2) edge (3)
			(2) edge (4)
			(3) edge (5)
			(1) edge (6)
			(4) edge (5);
		\end{tikzpicture}\\
		$S_1$ & \qquad \qquad $S_2$
	\end{tabular}
	\caption{Non-comparability graphs}
	\label{non_com}
\end{figure}
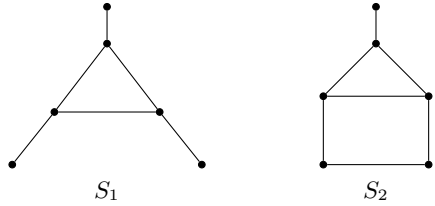

\section{Conclusion}

The melon graphs are series-parallel graphs. It is known that the class of series-parallel graphs is a subclass of 3-colorable planar graphs (see \cite{dissaux}), and hence a simple series-parallel graph is word-representable. Accordingly, one may try to extend the work presented in this paper to series-parallel graphs. More precisely, the following questions arise in this connection. 
\begin{enumerate}
	\item What is the representation number of a series-parallel graph?
	\item Being planar comparability graphs, the \textit{prn} of permutationally representable series-parallel graphs is at most four. Thus, one can ask for their classification between the \textit{prn} three and four. 
	\item Determine a finite list of forbidden (induced) subgraphs for characterizing the word-representability of line graphs of series-parallel graphs.
\end{enumerate}

In the literature, various authors have characterized   the word-representability of some subclasses of planar graphs through operations such as triangulation and face subdivison on a known word-representable planar graph \cite{Akrobotu_2015,chen_2016,Glen_2017}. In this work, we characterized the word-representability of line graph operation on  melon graphs. It will be interesting to study the word-representability of triangulation or face-subdivision of melon graphs.

\section{Acknowledgements}
	The authors are thankful to the referee for his/her comments which have improved the manuscript significantly. The first author is also thankful to the Council of Scientific and Industrial Research (CSIR), Government of India, for awarding the research fellowship for pursuing Ph.D. at IIT Guwahati.

%\bibliographystyle{abbrv}
%\bibliography{thesisref}

\begin{thebibliography}{10}
	
	\bibitem{Akgun_2019}
	O.~Akg\"{u}n, I.~Gent, S.~Kitaev, and H.~Zantema.
	\newblock Solving computational problems in the theory of word-representable
	graphs.
	\newblock {\em J. Integer Seq.}, 22(2):Art. 19.2.5, 18, 2019.
	
	\bibitem{Akrobotu_2015}
	P.~Akrobotu, S.~Kitaev, and Z.~Mas\'{a}rov\'{a}.
	\newblock On word-representability of polyomino triangulations.
	\newblock {\em Siberian Adv. Math.}, 25(1):1--10, 2015.
	
	\bibitem{Baker_1972}
	K.~A. Baker, P.~C. Fishburn, and F.~S. Roberts.
	\newblock Partial orders of dimension {$2$}.
	\newblock {\em Networks}, 2:11--28, 1972.
	
	\bibitem{biro_2021}
	C.~Bir\'o, B.~o. Bosek, H.~C. Smith, W.~T. Trotter, R.~Wang, and S.~J. Young.
	\newblock Planar posets that are accessible from below have dimension at most
	6.
	\newblock {\em Order}, 38(1):21--36, 2021.
	
	\bibitem{bouchet}
	A.~Bouchet.
	\newblock Circle graph obstructions.
	\newblock {\em J. Combin. Theory Ser. B}, 60(1):107--144, 1994.
	
	\bibitem{Brause_2017}
	C.~Brause, A.~Kemnitz, M.~Marangio, A.~Pruchnewski, and M.~Voigt.
	\newblock Sum choice number of generalized {$\theta$}-graphs.
	\newblock {\em Discrete Math.}, 340(11):2633--2640, 2017.
	
	\bibitem{broere_2019}
	B.~Broere and H.~Zantema.
	\newblock The {$k$}-dimensional cube is {$k$}-representable.
	\newblock {\em J. Autom. Lang. Comb.}, 24(1):3--12, 2019.
	
	\bibitem{Carraher_2015}
	J.~M. Carraher, T.~Mahoney, G.~J. Puleo, and D.~B. West.
	\newblock Sum-paintability of generalized theta-graphs.
	\newblock {\em Graphs Combin.}, 31(5):1325--1334, 2015.
	
	\bibitem{chen_2016}
	H.~Z.~Q. Chen, S.~Kitaev, and B.~Y. Sun.
	\newblock Word-representability of face subdivisions of triangular grid graphs.
	\newblock {\em Graphs Combin.}, 32(5):1749--1761, 2016.
	
	\bibitem{dissaux}
	T.~Dissaux, G.~Ducoffe, N.~Nisse, and S.~Nivelle.
	\newblock Treelength of series-parallel graphs.
	\newblock {\em Discrete Appl. Math.}, 341:16--30, 2023.
	
	\bibitem{Dennis_2000}
	D.~Eichhorn, D.~Mubayi, K.~O'Bryant, and D.~B. West.
	\newblock The edge-bandwidth of theta graphs.
	\newblock {\em J. Graph Theory}, 35(2):89--98, 2000.
	
	\bibitem{Felsner_2015}
	S.~Felsner, W.~T. Trotter, and V.~Wiechert.
	\newblock The dimension of posets with planar cover graphs.
	\newblock {\em Graphs Combin.}, 31(4):927--939, 2015.
	
	\bibitem{Gallai}
	T.~Gallai.
	\newblock Transitiv orientierbare {G}raphen.
	\newblock {\em Acta Math. Acad. Sci. Hungar.}, 18:25--66, 1967.
	
	\bibitem{Glen_2017}
	M.~Glen and S.~Kitaev.
	\newblock Word-representability of triangulations of rectangular polyomino with
	a single domino tile.
	\newblock {\em J. Combin. Math. Combin. Comput.}, 101:131--144, 2017.
	
	\bibitem{halldorsson11}
	M.~M. Halld\'{o}rsson, S.~Kitaev, and A.~Pyatkin.
	\newblock Alternation graphs.
	\newblock In {\em Graph-theoretic concepts in computer science}, volume 6986 of
	{\em Lecture Notes in Comput. Sci.}, pages 191--202. Springer, Heidelberg,
	2011.
	
	\bibitem{Joret_2017}
	G.~Joret, P.~Micek, and V.~Wiechert.
	\newblock Planar posets have dimension at most linear in their height.
	\newblock {\em SIAM J. Discrete Math.}, 31(4):2754--2790, 2017.
	
	\bibitem{kitaev13}
	S.~Kitaev.
	\newblock On graphs with representation number 3.
	\newblock {\em J. Autom. Lang. Comb.}, 18(2):97--112, 2013.
	
	\bibitem{kitaev15mono}
	S.~Kitaev and V.~Lozin.
	\newblock {\em Words and graphs}.
	\newblock Monographs in Theoretical Computer Science. An EATCS Series.
	Springer, Cham, 2015.
	
	\bibitem{kitaev08}
	S.~Kitaev and A.~Pyatkin.
	\newblock On representable graphs.
	\newblock {\em J. Autom. Lang. Comb.}, 13(1):45--54, 2008.
	
	\bibitem{kitaev_linegraphs}
	S.~Kitaev, P.~Salimov, C.~Severs, and H.~Ulfarsson.
	\newblock Word-representability of line graphs.
	\newblock {\em Open J. Discrete Math.}, 1(2):96--101, 2011.
	
	\bibitem{kitaev08order}
	S.~Kitaev and S.~Seif.
	\newblock Word problem of the {P}erkins semigroup via directed acyclic graphs.
	\newblock {\em Order}, 25(3):177--194, 2008.
	
	\bibitem{Vincent}
	V.~Limouzy.
	\newblock Seidel minor, permutation graphs and combinatorial properties.
	\newblock In O.~Cheong, K.-Y. Chwa, and K.~Park, editors, {\em Algorithms and
		Computation. ISAAC 2010. Lecture Notes in Computer Science, vol 6506}, pages
	194--205. Springer Berlin Heidelberg, 2010.
	
	\bibitem{Liu_zhang_2019}
	H.~Liu, R.~Zhang, and X.~Hu.
	\newblock Burning number of theta graphs.
	\newblock {\em Appl. Math. Comput.}, 361:246--257, 2019.
	
	\bibitem{Liu_2023}
	X.-C. Liu and X.~Yang.
	\newblock On the {T}ur\'an number of generalized theta graphs.
	\newblock {\em SIAM J. Discrete Math.}, 37(2):1237--1251, 2023.
	
	\bibitem{Loerinc_1978}
	B.~Loerinc.
	\newblock Chromatic uniqueness of the generalized {$\theta $}-graph.
	\newblock {\em Discrete Math.}, 23(3):313--316, 1978.
	
	\bibitem{KM_KVK_ric}
	K.~Mozhui and K.~V. Krishna.
	\newblock The representation number and the prn of stacked book graphs.
	\newblock In {\em Current Progress in Interdisciplinary Research, Select Papers
		of RIC 2024}, volume~3, pages 337--346. Springer Nature Singapore, 2025.
	
	\bibitem{Mozhui_Krishna_2025}
	K.~Mozhui and K.~V. Krishna.
	\newblock An upper bound for the permutation-representation number of bipartite
	graphs.
	\newblock {\em Journal of Information Processing}, 33:1033--1041, 2025.
	
	\bibitem{trotterbook}
	W.~T. Trotter.
	\newblock {\em Combinatorics and partially ordered sets: {D}imension theory}.
	\newblock Johns Hopkins Series in the Mathematical Sciences. Johns Hopkins
	University Press, Baltimore, MD, 1992.
	
	\bibitem{trotter_dim}
	W.~T. Trotter and J.~I. Moore.
	\newblock The dimension of planar posets.
	\newblock {\em J. Combinatorial Theory Ser. B}, 22(1):54--67, 1977.
	
	\bibitem{Trotter_2016}
	W.~T. Trotter and R.~Wang.
	\newblock Planar posets, dimension, breadth and the number of minimal elements.
	\newblock {\em Order}, 33(2):333--346, 2016.
	
	\bibitem{yanna82}
	M.~Yannakakis.
	\newblock The complexity of the partial order dimension problem.
	\newblock {\em SIAM J. Algebraic Discrete Methods}, 3(3):351--358, 1982.
	
	\bibitem{Zhai_2021}
	M.~Zhai, L.~Fang, and J.~Shu.
	\newblock On the {T}ur\'an number of theta graphs.
	\newblock {\em Graphs Combin.}, 37(6):2155--2165, 2021.
	
\end{thebibliography}

\appendix

\section{Planar Posets Corresponding to Melon Graphs}
\label{planar_poset_melon}

\begin{figure}[h!]
	\centering
	\begin{minipage}{.45\textwidth}
		\centering
		\begin{tikzpicture}[scale=0.4]
			
			% Vertices
			\vertex (z0)  at (-0.5,-1)  [label=below:$0'$] { };
			\vertex (a1)  at (1,1)   {};
			\vertex (a2)  at (2,-1)  {};
			\vertex (a3)  at (3,1)   {};
			\vertex (a4)  at (4,-1)   {};
			\vertex (a5) at (5,1) {};
			\vertex (z0p) at (6.5,-1)  [label=below:$0$] {};
			
			\vertex (b1)  at (1,2.5)   {};
			\vertex (b2)  at (2,0.5)  {};
			\vertex (b3)  at (3,2.5) [label=above:$\vdots$]  {};
			\vertex (b4)  at (4,0.5)   {};
			\vertex (b5) at (5,2.5) {};
			
			\vertex (c1)  at (1,4)   {};
			\vertex (c2)  at (2,2)  {};
			\vertex (c3)  at (3,4)   {};
			\vertex (c4)  at (4,2)   {};
			\vertex (c5) at (5,4) {};
			
			% Edges (zig-zag)
			\draw (z0) -- (a1) -- (a2) to node[midway,left] {$E_1$} (a3)--(a4)--(a5)--(z0p);
			\draw (z0) -- (b1) -- (b2)  to node[midway,left] {$E_2$} (b3)--(b4)--(b5)--(z0p);
			\draw (z0) -- (c1) -- (c2)  to node[midway,left] {$E_m$} (c3)--(c4)--(c5)--(z0p);
			
		\end{tikzpicture}
		\caption{Hasse diagram of a poset in Case-I.}
		\label{planar_poset_even}
	\end{minipage}\qquad
	\begin{minipage}{.45\textwidth}
		\centering
		\begin{tikzpicture}[scale=0.4]
			
			% Vertices
			\vertex (z0)  at (-0.3,-1)  [label=below:$0'$] { };
			\vertex (a1)  at (1,1)   {};
			\vertex (a2)  at (2,-1)  {};
			\vertex (a3)  at (3,1)   {};
			\vertex (a4)  at (4,-1)   {};
			
			\vertex (z0p) at (5.5,4)  [label=right:$0$] {};
			
			\vertex (b1)  at (1,2.5)   {};
			\vertex (b2)  at (2,0.5)  {};
			\vertex (b3)  at (3,2.5) [label=above:$\vdots$]  {};
			\vertex (b4)  at (4,0.5)   {};

			\vertex (c1)  at (1,4)   {};
			\vertex (c2)  at (2,2)  {};
			\vertex (c3)  at (3,4)   {};
			\vertex (c4)  at (4,2)   {};

			% Edges (zig-zag)
			\draw (z0) -- (a1) -- (a2)  to node[midway,left] {$E_1$} (a3)--(a4)-- (z0p);
			\draw (z0) -- (b1) -- (b2)  to node[midway,left] {$E_2$} (b3)--(b4) --(z0p);
			\draw (z0) -- (c1) -- (c2)  to node[midway,left] {$E_{m}$} (c3)--(c4) --(z0p);
			
		\end{tikzpicture}
		\caption{Hasse diagram of a poset in Case-II.}	
		\label{planar_poset_odd}	 
	\end{minipage}
	\begin{minipage}{.45\textwidth}
		\centering
		\begin{tikzpicture}[scale=0.4]
			
			% Vertices
			\vertex (z0)  at (-2,-1)  [label=below:$0'$] { };
			\vertex (a1)  at (1,0.5)   {};
			\vertex (a2)  at (2,-1)  {};
			\vertex (a3)  at (3,0.5)   {};
			\vertex (a4)  at (4,-1)   {};
			
			\vertex (z0p) at (5.3,6)  [label=above:$0$] {};
			
			\vertex (b1)  at (1,2)   {};
			\vertex (b2)  at (2,0.5)  {};
			\vertex (b3)  at (3,2) [label=above:$\vdots$]  {};
			\vertex (b4)  at (4,0.5)   {};

			\vertex (c1)  at (1,3.5)   {};
			\vertex (c2)  at (2,2)  {};
			\vertex (c3)  at (3,3.5)   {};
			\vertex (c4)  at (4,2)   {};

			% Edges (zig-zag)
			\draw (z0) -- (a1) -- (a2)  to node[midway,left] {$E_1$} (a3)--(a4)-- (z0p);
			\draw (z0) -- (b1) -- (b2)  to node[midway,left] {$E_2$} (b3)--(b4) --(z0p);
			\draw (z0) -- (c1) -- (c2)  to node[midway,below right=-15] {$E_{m-1}$} (c3)--(c4) --(z0p);
			\draw (z0) to [bend left] (z0p);
			\node (1) at (1, 4)  [label=above:$E_m$]  {};
			\draw ;
		\end{tikzpicture}
		\caption{Hasse diagram of a poset in Case-III.}
		\label{planar_poset_evenodd1}
	\end{minipage}\qquad
	\begin{minipage}{.45\textwidth}
		\centering
		\begin{tikzpicture}[scale=0.4]
			
			% Vertices
			\vertex (z0)  at (-2,-1)  [label=below:$0'$] { };
			\vertex (a1)  at (1,0.5)   {};
			\vertex (a2)  at (2,-1)  {};
			\vertex (a3)  at (3,0.5)   {};
			\vertex (a4)  at (4,-1)   {};
			
			\vertex (z0p) at (5.3,6)  [label=above:$0$] {};
			
			\vertex (b1)  at (1,2)   {};
			\vertex (b2)  at (2,0.5)  {};
			\vertex (b3)  at (3,2) [label=above:$\vdots$]  {};
			\vertex (b4)  at (4,0.5)   {};

			\vertex (c1)  at (1,3.5)   {};
			\vertex (c2)  at (2,2)  {};
			\vertex (c3)  at (3,3.5)   {};
			\vertex (c4)  at (4,2)   {};
			
			\vertex (d1)  at (-0.5,3.5)   {};
			\vertex (d2)  at (-1.5,3.5)  {};
			\vertex (d3)  at (-2.5,3.5)   {};

			% Edges (zig-zag)
			\draw (z0) -- (a1) -- (a2)  to node[midway,left] {$E_1$}  (a3)--(a4)-- (z0p);
			\draw (z0) -- (b1) -- (b2) to node[midway,left] {$E_2$} (b3)--(b4) --(z0p);
			\draw (z0) -- (c1) -- (c2) to node[midway, below right=-15] {$E_{t-1}$}(c3)--(c4) --(z0p);
			\draw (z0) to node[above right] {$E_{t}$} (d1) --(z0p);
			\draw (z0) to node[above ] {$\cdots$} (d2) --(z0p);
			\draw (z0) to node[above left] {$E_{m-1}$} (d3) --(z0p);
			\draw ;
		\end{tikzpicture}
		\caption{Hasse diagram of a poset in Case-IV.}
		\label{planar_poset_evenodd}		 
	\end{minipage}
\end{figure}

\begin{enumerate}
	\item 
	It is evident that the poset induced by a path is a planar poset. In fact, the vertices will be placed in two levels in its Hasse diagram. If a path is of even parity, then the end points will be in the same level and they will be in different levels for a path of odd parity.

	\item Extending this to melon graphs restricted to comparability graphs, it can be observed that the corresponding posets are planar posets. In view of Theorem \ref{chr_com_file}, for melon graphs, we provided a case wise depiction of the Hasse diagrams of the corresponding planar posets as per the following. 
	\begin{itemize}
		\item 
		Case-I: For the melon graphs in which all constituent paths are of even parity, see Fig. \ref{planar_poset_even}.  
		
		\item Case-II: For the melon graphs in which all constituent paths are of odd parity, see Fig. \ref{planar_poset_odd}. 
		
		\item Case-III: For the melon graphs in which the end points are adjacent and all constituent paths are of odd parity, see Fig. \ref{planar_poset_evenodd}. 
		
		\item Case-IV: For the melon graphs in which the end points are adjacent, see Fig. \ref{planar_poset_evenodd1}.  
	\end{itemize}
	
\end{enumerate}

\end{document}